\newcommand{\dsum}{\displaystyle\sum}
\newcommand{\comment}[1]{{}}
\newcommand{\calG}{\mathcal{G}}
\newcommand{\calI}{\mathcal{I}}
\newcommand{\calK}{\mathcal{K}}
\newcommand{\calS}{\mathcal{S}}
\newcommand{\calP}{\mathcal{P}}
\newcommand{\calQ}{\mathcal{Q}}
\newcommand{\calL}{\mathcal{L}}
\newcommand{\calC}{\mathcal{C}}
\newcommand{\tX}{\tilde{X}}
\newcommand{\tY}{\tilde{Y}}
\newcommand{\tr}{\mathrm{Tr}}
\newcommand{\vecc}{\mathrm{vec}} 
\newcommand{\norm}[1]{\| #1 \|}
\newcommand{\inner}[2]{\langle #1, #2 \rangle}
\newcommand{\C}{\mathbb{C}}
\newcommand{\interior}{\mathrm{int}}
\newcommand{\lxy}{{\rm L}(\mathcal{X}, \mathcal{Y})}
\newcommand{\mcx}{\mathcal{X}}
\newcommand{\mcy}{\mathcal{Y}}
\newcommand{\corr}{\mathrm{Corr}}
\newcommand{\la}{\langle}
\newcommand{\ra}{\rangle}
\newcommand{\p}{\partial}
\newcommand{\R}{\mathbb{R}}
\def\be{\begin{equation}}
\def\ee{\end{equation}}
\def\bi{\begin{itemize}}
\def\ei{\end{itemize}}
\newcommand{\Herm}{\mathcal{S}}
\newcommand{\herm}{\Herm}
\newcommand{\Pos}{\mathcal{S}_+}
 \newcommand{\pos}{\Pos}
\newcommand{\dnn}{\mathcal{DNN}}
\newcommand{\gram}{\mathrm{Gram}}
\newcommand{\CPSD}{\mathcal{CS}_+}
\newcommand{\CP}{\mathcal{CP}}
\newcommand{\DNN}{\mathcal{DNN}}
\newcommand{\NN}{\mathcal{N}}
\newcommand{\NS}{\mathcal{NS}}
\newcommand{\intr}{\mathrm{int}}
\newcommand{\qvalue}{\omega_\mathcal{Q}(\calG)}
\newcommand{\cvalue}{\omega_\mathcal{C}(\calG)}
\newcommand{\nsvalue}{\omega_\mathcal{NS}(\calG)}
\newcommand{\unresvalue}{\omega_{\calP}(\calG)}
\newcommand{\kvalue}{\omega(\mathcal{K},\calG)}
\newcommand{\asbt}{N}
\newcommand{\sfT}{{\sf T}}
\newcommand{\xikvalue}{\xi(\mathcal{K}, \calG)}
\newcommand{\pvaluecpsd}{\omega(\CPSD,\calG)}
\newcommand{\dvaluecpsd}{\xi(\CPSD, \calG)}
\newcommand{\pvalueclcpsd}{\omega({\rm cl}(\CPSD),\calG)}
\newcommand{\rmx}{{\rm X}}
\newcommand{\pabst}{p=(p(a,b|s,t))}
\newcommand{\paass}{p=(p(a,a'|s,s'))}
\newcommand{\xasbt}{X[(s,a),(t,b)]}
\newcommand{\xsa}{\{X^s_a\}_{s\in S, a\in A}}
\newcommand{\ytb}{\{Y^t_b\}_{t\in T, b\in B}}
\newcommand{\sxsa}{\{x^s_a\}_{s\in S, a\in A}}
\newcommand{\sytb}{\{y^t_b\}_{t\in T, b\in B}}
\newcommand{\nso}{\mathcal{NSO}}
\newcommand{\anea}{a\ne a'\in A}
\newcommand{\mcv}{\mathcal{V}}
\newcommand{\mcw}{\mathcal{W}}
\newcommand{\mch}{\mathcal{H}}
\newtheorem{theorem}{Theorem}[section]
\newtheorem{corollary}[theorem]{Corollary}
\newtheorem{definition}[theorem]{Definition}
\newtheorem{proposition}[theorem]{Proposition}
\newtheorem{lemma}[theorem]{Lemma}
\newtheorem{remark}[theorem]{Remark}
\newtheorem{result}{Result}
\begin{document}

\title{\bf Linear conic formulations for two-party  correlations and values of nonlocal games}
\author[1]{Jamie Sikora\thanks{cqtjwjs@nus.edu.sg}}
\author[1,2]{Antonios Varvitsiotis\thanks{AVarvitsiotis@ntu.edu.sg}} 
\affil[1]{Centre for Quantum Technologies, National University of Singapore, and 

MajuLab, CNRS-UNS-NUS-NTU International
Joint Research Unit, UMI 3654, Singapore 
}

\affil[2]{School of Physical and Mathematical Sciences, Nanyang Technological University, Singapore}
\date{August 30, 2016}
\maketitle

\vspace{-5mm} 

\begin{abstract}
In this work we  study the sets of two-party correlations generated from   a Bell scenario  involving  two spatially separated systems  with respect to  various physical models. We show that the sets of classical, quantum, no-signaling and unrestricted  correlations  can be expressed as projections of affine sections of appropriate convex cones. 
As a by-product,  we identify a  spectrahedral outer approximation  to the set of quantum correlations  which is contained in the first level  of the Navascu\'es, Pironio  and  Ac\'in (NPA) hierarchy and also  a sufficient condition for the set of quantum correlations to be closed.   Furthermore, by our conic  formulations,  the value of a nonlocal game over the sets of classical, quantum, no-signaling and unrestricted correlations can be cast  as a  linear conic program. This allows us to show that a  semidefinite programming upper bound to the classical value of a nonlocal game introduced by Feige and Lov\'asz is in fact an upper bound to the quantum value of the game and moreover, it is at least as strong as optimizing over the first level of the NPA hierarchy.  
Lastly, we show that deciding the existence of a perfect quantum (resp. classical) strategy  is equivalent to deciding the feasibility of a linear  conic program over the cone of completely positive semidefinite matrices (resp. completely positive matrices). By specializing the results to  synchronous nonlocal games, we recover the  conic formulations for various  quantum and classical  graph parameters that were recently derived  in the literature.
\end{abstract}  

\smallskip

\qquad
\textbf{Keywords.} Quantum correlations,  nonlocal games, completely positive semidefinite cone, conic programming, quantum graph parameters, {semidefinite programming relaxations.}
 
\newpage  
\section{Introduction} 
Consider two parties, Alice and Bob,   {who} individually perform  measurements on a shared physical system {without communicating}.  A problem  of fundamental importance with which we are primarily concerned in this work is to characterize the structure of  the sets of correlations that can arise    between Alice and Bob, with respect to various physical  models. 
 
In one of the most celebrated discoveries of modern physics John  Bell  showed that quantum mechanical systems can exhibit correlations  that cannot be reproduced  within the framework of classical physics  \cite{B64}. This  fact  has  received extensive   experimental verification, see  \cite{PhysRevLett.28.938,AGG82} for examples. 
In addition to {their theoretical significance, these  correlations} 
have been increasingly regarded  as a valuable resource for distributed tasks such as  unconditionally secure cryptography \cite{Ekert91}  and  randomness certification~\cite{Colbeck09} among others.

In order to tackle this problem, {we take}  
the viewpoint of linear  conic optimization. Specifically, we introduce the notion of conic correlations and  show that the sets of classical, quantum, no-signaling and {unrestricted} correlations  can be expressed as conic correlations over appropriate convex cones. Consequently,  conic correlations provide us with a unified framework where we can study the properties of many interesting families of correlations.
Furthermore, using our conic characterizations, we can express the classical, quantum, no-signaling and {unrestricted} values of a nonlocal game as linear conic programs. This allows one  to use  the  arsenal     of linear conic programming theory  in order to study how the various values of a nonlocal game  relate to each other and to better understand their properties. 

There exists a  significant body of work addressing these   
questions from a mathematical  optimization perspective. In the celebrated work \cite{NPA08} Navascu\'es, Pironio  and  Ac\'in constructed  a hierarchy of  
spectrahedral outer approximations to the 
set of quantum correlations.
Another  fundamental result is that the quantum value  
{of  an XOR nonlocal game}  is given by a semidefinite program \cite{TS87,Cleve04}.  Furthermore, the quantum value of a 
{unique nonlocal game} can be tightly  approximated using semidefinite programming~\cite{KRT}.  Lastly, mathematical optimization has also proven to be extremely useful  for (classical and quantum) parallel repetition results \cite{FL92,CSUU08,KRT,DS14,DSV14}. 
 
\paragraph{{\bf Two-party correlations.}} 
 
  Consider the following thought experiment: Two spatially separated parties, Alice and Bob, perform measurements  on some  shared  physical system.  Alice has a set of possible   measurements at her disposal, where each measurement is labeled by some element of a finite set $S$.  The set of possible outcomes of each of Alice's measurements is  labeled  by the elements of  some finite set $A$. Similarly,  Bob  has a set $T$  of possible   measurements at his  disposal each with  possible outcomes labeled by the elements of some finite set $B$. 
 Note that  we use the term ``measurement'' very loosely at this point as the details  depend on the underlying physical theory.  We refer to a thought experiment  as described above as  a {\em Bell scenario}. 
 
 At each run of the experiment   Alice and Bob  without communicating 
 choose measurements $s \in  S$ and $t\in T$ respectively which they use to  measure their individual systems.  Following the measurement they get $a\in A$  and $b\in B$ as outcomes. Since the  measurement process is   probabilistic, each time the experiment  is conducted Alice and Bob  might generate  different outcomes.  The Bell scenario  is completely described by the  joint conditional probability distribution  $\pabst_{a,b,s,t}$, where $p(a,b|s,t)$ denotes the  conditional probability  that upon performing measurements $s\in S$ and $t\in T$,  Alice and Bob  get  outcomes $a\in A$ and $b\in B$, respectively.  
 
{For any Bell scenario, 
the set  of all  
joint probability distributions, denoted by $\mathcal{P}$, consists of all vectors $ \pabst\in \R^{|A\times B \times S \times T|}$ that satisfy 
$p(a,b|s,t) \ge~0,$ for all $a,b,s,t$, and $\sum_{a \in A, b \in B}  p(a,b|s,t) =1,$ for all $s,t$. The elements of $\mathcal{P}$ are called {\em correlation vectors} or simply~\emph{correlations}.} 

{A question of fundamental theoretical  interest  is to describe  the  correlations that  can arise 
within a Bell scenario 
as described above  with respect to  various physical models. We now   briefly introduce the models and the corresponding sets of correlations that are relevant to this work.  For additional details the reader is referred to the extensive survey \cite{Brunner14} and  references therein.}

\paragraph{Classical correlations.} 
A {\em classical strategy} allows  Alice and Bob to determine their outputs by employing  both private and shared randomness. Formally, a classical strategy is given by:
 \bi
 \item[$(i)$]A  shared  random variable {$i$}  with domain 
$[n]$, each sample occurring with probability~$k_i$.
 \item[$(ii)$] For each $i\in [n]$ and $ s\in S$   a probability 
distribution $\{x_{a}^{s, i}: a\in A\}$.
\item[$(iii)$]  For each $i\in [n]$ and $t\in T$  a probability 
distribution  $\{y_{b}^{t, i}: b\in B\}$.
 \ei 
Given that the value  of the shared randomness is  $i \in [n]$, if   Alice chooses  measurement  $s \in S$  she determines her output $a\in A$ by sampling from  the distribution $\{x_{a}^{s, i}\}_{a\in A}$. 
Bob acts analogously and determines his output by sampling from the distribution $\{y_{b}^{t, i}\}_{b\in B}$. 
{ 
{Formally, a correlation $p\in \calP$ is called  {\em classical} if  there exist {\em nonnegative} scalars $\{k_i\}_i$, $ \{x_{a}^{s,i}\}_{a,s,i}$, $\{y_{b}^{t,i}\}_{b,t,i}$ satisfying $\sum_{i \in [n]} k_i = 1$, \   
{$\sum_{a \in A} x_{a}^{s,i} =\sum_{b \in B} y_{b}^{t,i} = 1$}, for all $s,t,i$} and   
\be \label{classicalcorrelations}
 p(a,b|s,t)=\sum_{i=1}^n k_ix_a^{s,i}y_b^{t,i}, \text{ for all } a,b,s,t.  
\ee
{We denote the set of classical correlations by $\mathcal{C}$. Note that in the literature, classical correlations are also referred to as ``local''  and denoted by~$\mathcal{L}$.}

The set of classical correlations  forms a convex polytope in $\R^{|A\times B\times S\times T|}$. Its vertices correspond  to deterministic strategies, i.e.,  correlations of the form  $p(a,b|s,t)=\delta_{a,\alpha(s)}\delta_{b,\beta(t)} $ {for some pair of functions  $\alpha: S \rightarrow~A$ and $\beta: T\rightarrow B$, where   $\delta_{i,j}$ denotes the  Kronecker delta function. } 
 
\paragraph{Quantum correlations.}
A {\em quantum strategy} for a Bell scenario allows Alice and Bob to determine their outputs  by performing measurements  on a shared {quantum state} {(the reader is referred to  Section~\ref{sec:qmechanics} for  background on quantum information and the context behind the mathematical formalism in the following discussion.)}  A correlation $p\in \calP$ is called {\em quantum}  if   there exists finite dimensional complex Euclidean spaces  $\mcx$ and $\mcy$, a unit vector $\psi \in \mcx\otimes \mcy$, Hermitian {positive semidefinite (psd)} operators $\{ X^s_a \}_{a \in A }$ satisfying $\sum_{a} X^s_a=\mathbb{I}_\mcx,$ for each $s \in S$ and Hermitian psd operators $\{ Y^t_b\}_{b \in B}$ satisfying $\sum_{b} Y^t_b=\mathbb{I}_\mcy$, for each $t \in T$, {where}
\be\label{eq:quantumcorrelations}
p(a,b|s,t) = \psi^* (X_a^s \otimes Y_b^t) \psi, \text{ for all } a,b,s,t.
\ee
We denote the set of quantum correlations by 
$\mathcal{Q}$.
 
The set of quantum correlations is a {non-polyhedral set 
whose  structure}  has been extensively studied  but is nevertheless not well understood (e.g. see~\cite{Brunner14}). In particular, it is not even known whether $\calQ$   
is 
closed. On the positive side, Navascu\'es, Pironio, and Ac\'in (NPA)  in  \cite{NPA08} identified a hierarchy of spectrahedral outer approximations to the set of quantum correlations. {Although the NPA  hierarchy converges, it is not known  whether it converges to the set of quantum correlations.}
 
\paragraph{No-signaling correlations.}
A correlation $p\in \calP $ is {\em no-signaling} if Alice's local marginal probabilities  are independent  of Bob's choice of measurement and, symmetrically, Bob's local marginal probabilities are independent   of Alice's choice of measurement. {Algebraically,    
{$p\in \calP$  is  {\em no-signaling} if it satisfies:}}
\be\label{ns1}
 \sum_{b\in B} p(a,b|s,t) = \sum_{b\in B} p(a,b|s,t'), \  \text{ for all } s\in S, t\ne t'\in T, \text{ and }
 \ee
\be\label{ns2}
 \sum_{a\in A} p(a,b|s,t) = \sum_{a\in A} p(a,b|s',t),  \ \text{ for all } t\in T, s\ne s'\in S. 
 \ee 
We denote the set of no-signaling correlations by $\NS$.}  

{The no-signaling conditions \eqref{ns1} and \eqref{ns2}  are}  a natural physical requirement since if  they are violated   at least one party  can receive information about the other party's input \emph{instantaneously}, contradicting the fact that  information cannot travel faster than the speed of light. 
   
\medskip  
  
It 
{is immediate  from  physical context} 
that every classical correlation is also quantum (cf. Theorem \ref{thm:ccorrelationvector}). Furthermore, it is easy to verify that every quantum correlation is no-signaling (cf. Theorem \ref{thm:corrktoNS}). On the other hand, it is well-known that there exist  quantum correlations that are not classical and no-signaling correlations that are not quantum.  
 In other words, we have that 
\be\label{cor:inclusions}
\mathcal{C} \subsetneq \calQ \subsetneq \NS \subsetneq \calP,
\ee
and in this paper we give (alternative) algebraic proofs of these containments. 

\paragraph{{\bf Two-player one-round nonlocal games.}}
\label{sec:2player1round}

As we  mentioned, the set of quantum correlations is a strict superset of the set of   classical correlations. 
 How can we  identify quantum correlations that are not classical? One approach    is  via the framework of nonlocal games which we now~introduce. 
 
A  nonlocal game is a thought experiment between two spatially separated parties, Alice and Bob, who can only communicate with a third party, a referee, who  decides whether they win or lose.  Formally, a  (two-player one-round) {\em nonlocal game}   is specified by four finite sets $A,B,S,T$, a  probability distribution $\pi$ on $S\times T$ and a Boolean predicate $V: A\times B\times S\times T \rightarrow  \{0,1\}$. We denote {the nonlocal game}  by $\calG(\pi, V)$ or simply $\calG$ when there is no need to specify  $\pi$ and~$V$.  
 
The {nonlocal} game $\calG(\pi, V)$ proceeds as follows: The referee using the distribution $\pi$ samples a pair of questions $(s,t) \in S\times T$ and sends $s$ to Alice and $t$ to Bob. After receiving their questions, Alice and Bob use some  strategy to determine their  answers $a\in A$ and $b\in B$ which they send back to the referee.   The players {\em win}  the game if $V(a,b|s,t)=1$ and they {\em lose} otherwise.  
 
The objective of the  players is to maximize their probability of winning the game.
To do this the  players are not allowed to communicate after they  receive their questions  but they can  agree on some common strategy before the start  of the game using their knowledge of $V$ and $\pi$.

Fix a particular strategy for the game that gives rise to the correlation $\pabst\in \calP$. 
The probability that Alice and Bob win the game  using this strategy is given by 
\[ 
\sum_{s \in S} \sum_{t \in T} \pi(s,t) \sum_{a \in A} \sum_{b \in B} V(a,b|s,t) p(a,b|s,t). 
\] 
For  a fixed  set of correlations  $\calS \subseteq \calP$ we denote by $\omega_\calS(\calG)$ the maximum probability  Alice and Bob can win the game  $\calG$ when they use strategies that generate correlations that lie  in $\calS$. Formally:
\be\label{Svalue}
\omega_\calS(\calG) := \sup \left\{ 
\sum_{s \in S} \sum_{t \in T} \pi(s,t) \sum_{a \in A} \sum_{b \in B} V(a,b|s,t) p(a,b|s,t) : p \in \calS 
\right\}.  
\ee

In this paper we restrict our attention to \textit{(i)} the \emph{classical value} denoted $\omega_{\mathcal{C}}(\calG)$, \textit{(ii)} the \emph{quantum value}  denoted $\omega_{\calQ}(\calG)$, \textit{(iii)}  the \emph{no-signaling value} denoted $\omega_{\NS}(\calG)$ and \textit{(iv)} the \emph{unrestricted value} denoted  $\omega_{\calP}(\calG)$. 
As an immediate consequence of the set inclusions given in Equation~\eqref{cor:inclusions} we have  
\[ 
\omega_{\mathcal{C}}(\calG) 
\leq 
\omega_{\mathcal{Q}}(\calG) 
\leq 
\omega_{\mathcal{NS}}(\calG) 
\leq 
\omega_{\mathcal{P}}(\calG),
\] 
for any nonlocal game $\calG$.

{As a concrete example of the above definitions we now describe the CHSH game~\cite{CHSH69}.   
This game has $A=B=S=T=\{0,1\}$, $\pi$ is uniform, and $V(a,b|s,t)=1$ if and only if $a\oplus b=s \cdot  t$, where $\oplus$ denotes  addition modulo~2. Informally, the referee sends a random bit $s$ to Alice and an independently random bit $t$ to Bob. The players respond with single bits $a$ and $b$, respectively. Alice and Bob win if $V(a, b|s, t) = 1$, i.e., if $a \oplus b$ is equal to the logical AND of their questions.  
It is well-known  {that the no-signaling value of the CHSH game  is $1$, the quantum value is $\cos^2(\pi/8)\approx 0.85$ and the classical value is~$3/4$}.} 
  
\paragraph{{\bf Convex cones of interest.}}
 
Consider a vector space $\mcv$ endowed with an inner product $\la \cdot, \cdot\ra$. The {\em Gram matrix} of {the} vectors $\{x_i\}_{i=1}^n\subseteq \mcv$, denoted by 
\begin{equation} 
{\gram(\{x_i\}_{i=1}^n),} 
\label{eqn:gram} 
\end{equation} 
is the  $n\times n$ matrix whose $(i,j)$ entry is given by $\la x_i,x_j\ra$.  
{We say  that the vectors $\{x_i\}_{i=1}^n$ form a {\em Gram representation} of $X=\gram(\{x_i\}_{i=1}^n) $.} 

We denote by $\mathcal{S}^n$ the set of $n \times n$ real symmetric matrices which we equip with the Hilbert-Schmidt inner product $\la X,Y\ra:=\tr(XY)$. 
A matrix $X\in \calS^n$ is called {\em positive semidefinite} (psd)  if $X=\gram(\{x_i\}_{i=1}^n)$ for some family of  real  vectors  $\{x_i\}_{i=1}^n\subseteq \R^d$ (for some $d\ge 1$). {Equivalently, a matrix is psd if and only if its eigenvalues are nonnegative.}  {A nonsingular psd matrix}  is   called {\em positive definite}. We denote by $\calS^n_+$  (resp. $\calS^n_{++}$)  the set of $n\times n$ psd matrices (resp. positive definite matrices). 
The set $\calS^n_+$ forms a closed, convex, self-dual cone  whose structure is well understood (e.g. see \cite{Barvinok} and references therein).  Linear optimization over $\calS^n_+$ is called {\em semidefinite programming} (SDP) and  its {optimal} value can be approximated within arbitrary precision in polynomial time  using the ellipsoid method, under reasonable assumptions (e.g. see  \cite{BTN13}). 

{The {\em nonnegative cone}, denoted by $\NN^n$, consists  of the $n\times n$ entrywise nonnegative matrices  in  $\calS^n$. It is easy to verify  that $\NN^n$ is  a self-dual cone.}

{A matrix is called {\em doubly nonnegative} if its psd and entrywise nonnegative. We denote by $\DNN^n$ the set of   $n\times n$ doubly nonnegative  matrices,~i.e., 
\be\label{nonegative}
\DNN^n := \left\{ X \in \herm_+^n : X_{i,j} \geq 0,  \ \textup{ for all }\  1\le i,j\le n \right\}, 
\ee
which is  known  to form  a full-dimensional  closed convex cone.}
  
{A matrix  $X\in \calS^n$ is called {\em completely positive} if}
\be\label{eq:cpcone} 
{X=\gram(\{x_i\}_{i=1}^n), \text{ where } \{x_i\}_{i=1}^n\subseteq  \R^d_+ \  (\text{for some }  d\ge 1).}
\ee 
{The set of   $n\times n$ completely positive matrices forms a full-dimensional  closed convex cone known as the {\em completely positive cone},  and is    denoted by  $\CP^n$.} The  structure of the $\CP$ cone  has been extensively studied, e.g.  see \cite{BSM03}. Optimization over $\CP$ is intractable since there exist NP-hard combinatorial optimization problems that  can be formulated  as linear optimization problems over $\CP$ \cite{KP02} (see also Section~\ref{sect:perfect}). On the positive side, there exist SDP hierarchies that can be used to  approximate $\CP$ from  the interior \cite{Las12} and from the exterior \cite{Par00}.     
  
Thinking of  nonnegative vectors as diagonal psd  matrices suggests  a natural generalization of the completely positive cone. A matrix $X\in \calS^n$ is called {\em completely positive semidefinite} (cpsd) 
if 
\be
{X=\gram(\{X_i\}_{i=1}^n), \text{ where }\{X_i\}_{i=1}^n\subseteq  \pos^d \ (\text{for some } d\ge 1).}
\ee  
 The set of $n \times n$ {cpsd matrices} 
 forms a full-dimensional convex cone  denoted  by $\CPSD^n$. The $\CPSD$ cone   was introduced recently   as a tool to provide conic programming formulations for the quantum chromatic number of a  graph \cite{LP14} and quantum graph homomorphisms \cite{R14b} (cf.  Section~\ref{sec:perfectcorrelated}).    Nevertheless,  
 its structure appears  to be very   complicated. In particular it is not known whether $\CPSD$ forms  a closed set~\cite{BLP15}. Furthermore, given a matrix $X\in \CPSD^n$,  no upper bound is known on the size of the psd matrices in a Gram representation  for~$X$.  {This is in  contrast to the completely positive cone, where we can always find a Gram representation satisfying \eqref{eq:cpcone} using nonnegative vectors whose dimension is at most quadratic in the size of the matrix.} 
   Lastly,   combining 
 results from \cite{LP14} and~\cite{Ji13} it follows that linear optimization over  $\CPSD$ is NP-hard.
 
 {It  is immediate 
 from the definitions given above that for all  $n\ge 1$~{we have}}
 \be\label{coneinclusions}
  \CP^n \subseteq \CPSD^n \subseteq \DNN^n.
 \ee
For  $n\le 4$ it is known that   $\CP^n= \DNN^n$ \cite{MM61}.  
On the other hand, for $n\ge 5$ all the inclusions   in~\eqref{coneinclusions} are known to be strict \cite{LP14}.  
 
As the mathematical formulation of quantum mechanics  is stated in terms of psd  matrices with complex entries, in some parts of this work we consider  matrices with complex entries. {We denote by $\mch^n$ the set of $n\times n$ Hermitian matrices.  A matrix $X\in \mch^n$ is called {\em Hermitian positive semidefinite} if $z^*Xz\ge~0$, for all $z\in \C^n$.
We denote by  $\mch^n_{+}$ (resp. $\mch^n_{++}$)  the  set of $n\times n $ Hermitian positive semidefinite  matrices  (resp. $n \times n$ Hermitian positive definite matrices).}
Occasionally, we also use the notation $\mch_+(\mcx)$ to denote the positive operators acting on a finite dimensional complex  Euclidean space $\mcx$. 
For a matrix $X\in \mch^n$ we  write $X=\mathcal{R}(X)+i\mathcal{I}(X),$ where {$\mathcal{R}(X)$} is the real part and $\mathcal{I}(X)$ is the  imaginary part of $X$.  If $X$ is Hermitian we get that $\mathcal{R}(X)$ is real symmetric and $\mathcal{I}(X)$ is real skew-symmetric.  Moreover,   for $X,Y\in \mch^n$ we have   $\la X,Y\ra=\tr\big(\mathcal{R}(X)\mathcal{R}(Y)-\mathcal{I}(X)\mathcal{I}(Y)\big)$. 
For a  matrix $X\in \C^{n\times n}$,~set 
 \be\label{eq:comtorealpsd}
T(X) := \dfrac{1}{\sqrt 2} \begin{pmatrix} \mathcal{R}(X) & -\mathcal{I}(X)\\ \mathcal{I}(X)&  \mathcal{R}(X)\end{pmatrix},
\ee
and notice that $T$ is a bijection   between complex $n\times n$ matrices  and  real $ 2n\times 2n$ matrices. More importantly,  we have that  $X\in \mch^n_+$  if and only if $T(X)\in \mathcal{S}^{2n}_+$  and moreover  $\inner{X}{Y} = \inner{T(X)}{T(Y)}$ for all $X,Y\in \mch^n_+$. This shows that the set of  cpsd matrices does not change if we allow the psd matrices in the Gram decompositions to be Hermitian psd instead of just real {psd}.  
 
Lastly, a symmetric matrix $X \in \NN^N$ (where {$N:= |S||A| + |T||B|$}) is called {\em no-signaling}, {denoted by $\nso$}, if it satisfies 
$$\sum_{a \in A} \xasbt =  
\sum_{a \in A} X[(s',a),(t,b)], \ \forall  b \in B, t \in T, s\ne s' \in S, \text{ and }$$
$$\sum_{b \in B} \xasbt =  
\sum_{b \in B} X[(s,a),(t',b)], \ \forall  a \in A, s \in S, t\ne t' \in T.$$ 
 
\paragraph{{\bf {Contributions.}}}
 
Consider a Bell scenario  with question sets $S,T$ and answer sets $A,B$  
{and set $N:= |S||A| + |T||B|$.} 
In this work we  mostly consider   symmetric  $N\times N$ matrices. The rows and columns of such a matrix  are each indexed by $S \times A$ and $T \times B$ and it is useful  to think of $X$     as being partitioned into blocks $X_{i,j}$, where  each block   is indexed by a  pair  of questions $i,j \in S\cup T$. The size  of each  block is  $(i)$ $|A|\times |A| $ if  $i,j\in S$, $(ii)$ $|A|\times |B|$ if $i\in S,j\in T$, $(iii)$ $|B|\times |A|$ if $i\in T,j\in S$ and $(iv)$ $|B|\times |B|$ if~{$i,j\in T$}. 

{For $i,j\in S\cup T$ we define $J_{i,j}\in \calS^N$ to be the matrix which  acts  on  a matrix $X\in \calS^N$ by summing all entries in block~$X_{i,j}$, i.e.,
 \be\label{eq:jmatrix}
 \la J_{i,j},X\ra=\sum_{k,l} X[(i,k),(j,l)].
 \ee}
{At times, we also consider symmetric $(N+1)\times (N+1)$  matrices, {which have an extra row and column indexed by ``$0$''.}
We also extend the operator $J_{i,j}$ {defined in} \eqref{eq:jmatrix} to act on $\calS^{N+1},$ where we define 
 \be\label{eq:jmatrix2}
 \la J_{0,i},X\ra=\sum_k X[0,(i,k)], \text{ for all } i\in S\cup T {\text{ and }  \la J_{0,0},X\ra = X[0,0].}
 \ee
{Lastly, in the final  part of this work we also consider matrices in  $\calS^{|S\times A|}$. In this case, for any $s,s'\in S$  we  denote by $J_{s,s'}$  the operator that acts on $X\in \calS^{|S\times A|}$ by summing the entries in block $X_{s,s'}$.}
 
For brevity, we do   not specify in the notation whether the operator  $J_{i,j}$ acts on $\calS^{N}, \calS^{1+N}$ or $\calS^{|S\times A|}$ as this  {is always clear from context}.}
   
\medskip 

\noindent{\em Correlation sets.}
In Section \ref{sec:correlations} we express  the sets of classical, quantum, no-signaling and unrestricted correlations as projections of affine slices of appropriate convex cones.
To achieve this {we use the following definition}.  
  
\begin{definition}\label{def:kcorr}
For a convex cone  $\calK \subseteq \NN^N$ the set of $\calK$-{\em correlations}, {denoted $\corr(\calK)$}, is defined as 
the set  of vectors $ \pabst\in \R^{|A\times B \times S \times T|}$  for which there exists a matrix   $X\in \calK $ satisfying: 
\begin{equation}\label{kcorrelations}
\begin{aligned}
 \inner{J_{i,j}}{X} &  = 1,\  \text{for  all } i,j \in S\cup T, \text{ and } \\
 \xasbt & = p(a,b|s,t),\   \text{for all } a,b,s,t. 
\end{aligned}
\end{equation} 
\end{definition} 
 
 In Theorem \ref{thm:correlations} we show there  exist appropriate choices of convex cones $\calK$ for which the sets of $\calK$-correlations capture the sets of classical, quantum, no-signaling and unrestricted correlations. Specifically: 

\begin{result} \label{res:correlations}
Consider an arbitrary vector  $\pabst\in \R^{|A\times B \times S \times T|}$. Then,
\begin{itemize}
\item[(i)]  $p$ is a classical  correlation (i.e., $p \in \calC$) if and only if $p\in \corr(\CP)$. 
\item[(ii)]  $p$ is quantum correlation (i.e., $p \in \calQ$) if and only if $p\in \corr(\CPSD)$.
\item[(iii)]  $p$ is a no-signaling correlation (i.e., $p \in \NS$)  if and only if $p\in \corr(\nso)$.
\item[(iv)]  $p$ is a correlation (i.e., $p \in \calP$)  if and only if $p\in \corr(\NN)$.
\end{itemize}
\end{result}
 
{We note that} upon completion of this work we found   that a  result similar to Result~\ref{res:correlations} \textit{(ii)} has been derived  independently  in the unpublished note  \cite{MR14b}.

{The use of {convex} cones to characterize the sets of quantum, classical, and no-signaling correlations was also  an essential ingredient
 in~\cite{Fr12b}.}  
   
As {suggested} by  Result \ref{res:correlations}  
 the  notion of conic correlations   provides    a general  framework allowing us  to  phrase 
and study the properties of many interesting sets  of correlations.  Notice that  whenever $\calK_1\subseteq \calK_2$ we have that $\corr(\calK_1)\subseteq \corr(\calK_2)$. Consequently,  {the inclusions from} \eqref{coneinclusions} combined with  Result~\ref{res:correlations} {imply} that  $\mathcal{C} \subseteq \calQ \subseteq \calP.$
As already mentioned in the introduction  these inclusions are well-known  but the notion of  conic correlations allows us to recover them  within a purely mathematical~framework.  
 
{Note  that when $\calK$ is a    {\em closed} convex cone the set of  $\calK$-correlations is also closed. Furthermore, recall that  the set of quantum correlations is not known to be closed. As an immediate consequence of Result~\ref{res:correlations}~$\textit{(ii)}$ it follows that if the $\CPSD$ cone is closed then 
the set of quantum correlations is also closed (cf. Proposition \ref{cor:closedness}).  
The  same  observation was  made  independently in the unpublished note~\cite{MR14b}.}
To the best of our knowledge, the first work where  the structure of the closure of the set of quantum correlations was studied is~\cite{Fr12}.
     
{In Theorem \ref{thm:corrktoNS} we show that   for any  $\calK\subseteq \DNN$ we have   $\corr(\calK)\subseteq~\NS$. This fact combined with Result \ref{res:correlations} and   the inclusion  $\CPSD \subseteq \DNN $ implies  that} 
\be\label{vddfgergetrg}
 \calQ\subseteq \corr(\DNN)\subseteq \NS,
\ee i.e., $\corr(\DNN)$ forms a  spectrahedral outer approximation for the set of quantum correlations  which is contained in  the set of no-signaling correlations.  {In Theorem~\ref{thm:corrnpa} we compare $\corr(\DNN)$ with the first level of the NPA hierarchy, denoted  by ${\rm NPA}^{(1)}$.} We are able to show the following:

\begin{result}\label{NPA}
For any Bell scenario we have that $\corr(\DNN)\subseteq {\rm NPA}^{(1)}$.
\end{result}

\noindent{\em Game values.}
In Section \ref{sec:conicprogrammingformulations} we study the value of a nonlocal game when   the players  use strategies that generate classical, quantum, no-signaling or unrestricted correlations. 
To state our results in a succinct manner  we introduce some notation that  is used  throughout the paper. 
The {\em cost matrix} of a game $\calG(\pi,V)$ is  the {$|S \times A|$ by $|T \times B|$}  matrix  $C$ whose entries are given~by 
\be\label{hatc}
C[(s,a),(t,b)] := \pi(s,t) V(a,b|s,t), \  \text{ for all } a\in A,b\in B,s\in S,t\in T.
\ee  
The {\em symmetric cost matrix} of the game  $\calG$ is the $\asbt\times N$  matrix  
\be\label{symmetriccostmatrix}
\hat{C} :=    
\dfrac{1}{2}
\begin{pmatrix} 
0 & C \\
C^\sfT & 0 
\end{pmatrix}.
\ee
For a convex cone $\calK\subseteq \NN^N$  we {denote by} $ \omega(\calK, \calG)$ 
 the maximum  success probability of winning      $\calG$ when the players  use strategies that generate  $\calK$-correlations, i.e., 
\be\label{kconic}\tag{$\mathcal{P}_{\calK}$} 
  \omega(\calK, \calG) := \text{sup}\{ \inner{\hat{C}}{X}:  \la J_{i,j},X\ra=1, \text{ for all }\  i,j\in S\cup T,
\ X \in  \calK\}.
\ee  
 
Note  that \eqref{generalconic} is an instance of a linear conic program over the convex cone $\calK$. As an immediate consequence  of Result \ref{res:correlations}, the classical, quantum, no-signaling and unrestricted {values} of a nonlocal game can all be expressed as linear conic programs over appropriate convex cones.  
   
Having established conic formulations for $\cvalue$ and  $\qvalue$   we also    study  the corresponding  dual conic  programs and their properties {in Section~\ref{sec:conicprogrammingformulations}}.  
Furthermore, we use  our  formulations  to  compare the various values of a nonlocal game. For this, let  ${\rm SDP}^{(1)}(\calG)$ denote    the value of the  SDP obtained by optimizing over ${\rm NPA}^{(1)}$, i.e., the first level of the NPA hierarchy.  In Proposition~\ref{sdpupperbound} we~show: 
\begin{result}For any game $\calG$ we have $\qvalue \le \omega(\DNN,\calG) \le {\rm SDP}^{(1)}(\calG).$
\end{result}
 
Interestingly, $\omega(\DNN,\calG)$ was already introduced by Feige and Lov\'asz as an SDP upper bound to the classical value 
of a nonlocal game
\cite{FL92}. 

{In a very recent and independent work, a similar observation was also made by creating a new SDP hierarchy approximating 
the quantum value of a nonlocal game~\cite{BFS15}. The first level of that  hierarchy corresponds to $\omega(\DNN,\calG)$.}

Lastly,  we use our conic formulations  
to study  the problem of deciding the existence of a strategy that wins a nonlocal game with certainty.
\begin{definition}\label{perfectkstrategy}
Consider a nonlocal game $\calG(\pi, V)$ and a convex cone  $\calK \subseteq~\NN$. {We say  that $\calG$ admits  a {\em perfect $\calK$-strategy} if $\kvalue=1$ and moreover, this value is achieved by some correlation in $\corr(\calK)$.} 
\end{definition}
 
 We show that deciding the existence of a perfect $\calK$-strategy is equivalent to the feasibility of a linear conic program over $\calK$. This fact combined with   Result \ref{res:correlations} implies  that deciding the existence  of a perfect classical, quantum, no-signaling and unrestricted strategy is equivalent to  the feasibility of a conic program over the cones $\CP, \CPSD,\nso$ and $\NN$, respectively (cf. {Corollary}~\ref{conicformulation_perfectstrategies}).

 It is well-known  that deciding the existence of a perfect classical strategy  is $ {\rm NP}$-hard (see also  Section \ref{sec:perfectcorrelated}).
 Furthermore, it was recently shown that deciding the existence of a perfect quantum strategy is also ${\rm NP}$-hard  \cite{Ji13}. Nevertheless, this problem is currently not known to be decidable. Note that our reformulation as a conic feasibility program does not render the problem decidable as no algorithms are  known for determining  the feasibility of a $\CPSD$-~program.
  
 \medskip  
In Section \ref{sect:corr} we restrict to  Bell scenarios where  $S=T$ and $A=B$.   We first specialize our conic characterizations from Section \ref{sec:correlations} to  {\em synchronous} correlations, i.e.,  correlations with the property that whenever the players receive the same question they need to respond with the same answer. 
  
  Recently there has been  interest in the study of  synchronous correlations as they {correspond to} perfect strategies  for   graph homomorphism {games} and more generally,  synchronous nonlocal  games  (e.g. \cite{Paulsen14,MRV14,DP15}).
{Another characterization of the set of  synchronous quantum  correlations   in terms of the existence of a  $C^*$-algebra with certain properties was given in \cite{Paulsen14}. Furthermore, it was  shown in \cite{DP15} that Connes' embedding conjecture is  equivalent to showing that two families of sets
 of quantum synchronous correlations coincide.}
   
  Furthermore, in Section \ref{sect:corr}  we study {\em synchronous} nonlocal games, i.e., games  where    both  players share the same question and answer sets and in order to win, whenever they    receive the same question they have to respond with the same answer   (e.g. \cite{Cameron07,LP14,R14b,ML15}).  
 {The notion of synchronous games was implicit in \cite{Paulsen14} and was formally defined in \cite{MRV14} and \cite{DP15}. 
 We 
 focus on  the problem of deciding whether a synchronous game admits a perfect classical or quantum strategy.
 Synchronous games have the property that  perfect strategies   generate synchronous  correlations 
 (e.g. {see} \cite{MRV14})}.  
 In Theorem \ref{thm:conicformperfectstrategies} we show  that this problem is equivalent to the feasibility 
of a conic program with matrix variables of size~$|S\times A|$. 
Specializing   this 
 to graph homomorphism and graph coloring games we recover in a  uniform manner the conic formulations for quantum graph homomorphisms, the quantum chromatic number and the quantum independence number that were recently derived in the literature~\cite{LP14,R14b}. 
 
\paragraph{{\bf Paper organization.}} 

{In Section~\ref{sect:prelims} we introduce the notation and background on linear algebra, quantum mechanics, and linear conic programming needed for this work. In Section~\ref{sec:correlations} we discuss how correlations corresponding to various physical models can be represented as projections of affine slices of appropriate convex cones and identify  a spectrahedral outer approximation for the set of quantum correlations. In Section~\ref{sec:conicprogrammingformulations} we show that  values of nonlocal games can be formulated as conic programming problems and we further discuss the Feige-Lov\'asz SDP relaxation for the value of a nonlocal game. Additionally, we show that deciding the existence of a perfect strategy is equivalent to a conic feasibility problem. 
Finally, in Section~\ref{sect:corr} we specialize our characterizations to  synchronous correlations and synchronous game values. 
  
\section{Notation and background}  
\label{sect:prelims}
 
\paragraph{{\bf Linear algebra.}} 

A  {\em finite dimensional complex Euclidean space} refers   to the vector space~$\mathbb{C}^n$  (for some $n\ge 1$) equipped with the canonical inner product on $\mathbb{C}^n$.
We denote by $\{e_i\}_{i=1}^n$ the standard orthonormal basis of~$\C^n$, {and by $e$ the vector of {all $1$'s} of appropriate dimension}.
Given two   complex Euclidean spaces  $\mcx, \mcy$ we  denote by $\lxy$ the space of linear operators from $\mcx$ to $\mcy$ which  we endow  with the Hilbert-Schmidt inner product  
$\la X,Y\ra:=\tr(X^*Y)$ for $X,Y\in \lxy$. {For an operator $X\in \lxy$ we denote its {\em adjoint} operator by $X^\ast\in \mathcal{L}(\mcy,\mcx)$ and its {\em transpose} by $X^\sfT\in \mathcal{L}(\mcy,\mcx)$.}
We use  the  correspondence between $\lxy$ and $\mcy\otimes \mcx$ given by the  map
$\vecc: \lxy \rightarrow  \mcy\otimes \mcx,$ 
which is given by 
$\vecc(e_ie_j^*)=e_i\otimes e_j, $
on basis vectors and is extended linearly. The $\vecc(\cdot)$ map  is a linear  bijection  between $\lxy$ and $ \mcy\otimes \mcx$ and furthermore it is  an   isometry, i.e., $\inner{X}{Y} = \inner{\vecc(Y)}{\vecc(X)}$ for all $X,Y\in \lxy$. We make repeated use of the fact \begin{equation} \label{vecprop}
 \vecc(W)^*(X \otimes Y) \vecc(Z)= \vecc(W)^*\vecc(XZY^\sfT)=
\la W,XZY^\sfT\ra, 
\end{equation} 
 for Hermitian operators $X,Y,Z,W$  of the appropriate size (e.g. see \cite{Watrous}). 
  
Any  vector $\psi\in \mcy\otimes \mcx$ can be {uniquely} expressed as ${\psi=\sum_{i=1}^d\lambda_i \, y_i\otimes x_i} $ for 
some integer $d\ge 1$, positive  scalars $\{ \lambda_i\}_{i=1}^d$, and orthonormal sets ${\{ y_i\}_{i=1}^d\subseteq \mcy}$ and $\{ x_i\}_{i=1}^d\subseteq~\mcx$. An expression of this form is known as a {\em Schmidt decomposition} for $\psi$ and is derived by the singular value  decomposition of $\vecc^{-1}(\psi)$.
The scalars $\{\lambda_i\}_{i=1}^d$  and the integer $d$ are uniquely  defined and  are called the {\em Schmidt coefficients} and the {\em Schmidt rank} of $\psi$, respectively. {{Suppose} ${\psi=\sum_{i=1}^d\lambda_i \, y_i\otimes x_i} $ is a Schmidt decomposition for $\psi$, then {we have that} $\|\psi \|^2_2 =~\sum_{i=1}^d \lambda_i^2$. {Lastly, given $x\in \mathbb{C}^n$ we define ${\rm Diag}{(x):=\sum_{i=1}^n x_i \, e_i e_i^*}$.} 
 
\paragraph{{\bf Quantum mechanics.}} 
\label{sec:qmechanics}
  
In this section, we give a brief overview of the mathematical formulation of  quantum mechanics.  The reader is referred to~\cite{NC00} and \cite{Watrous}  for a more thorough introduction.

According to the axioms of quantum mechanics, associated  to any physical system $\rmx$   is a finite dimensional complex  Euclidean space $\mcx$. The {\em state space} of  $\rmx$ is identified  with the set of unit vectors in $\mcx$. A {\em measurement} on  a   system $\rmx$ is specified  by  a family of Hermitian psd  operators $\{ X_i : i \in \calI\}\subseteq \mch_+(\mcx)$ with the property that $\sum_{i\in \calI}X_i=\mathbb{I}_\mcx$. The set  $\calI$  labels  the set of possible outcomes of the measurement. According to the axioms of quantum mechanics, when the  measurement  $\{ X_i : i \in \calI\}$ is performed on  a system $\rmx$  which is  in state $\psi \in \mcx$ the  outcome $i\in \calI$ occurs  with probability $p(i)=\psi^*X_i \psi.$ Notice that $\{ p(i): i\in \calI\}$ forms  a valid probability distribution since  by the definition of a measurement we have that $p(i)\ge 0$ for all $i\in \calI$ and~$\sum_{i\in \calI}p(i)=1$. 

Consider two quantum systems $\rmx$ and ${\rm Y}$ with corresponding state  spaces $\mcx$ and $\mcy$ respectively. According to the axioms of quantum mechanics the Euclidean  space that corresponds to the {\em joint} system $(\rmx,{\rm Y})$ is given by  the tensor product $\mcx\otimes \mcy$ of the individual spaces. Furthermore, if the systems $\rmx$ and $\rm{Y}$ are independently prepared in states $\psi_1\in \mcx$   and $\psi_2\in \mcy$ then the state of the joint  system is given by   $\psi_1\otimes \psi_2\in \mcx\otimes \mcy$. {A state in $\mcx\otimes \mcy$  of the form  $\psi_1\otimes \psi_2$ for some $\psi_1\in \mcx$, $\psi_2\in \mcy$ is called a {\em product state}. Quantum states that  {cannot be written as  convex  combinations of product states} are    called {\em entangled}.} Lastly, any two measurements  $\{ X_i: i \in \calI\}\subseteq \mch_+(\mcx)$ and $\{ Y_j: j\in \mathcal{J}\}\subseteq \mch_+(\mcy)$ on the individual systems $\rmx$ and  $\rm{Y}$ define a {\em product measurement} on the joint system with outcomes $\{(i,j): i\in \calI, j\in \mathcal{J}\}$. The corresponding measurement operators are given by $\{ X_i \otimes Y_j: i\in \calI, j\in \mathcal{J}\}\subseteq\mch_+(\mcx\otimes \mcy)$ and the probability of  outcome $(i,j)\in \calI\times \mathcal{J}$, {when measuring the quantum state $\psi$}, is equal to~$\psi^*(X_i\otimes Y_j)\psi$. 
 
\paragraph{{\bf Convex analysis and linear conic programming.}} \label{sscn:conic}
 
In this section, we  introduce   conic  programming and state   the duality results that are relevant to this work.   For additional  details, the reader is referred to 
\cite{BTN13}.

Let $\mcv$  be a  finite dimensional vector space  equipped with inner product $\la \cdot, \cdot \ra$. Given a subset $A\subseteq \mcv$ we denote by  ${\rm cl}(A)$ the  {\em closure} of $A$ and by ${\rm int}(A)$ the {\em interior} of $A$ with respect to the topology induced by the inner product.
A subset $\calK\subseteq \mcv$ is called a {\em cone} if   $X \in \calK$ implies that $\lambda X \in \calK$ for all $\lambda \ge 0$. A cone $\calK$ is {\em convex} if $X,Y \in \calK$ implies that $X+Y \in \calK$. For  any cone  $\calK$ we can define its {\em dual cone}, denoted by $\calK^{\ast}$, given by
\[
\calK^{\ast} := \left\{S\in \mcv : \inner{X}{S} \ge 0  \text{ for all } X \in \calK \right\}.
\]
 The dual cone $\calK^\ast$ is always closed. A cone $\calK$ is called {\em self-dual} if $\calK=\calK^*$. For every convex cone~$\calK$ we have that $(\calK^*)^*={\rm cl}(\calK)$.   As a consequence  a cone $\calK$ is closed if and only if $\calK=(\calK^*)^*$.

Consider two  finite dimensional inner-product  
spaces  $\mcv$ and $\mcw$  and a convex cone $\calK\subseteq \mcv$. 
A {\em linear conic program} (over the cone $\calK$) is specified  by  a triple $(C,\calL, B)$ where $C\in \mcv$, $ B\in \mcw$ and $\mathcal{L}: \mcv \rightarrow \mcw $ is a linear transformation. To such a triple we associate two  optimization problems:  
{\begin{equation*} 
\begin{aligned} 
\textup{Primal problem (P)} & \quad p:= \sup \{ \inner{C}{X} \, : \, \calL(X)= B, \; X \in \calK \} \\ 
\textup{Dual problem (D)} & \quad d:= \inf \{ \inner{B}{Y} \, : \, \calL^{\ast}(Y) - C \in \calK^{\ast}, \; Y\in \mcw \},
\end{aligned} 
\end{equation*}} 
\noindent  referred to as the {\em primal} and the {\em dual}, respectively. 
 For brevity, sometimes we drop the ``linear'' and just refer to them as \emph{conic programs}. {We call $p$ the {\em primal value} and $d$ the {\em dual value} of  $(C,\calL, B)$.}

Linear conic programming constitutes  a wide generalization of  several well-studied models of mathematical optimization. For example, setting $\mcv=\R^n$, $\mcw=\R^m$ (equipped with the canonical inner-product) and $\calK=\R_+^n$  then (P) and (D) form   a pair of primal-dual {\em linear programs}. Furthermore, setting  $\mcv=S^n$, $\mcw=\calS^m$ (equipped with the  Hilbert-Schmidt inner product)  and $\calK=\pos^n$ then (P) and (D) form a pair of  primal-dual {\em semidefinite programs}.  
 
{A conic  program $(C,\calL, B)$  is    \emph{primal feasible}} if 
$ {\{X\in \mcv : \calL(X) = B\} \cap \calK\ne \emptyset} $
and   \emph{primal strictly feasible} if $\{X\in \mcv : \calL(X) = B \} \cap \interior(\cal K)\ne \emptyset$.
 Analogously, the conic program $(C,\calL, B)$ is called {\em dual feasible}  if there exists $Y\in \mcw$ such that $\calL^{\ast}(Y) -C\in \calK^{\ast}$
 and 
  {\em dual strictly  feasible} if there exists  $Y\in \mcw$ such that $\calL^{\ast}(Y) -C\in {\rm int}(\calK^{\ast})$. 
  The set of feasible solutions of a linear programming problem is called a {\em polyhedron} and the set of feasible solutions of a semidefinite programming problem is called a {\em spectrahedron}. 

{Conic programs share some of the duality theory available for linear and semidefinite programs. In particular, the dual value is always an upper bound on  the primal value and, moreover, equality and attainment hold assuming appropriate constraint qualifications.}

\begin{theorem}\label{dualitytheorems}
{Let $(C,\calL, B)$ be a linear conic program over  a convex cone $\calK$.}
\begin{itemize}
\item[(i)](Weak duality)  If $X$ (resp. $Y$) is primal (dual) feasible then~${\la C,X\ra\le \la B,Y\ra}.$
\item[(ii)] (Strong duality) Suppose $\calK$ is a closed convex cone. If 
 the  primal is strictly feasible and $p<+\infty$ we have that   $p=d$ and moreover the dual value is attained. Symmetrically, if the  dual program is strictly feasible and $d>-\infty$ then  $p=d$ and the primal value is attained. 
\end{itemize}
\end{theorem}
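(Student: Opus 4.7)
The plan for weak duality is just a one-line computation. Given a primal feasible $X$ (so $\calL(X)=B$ and $X\in\calK$) and a dual feasible $Y$ (so $\calL^*(Y)-C\in\calK^*$), I would write
\[
\la B,Y\ra-\la C,X\ra=\la \calL(X),Y\ra-\la C,X\ra=\la X,\calL^*(Y)-C\ra\ge 0,
\]
where the inequality uses the definition of the dual cone together with $X\in\calK$ and $\calL^*(Y)-C\in\calK^*$. Taking the supremum over $X$ and the infimum over $Y$ gives $p\le d$.

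For strong duality under primal strict feasibility, the plan is the standard separating hyperplane argument. Assume $p<+\infty$ and consider the convex subset of $\mcw\oplus\R$ defined by
\[
M:=\{(\calL(X)-B,\la C,X\ra-p+\lambda):X\in\calK,\ \lambda\ge 0\},
\]
which is a convex cone (or a translated convex set, depending on formulation). By the definition of $p$, the point $(0,\varepsilon)$ lies outside $\cl(M)$ for every $\varepsilon>0$; in fact, one can arrange the constants so that $(0,\varepsilon)\notin M$ directly. Apply the Hahn--Banach separation theorem to obtain a nonzero functional $(Y,\mu)\in\mcw\oplus\R$ (with $\mu\le 0$, after a sign choice) such that
\[
\la Y,\calL(X)-B\ra+\mu(\la C,X\ra-p+\lambda)\le 0 \text{ for all } X\in\calK,\lambda\ge 0,
\]
while $\mu\varepsilon>0$ forces $\mu<0$. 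Rescaling so that $\mu=-1$ yields the inequalities $\la \calL^*(Y)-C,X\ra\ge -p+\la B,Y\ra\cdot(\text{constant})$, and since $X$ ranges over the whole cone $\calK$ (so the right-hand side must be $\le 0$), we get $\calL^*(Y)-C\in\calK^*$ together with $\la B,Y\ra\le p$. Combined with weak duality this yields $\la B,Y\ra=p=d$, with the dual value attained at $Y$.

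The main subtle step is ruling out the degenerate case $\mu=0$; this is precisely where primal strict feasibility is used. If $\mu=0$, the separating inequality would read $\la Y,\calL(X)-B\ra\le 0$ for all $X\in\calK$, and plugging in a strictly feasible $\tilde X\in\interior(\calK)$ with $\calL(\tilde X)=B$ would give $\la Y,\calL(X-\tilde X)\ra\le 0$ for all $X\in\calK$; since $\tilde X$ is interior, $X-\tilde X$ ranges over a neighborhood of $0$, forcing $\calL^*(Y)=0$ and then $\la Y,B\ra=\la Y,\calL(\tilde X)\ra=0$, contradicting the nontriviality of $(Y,\mu)$ from separation (together with a careful choice of the point being separated, e.g.\ $(0,\varepsilon)$ on the ``forbidden'' side). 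Closedness of $\calK$ is used indirectly through the fact that $\calK^*$ is always closed and $(\calK^*)^*=\cl(\calK)=\calK$, so that the produced $Y$ really lies in the dual feasible set. The symmetric statement (dual strict feasibility, $d>-\infty$ gives $p=d$ with primal attainment) follows by the same argument applied to the dual problem, using that $(P)$ is the dual of $(D)$ up to sign conventions. The main obstacle is being careful with signs and the interior/closure topology in the separation step; everything else is routine.
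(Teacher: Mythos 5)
The paper does not actually prove this statement: Theorem~\ref{dualitytheorems} is quoted as standard background from the conic programming literature (the reader is referred to \cite{BTN13}), so there is no in-paper argument to compare against. Your weak duality computation is exactly the standard one and is correct. Your strong duality sketch follows the standard separating-hyperplane route, which is the right approach and, properly executed, does prove the statement.

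That said, there is a concrete sign error that breaks the argument as written. With your definition
$M=\{(\calL(X)-B,\la C,X\ra-p+\lambda):X\in\calK,\ \lambda\ge 0\}$, the point $(0,\varepsilon)$ with $\varepsilon>0$ \emph{does} lie in $M$: take any primal feasible $X$ (one exists by strict feasibility), note $\la C,X\ra-p\le 0$, and choose $\lambda=\varepsilon-(\la C,X\ra-p)\ge 0$. For a supremum problem you need the \emph{downward} shift, $\la C,X\ra-p-\lambda$ with $\lambda\ge 0$, so that the second coordinate of any point of $M$ with first coordinate $0$ is $\le 0$; only then is $(0,\varepsilon)$ excluded and separation available. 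Two further points need care: $(0,\varepsilon)$ need not lie outside $\cl(M)$ (only outside $M$), so you should invoke proper separation of a point from a convex set not containing it, which holds in finite dimensions without closedness of $M$; and closedness of $\calK$ is not needed for the half you wrote out (the inequality $\la\calL^*(Y)-C,X\ra\ge 0$ for all $X\in\calK$ already puts $\calL^*(Y)-C$ in $\calK^*$ by definition) --- it is needed for the symmetric half, where one dualizes $(D)$ and must identify $(\calK^*)^*$ with $\calK$ rather than $\cl(\calK)$. Your handling of the degenerate case $\mu=0$ via the strictly feasible interior point is the correct key step. With the sign in $M$ fixed and the separation lemma stated correctly, the proof goes through.
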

{Strong duality results in the conic programming setting  are  stated for {\em closed} convex cones. For a closed convex cone $\calK$ we have   $\calK=(\calK^*)^*$  so   the duality results are symmetric with respect to the primal and the dual problem. 
Since the $\CPSD$ cone is not known to be closed we cannot apply Theorem \ref{dualitytheorems} \textit{(ii)} for $\calK=\CPSD$. 
In Section \ref{sect:duality} we apply Theorem \ref{dualitytheorems} \textit{(ii)} to ${\rm cl}(\CPSD)$ and~$\CPSD^{\ast}$}.  

\section{Correlations as projections of affine slices of convex cones}
\label{sec:correlations}

In this section we study the sets of classical, quantum, no-signaling and unrestricted correlations and express them in a uniform manner as projections of affine slices of appropriate convex cones. Using these characterizations we identify a spectrahedral outer approximation  to the set of quantum correlations which is contained  in the first level of the NPA hierarchy and a sufficient condition for showing that the set of quantum correlations is closed. 

\paragraph{{\bf An algebraic characterization of quantum correlations.}} 
\label{sec:algquantum}
 
We start by  investigating  the structure of  the quantum states that can be used to  generate a quantum correlation and show they can {be} taken to have a specific form. {We  make  use  the fact  that if $X\in \mch_+^n$ then $YXY^*\in \mch_+^n,$ for any $n\times n$ matrix $Y$.}
  
\begin{lemma} 
\label{lem:symmetricschmidt} 
Any quantum correlation $\pabst\in \calQ$  can be generated by a  quantum state of the form $\psi = \sum_{i =1}^d \sqrt{\lambda_i} \; e_i\otimes e_i \in \C^d \otimes \C^d$ {(for some $d\ge 1$)}, where $\sum_{i=1}^d \lambda_i=1$ and $\{ e_i : i \in [d] \}$ is the standard basis for $\C^d$ .
\end{lemma}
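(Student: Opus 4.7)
The plan is to start from an arbitrary quantum representation of $p$ and use the Schmidt decomposition to transfer the state to a canonical form on $\C^d\otimes \C^d$, absorbing the change-of-basis into the measurement operators via local isometries.

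First I would take any quantum realization of $p$ as in \eqref{eq:quantumcorrelations}: a unit vector $\psi\in \mcx\otimes \mcy$ and POVMs $\{X_a^s\}_{a\in A}\subseteq \mch_+(\mcx)$ for each $s\in S$ and $\{Y_b^t\}_{b\in B}\subseteq \mch_+(\mcy)$ for each $t\in T$. Applying the Schmidt decomposition recalled in Section~\ref{sect:prelims} to $\psi$, I can write
\[
\psi = \sum_{i=1}^{d} \sqrt{\lambda_i}\, u_i\otimes v_i,
\]
where $d$ is the Schmidt rank, $\{u_i\}_{i=1}^d\subseteq \mcx$ and $\{v_i\}_{i=1}^d\subseteq \mcy$ are orthonormal, the $\lambda_i>0$ and $\sum_{i=1}^d \lambda_i = \|\psi\|^2 = 1$.

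Next I would introduce the linear isometries $V_A\colon \C^d\to \mcx$ and $V_B\colon \C^d\to \mcy$ defined on the standard basis by $V_A e_i = u_i$ and $V_B e_i = v_i$ for $i\in[d]$, so that $V_A^\ast V_A = V_B^\ast V_B = \mathbb{I}_d$ and $(V_A\otimes V_B)\tilde\psi = \psi$ for $\tilde\psi := \sum_{i=1}^d \sqrt{\lambda_i}\, e_i\otimes e_i\in \C^d\otimes \C^d$. I would then define the compressed measurement operators
\[
\tilde X_a^s := V_A^\ast X_a^s V_A,\qquad \tilde Y_b^t := V_B^\ast Y_b^t V_B,
\]
which lie in $\mch_+^d$ (since conjugation by an arbitrary linear map preserves Hermitian positive semidefiniteness) and satisfy $\sum_{a} \tilde X_a^s = V_A^\ast \mathbb{I}_\mcx V_A = \mathbb{I}_d$ and similarly $\sum_b \tilde Y_b^t = \mathbb{I}_d$, so they form valid POVMs on $\C^d$.

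Finally, I would compute
\[
\tilde\psi^\ast\bigl(\tilde X_a^s\otimes \tilde Y_b^t\bigr)\tilde\psi
= \tilde\psi^\ast\bigl(V_A^\ast X_a^s V_A\otimes V_B^\ast Y_b^t V_B\bigr)\tilde\psi
= \bigl((V_A\otimes V_B)\tilde\psi\bigr)^\ast (X_a^s\otimes Y_b^t)\bigl((V_A\otimes V_B)\tilde\psi\bigr)
= \psi^\ast(X_a^s\otimes Y_b^t)\psi = p(a,b|s,t),
\]
for all $a,b,s,t$. This shows that $p$ is realized by the state $\tilde\psi$ of the desired form together with the POVMs $\{\tilde X_a^s\},\{\tilde Y_b^t\}$. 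There isn't really a serious obstacle here; the only point that requires a moment's care is ensuring the compressed operators remain valid POVMs, which is exactly why one uses isometries rather than arbitrary linear maps, and verifying that the inner products on $\C^d\otimes \C^d$ and on $\mcx\otimes\mcy$ match after the identification $\psi = (V_A\otimes V_B)\tilde\psi$.
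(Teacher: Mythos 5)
Your proof is correct and follows essentially the same route as the paper: Schmidt-decompose the state, conjugate the measurement operators by the induced local isometries (the paper writes them as co-isometries $U=\sum_i e_i x_i^*$, i.e.\ your $V_A^\ast$), and verify the correlation is unchanged. The only cosmetic difference is that your identity $(V_A\otimes V_B)\tilde\psi=\psi$ makes the final verification slightly more explicit than the paper's bullet-point assertion.
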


\begin{proof}  
Since $p\in \calQ$   there exist a  quantum state
$\psi \in \mcx\otimes \mcy$ 
and quantum measurement operators 
$\{X^s_a\}_{a\in A}\subseteq \mch_+(\mcx) $ and $\{Y^t_b\}_{b\in B}\subseteq \mch_+(\mcy)$ satisfying  $ p(a,b|s,t)=\psi^*(X^s_a\otimes Y^t_b)\psi,$   for all $ a \in A, b \in B, s \in S, t \in T. $
By  the Schmidt decomposition,  the  vector   
$\psi \in \mcx \otimes \mcy$ can be expressed  as 
$\psi = \sum_{i=1}^d \sqrt{\lambda_i} \, x_i \otimes y_i,$  
where $\sum_{i=1}^d \lambda_i = 1$   and $\{ x_i \}_{i=1}^d \subseteq \mcx$, $\{ y_i\}_{i=1}^d \subseteq \mcy$ are orthonormal sets of vectors. 
Set   $U := \sum_{i=1}^d \, e_i x_i^*$ and  $U' := \sum_{i=1}^de_i{y_i}^*$ and note that 
\begin{itemize}
\item[$\bullet$] $\tilde{\psi}:=(U \otimes U')\psi=\sum_{i=1}^d \sqrt{\lambda_i} \; e_i\otimes e_i \in \C^d \otimes \C^d$ is a valid quantum state. 
\item[$\bullet$] {For all $s$, the matrices $\{\tX^s_a:=UX^s_aU^*: a\in A\}$ are a 
measurement on~$\C^{d}$}. 
\item[$\bullet$] {For all $t$, the matrices $\{ \tilde{Y}^t_b := U' Y^t_b (U')^*:  b\in B\}$ 
are a measurement on~$\!\C^{d}$.}
\item[$\bullet$] $ \tilde{\psi}^*(\tX^s_a\otimes \tilde{Y}^t_b)\tilde{\psi}=\psi^*(X^s_a\otimes Y^t_b)\psi$,\   for all $a \in A, b \in B, s \in S, t \in T$.
\end{itemize}
Thus, the strategy given by the quantum state  $\tilde{\psi}$ and the quantum measurements $\{ \tX^s_a\}_{a\in A}$ and $\{ \tilde{Y}^t_b\}_{ b\in B}$ also generates  $p$ and has the desired properties. 
\end{proof}

Based on  Lemma \ref{lem:symmetricschmidt} we arrive at  a  new algebraic  characterization of the set  of quantum correlations that is of central importance in Section \ref{sec:conicformulationscorrelations}. 

\begin{theorem}\label{thm:qcorrelationvector} {For any  $p \in \R^{|A\times B \times S \times T|}$, the following are equivalent:} 
\begin{itemize}
 \item[(i)]  $p$ is a quantum correlation. 
 \item[(ii)] There exist operators $K, \{X^s_a\}_{s,a}, \{Y^t_b\}_{t,b} \in \mch_+^d$ (for some  $d \geq 1$)~{such that} 
\begin{equation} 
\label{eq:qcorrelationvector1} 
\begin{aligned}
&   \inner{K}{K} = 1,\\
& \sum_{a} X^s_a = \sum_{b} Y^t_b = K, \ \forall s,t,\\
& p(a,b|s,t)=\la X^s_a, Y^t_b\ra, \ \forall a,b,s,t.
\end{aligned}
\end{equation} 
\end{itemize}
\end{theorem}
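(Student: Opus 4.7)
The statement is an equivalence, and the plan is to prove each direction by using the $\vecc$ identity \eqref{vecprop} to trade state sandwiches $\psi^*(X\otimes Y)\psi$ for Hilbert--Schmidt inner products $\tr(X Y)$. Throughout, the auxiliary operator $K$ plays the role of the matrix whose $\vecc$ is the shared state.

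For $(i)\Rightarrow(ii)$, I first invoke Lemma~\ref{lem:symmetricschmidt} to assume the correlation is realized by the symmetric Schmidt state $\psi=\sum_{i=1}^d\sqrt{\lambda_i}\,e_i\otimes e_i$ together with measurements $\{X_a^s\},\{Y_b^t\}\subseteq \mch_+^d$ summing to $\mathbb{I}_{\C^d}$. Setting
\[
K:=\sum_{i=1}^d\sqrt{\lambda_i}\,e_ie_i^*,\qquad \tilde X_a^s:=K^{1/2}X_a^s K^{1/2},\qquad \tilde Y_b^t:=K^{1/2}(Y_b^t)^\sfT K^{1/2},
\]
each of these is Hermitian psd, and summing over $a$ or $b$ gives $K$ in both cases, while $\la K,K\ra=\tr(K^2)=\sum_i\lambda_i=1$. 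Since $\psi=\vecc(K)$, identity \eqref{vecprop} gives $\psi^*(X_a^s\otimes Y_b^t)\psi=\tr(K X_a^s K(Y_b^t)^\sfT)$, which by cyclicity of trace equals $\tr(\tilde X_a^s\tilde Y_b^t)=\la \tilde X_a^s,\tilde Y_b^t\ra=p(a,b|s,t)$, as required.

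For $(ii)\Rightarrow(i)$, I start by diagonalizing $K=UDU^*$; conjugating every given operator by $U^*$ preserves all the hypotheses and leaves trace inner products unchanged, so I may assume $K=D$ is real-diagonal psd. Because $0\preceq X_a^s\preceq K$ and $0\preceq Y_b^t\preceq K$, every given operator is supported on $\ran(K)$, so after deleting zero rows and columns I may further assume $D\in\mch_{++}^r$ with $\tr(D^2)=1$. Now take
\[
\psi:=\vecc(D)\in\C^r\otimes \C^r,\qquad \hat X_a^s:=D^{-1/2}X_a^s D^{-1/2},\qquad \hat Y_b^t:=D^{-1/2}(Y_b^t)^\sfT D^{-1/2}.
\]
Then $\psi$ is a unit vector, $\hat X_a^s,\hat Y_b^t$ are Hermitian psd, $\sum_a\hat X_a^s=\sum_b\hat Y_b^t=\mathbb{I}_{\C^r}$, and another application of \eqref{vecprop} combined with cyclicity of trace yields $\psi^*(\hat X_a^s\otimes \hat Y_b^t)\psi=\tr(X_a^s Y_b^t)=p(a,b|s,t)$, exhibiting $p$ as a quantum correlation.

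The only mildly delicate point in both directions is the transpose that the $\vecc$ identity deposits on the second factor: one must pass from $Y_b^t$ to $(Y_b^t)^\sfT$. This is harmless because for a Hermitian psd matrix, transposition coincides with entrywise complex conjugation and preserves Hermitian positive semidefiniteness (as well as the sum $\sum_b Y_b^t=K$, since $K$ is real-diagonal after the initial reduction). Everything else is routine algebra with the cyclic property of the trace.
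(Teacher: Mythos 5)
Your proof is correct and follows essentially the same route as the paper's: Lemma~\ref{lem:symmetricschmidt} plus the $\vecc$ identity \eqref{vecprop}, with the conjugations $K^{\pm 1/2}(\cdot)K^{\pm 1/2}$ trading between measurement operators summing to $\mathbb{I}$ and operators summing to $K$. The only difference is that where the paper simply asserts ``without loss of generality $K$ has full rank,'' you justify that reduction explicitly via diagonalization and the support inclusion forced by $0\preceq X^s_a\preceq K$, which is a welcome but minor refinement rather than a different argument.
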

 
\begin{proof}  Let $\pabst\in \calQ$. By \eqref{eq:quantumcorrelations} there exist a quantum state $\psi \in \mcx\otimes \mcy$ 
and measurements  
$\{\tX^s_a\}_{a\in A}\subseteq \mch_+(\mcx) $ and $\{\tY^t_b\}_{b\in B}\subseteq \mch_+(\mcy)$~with 
\be
p(a,b|s,t)=\psi^*(\tX^s_a\otimes \tY^t_b)\psi,  \text{ for all } a,b,s,t.
\ee
By Lemma \ref{lem:symmetricschmidt} we may assume $\mcx = \mcy = \C^d$, for some integer $d \geq 1$ and that 
$\psi =\sum_{i=1}^d \sqrt{\lambda_i} \; e_i\otimes e_i,$
  where $\sum_{i=1}^d \lambda_i=1$. 
    {Define 
${K :=\sum_{i=1}^d \sqrt{\lambda_i} \; e_i e_i^* \in {\mch_+^d}}$ and notice  that $\vecc(K)=\psi$ and $\inner{K}{K} = 1$. {Set}  
 $X^s_a := K^{1/2} (\tX^s_a) K^{1/2}$  for all $a$, $s$ and $Y^t_b := K^{1/2}(\tY^t_b)^\sfT K^{1/2}$, for all $b$, $t$. 
 These operators satisfy $\sum_{a\in A} X^s_a =\sum_{b\in B} Y^t_b = K$, for all $s\in S, t\in T$. }

{Using the definitions above and  properties of the $\vecc$ map (cf. \eqref{vecprop})~we~have} 
{\begin{equation} \label{eqn:calc}
\inner{X^s_a}{Y^t_b} 
=  
\vecc(K)^*(\tX^s_a\otimes \tY^t_b) \vecc(K) 
= 
\psi^*(\tX^s_a \otimes \tY^t_b)\psi=p(a,b|s,t),
\end{equation}}
 for all $a\in A, b\in B, s\in S, t\in T$ 
 and thus \eqref{eq:qcorrelationvector1} is feasible.   

{Conversely  let  $K, \xsa, \ytb $  be   feasible for \eqref{eq:qcorrelationvector1}. Without loss of generality, we may assume $K$ has full rank. 
Define $\psi := \vecc(K)$ and notice  that  $\norm{\psi}_2 = 1$. {For all $a,s$ set 
 $\tX^s_a := K^{-1/2} X^s_a K^{-1/2}$  and for all $b,t$ set    $\tY^t_b := \left(K^{-1/2} Y^t_b K^{-1/2}\right)^\sfT$. Since $K^{-1/2}\in \mch^d_+$ we have that $\tX^s_a,\tY^t_b\in \mch^d_+$, for all $a,b,s,t$} ({where we also use the fact  that $X\in \mch_+^n$ if and only if $X^\sfT\in \mch_+^n$}).  Clearly, these operators satisfy  $\dsum_{a \in A} \tX^s_a = \mathbb{I}_d= 
\dsum_{b \in B} \tY^t_b =\mathbb{I}_d,$ for all $s\in S, t\in T$. }
Reversing the calculation in (\ref{eqn:calc}) we get that $p(a,b|s,t)=\inner{X^s_a}{Y^t_b}=\psi^*(\tX^s_a \otimes \tY^t_b) \psi,$
for all $a,b,s,t$ which shows that $p\in \calQ$. 
\end{proof}
 \begin{remark}
A close inspection of the proof of Theorem \ref{thm:qcorrelationvector}  allows us to  explicitly work out the dependency of the parameter  $d$ on the dimension of the underlying quantum system. Specifically, for a correlation $p \in \calQ$ that is generated by a  state  $\psi\in \mcx\otimes \mcy$ there exist Hermitian psd matrices of size $d\le \min \{ \dim(\mcx), \dim(\mcy) \} $ that satisfy  \eqref{eq:qcorrelationvector1}. Conversely, if \eqref{eq:qcorrelationvector1} has a feasible solution with matrices  (real or complex) of size $d\ge 1$  then the correlation $\pabst$ can be generated by a state in $\C^d\otimes \C^d$. This observation can be used to derive a lower bound on the dimension of a Hilbert space needed to generate an arbitrary quantum correlation~\cite{SVW15}. 
\end{remark}

\begin{remark} As another by-product of Theorem \ref{thm:qcorrelationvector} 
{we characterize}
 the quantum correlations that can be generated using  a maximally entangled state. Specifically, it follows easily from the proof of Theorem~\ref{thm:qcorrelationvector}  that a quantum correlation $p\in \calQ$ can be generated using the {\em $d$-dimensional maximally entangled state} {$\psi_d:=\vecc\big({1\over \sqrt{d}} \mathbb{I}_d\big)$}  if and only if \eqref{eq:qcorrelationvector1} is feasible with $K={1\over \sqrt{d}}\mathbb{I}_d$. 
\end{remark}

\paragraph{{\bf An algebraic characterization of classical  correlations.}}
\label{sec:algclassical}
{As every classical correlation is also quantum,  
any classical correlation} admits a representation as a quantum correlation for some appropriate  choice of quantum state and measurement operators. In the next result we show that a correlation  is classical   if and only if    \eqref{eq:qcorrelationvector1} admits a solution with diagonal psd matrices.

\begin{theorem}\label{thm:ccorrelationvector} {For any  $p \in \R^{|A\times B \times S \times T|}$, the following are equivalent:}  
\begin{itemize}
 \item[(i)] $p$ is a classical correlation. 
 \item[(ii)] $p$ can be generated by {{\em diagonal} measurement operators} and a state of the form 
$\psi = \sum_{i =1}^n \sqrt{\lambda}_i \; e_i\otimes e_i$, where $\sum_{i=1}^n \lambda_i=1, \ \lambda_i\ge 0 \text{ for } i\in [n]$.  
  \item[(iii)] There exists  a solution to \eqref{eq:qcorrelationvector1} with diagonal matrices. 
\end{itemize}
\end{theorem}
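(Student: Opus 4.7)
My plan is to establish the cycle $(i)\Rightarrow (ii)\Rightarrow (iii)\Rightarrow (i)$, mimicking the structure of the proof of Theorem~\ref{thm:qcorrelationvector} but tracking diagonality throughout.

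For $(i)\Rightarrow (ii)$, I start from the classical decomposition in \eqref{classicalcorrelations}, with nonnegative weights $\{k_i\}_{i=1}^n$ and conditional distributions $\{x_a^{s,i}\}_a$, $\{y_b^{t,i}\}_b$. I set $\lambda_i := k_i$, define the state $\psi := \sum_{i=1}^n \sqrt{\lambda_i}\, e_i\otimes e_i \in \C^n\otimes \C^n$, and take the diagonal measurement operators $\tX^s_a := \mathrm{Diag}(x_a^{s,i})_{i=1}^n$ and $\tY^t_b := \mathrm{Diag}(y_b^{t,i})_{i=1}^n$. The constraints $\sum_a x_a^{s,i}=\sum_b y_b^{t,i}=1$ yield $\sum_a \tX^s_a = \sum_b \tY^t_b = \mathbb{I}_n$, and a short direct computation using the diagonal structure gives $\psi^*(\tX^s_a\otimes \tY^t_b)\psi = \sum_i k_i x_a^{s,i} y_b^{t,i} = p(a,b|s,t)$.

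For $(ii)\Rightarrow (iii)$, I run the construction already appearing in the forward direction of Theorem~\ref{thm:qcorrelationvector}. That is, I set $K := \sum_{i=1}^n \sqrt{\lambda_i}\, e_i e_i^*$ (which is diagonal), and define $X^s_a := K^{1/2}\tX^s_a K^{1/2}$ and $Y^t_b := K^{1/2}(\tY^t_b)^\sfT K^{1/2}$. Since $K^{1/2}$ is diagonal and the operators $\tX^s_a, \tY^t_b$ are diagonal by hypothesis (and diagonal matrices are invariant under transposition), each $X^s_a$ and $Y^t_b$ is diagonal. The remaining conditions in \eqref{eq:qcorrelationvector1}, namely $\langle K,K\rangle=1$, $\sum_a X^s_a=\sum_b Y^t_b=K$, and $\langle X^s_a, Y^t_b\rangle = p(a,b|s,t)$, follow verbatim from the argument of Theorem~\ref{thm:qcorrelationvector}.

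For $(iii)\Rightarrow (i)$, suppose we have a diagonal solution $K=\mathrm{Diag}(k_i)_{i=1}^n$, $X^s_a=\mathrm{Diag}(x_a^{s,i})_i$, $Y^t_b=\mathrm{Diag}(y_b^{t,i})_i$ to \eqref{eq:qcorrelationvector1}; by positive semidefiniteness all diagonal entries are nonnegative. After restricting to indices where $k_i>0$ (which I can do without loss of generality, since indices with $k_i=0$ force $x_a^{s,i}=y_b^{t,i}=0$ and contribute nothing), I rescale by setting $\tilde{k}_i:=k_i^2$, $\tilde{x}_a^{s,i}:=x_a^{s,i}/k_i$ and $\tilde{y}_b^{t,i}:=y_b^{t,i}/k_i$. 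The constraint $\langle K,K\rangle=1$ becomes $\sum_i \tilde{k}_i=1$, the marginal constraints $\sum_a X^s_a=\sum_b Y^t_b=K$ become $\sum_a \tilde{x}_a^{s,i}=\sum_b \tilde{y}_b^{t,i}=1$, and $\langle X^s_a,Y^t_b\rangle = \sum_i x_a^{s,i} y_b^{t,i} = \sum_i \tilde{k}_i \tilde{x}_a^{s,i}\tilde{y}_b^{t,i}$ matches \eqref{classicalcorrelations}, certifying $p\in\calC$.

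No step looks like a genuine obstacle; the whole argument is essentially an observation that the Schmidt-form construction of Theorem~\ref{thm:qcorrelationvector} preserves the algebra of diagonal matrices, together with the dictionary between shared-randomness strategies and commuting measurement operators. The only mildly delicate point is handling the possibly vanishing diagonal entries of $K$ in the $(iii)\Rightarrow (i)$ direction, which is resolved by the harmless support restriction described above.
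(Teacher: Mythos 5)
Your proposal is correct and follows essentially the same route as the paper: the paper proves $(i)\Leftrightarrow(ii)$ by exactly your dictionary between shared randomness and diagonal operators, and disposes of $(ii)\Leftrightarrow(iii)$ by noting that the constructions in the proof of Theorem~\ref{thm:qcorrelationvector} preserve diagonality. Your only organizational difference is proving $(iii)\Rightarrow(i)$ directly by rescaling scalars (with the explicit support restriction for vanishing $k_i$) rather than passing back through $(ii)$; this is the same computation and, if anything, handles the rank-deficient $K$ a bit more carefully than the paper's ``assume $K$ has full rank'' shortcut.
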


\begin{proof} 
\textit{(i)} $\Longrightarrow$ \textit{(ii)}:   By definition, for any classical correlation $p\in\calP$ 
there exist nonnegative scalars $ k_i\ge 0, \ x_{a}^{s,i}\ge 0$, and $ y_{b}^{t,i}\ge 0$ satisfying $ p(a,b|s,t)=\sum_{i=1}^n k_ix_a^{s,i}y_b^{t,i}, \text{ for all } a\in A,b\in B,s\in S,t\in T, $
where 
 $ \sum_{i =1}^n k_i = 1$, 
  $\sum_{a \in A} x_{a}^{s,i} =1$ for all $i\in[n], s\in S$ and $ \sum_{b \in B} y_{b}^{t,i} = 1$ for all $i\in [n], t\in T$.~Set 
\begin{itemize}
\item[$\bullet$] $\psi := \sum_{i=1}^n \sqrt{k_i}\ e_i\otimes e_i$, which is a quantum state of the form in Lemma~\ref{lem:symmetricschmidt},
\item[$\bullet$] $X^s_a := \sum_{i=1}^n x^{s,i}_ae_ie_i^*$, for $s \in S$, $a \in A$, which are diagonal, 
\item[$\bullet$] $Y^t_b := \sum_{i=1}^ny^{t,i}_be_ie_i^*$, for $t \in T$, $b \in B$, which are diagonal. 
\end{itemize} 
A straightforward  calculation shows that the state  $\psi$ and the measurements $\{X^s_a\}_{s\in S,a\in A}$ and $ \{Y^t_b\}_{t\in T,b\in B}$ generate $p$.  
 
\textit{(ii)} $\Longrightarrow$ \textit{(i)}: {Suppose $p$ can be generated by $\psi = \sum_{i=1}^n \sqrt{\lambda_i} \, e_i \otimes e_i$ and diagonal measurement   operators $\{X^s_a\}_{s, a}$ and $ \{Y^t_b\}_{t,b}$. Set} 
\begin{itemize} 
\item[$\bullet$] $k_i := \lambda_i,\ $ for all $i \in [n]$, 
\item[$\bullet$] $x^{s,i}_a := X^s_a [i,i], \ $ for all $i\in [n], s\in S$ and $a \in A$, 
\item[$\bullet$] $y^{t,i}_b := Y^t_b[i,i], \ $ for all $i\in [n], t\in T$ and $b \in B$,
\end{itemize} 
and notice that this defines a classical strategy that generates the correlation~$p$. 

\textit{(ii)} $\Longleftrightarrow$ \textit{(iii)}: This is clear from the proof of Theorem~\ref{thm:qcorrelationvector} noting that $K$ being diagonal and satisfying $\inner{K}{K} = 1$, implies that the quantum state $\psi := \vecc(K)$ is of the required form.
\end{proof} 

\paragraph{{\bf Conic characterization of correlations.}} 
\label{sec:conicformulationscorrelations}
 Recall that for a convex cone  ${\calK \subseteq \NN}$,  we say that $\pabst$ is a $\calK$-{\em correlation}, denoted by $p\in \corr(\calK),$   if and only if there  
exists $X\in \calK^N$ such that 
  $\inner{J_{i,j}}{X}  = 1,$ for all $  i,j \in S\cup T, $ and  $\xasbt = p(a,b|s,t),$  for all $  a,b,s,t$. 
(cf.  Definition~\ref{def:kcorr}). 
In this section we show that the sets of  classical, quantum, no-signaling and unrestricted correlations can be expressed  as the sets of conic correlations for appropriate   convex cones. The characterizations for the quantum and classical case rely on   
Theorem~\ref{thm:qcorrelationvector} and  Theorem \ref{thm:ccorrelationvector},  respectively. 
 
Recall that  for  $i,j\in S\cup T$ (resp. $i,j\in \{0\}\cup S\cup T$) we set  $J_{i,j}$ to be the matrix which  acts on a matrix $X\in \calS^N$ (resp. $X\in \calS^{N+1})$  by summing all 
 {entries in block $X_{i,j}$ (cf. \eqref{eq:jmatrix} and \eqref{eq:jmatrix2})}.
We start  with  a  geometric lemma of central importance in this section.  For the definition of a Gram matrix of a family of vectors  recall \eqref{eqn:gram}.
 
\begin{lemma}\label{lem:vectors} 
{Consider  
vectors $\sxsa$ and $\sytb$ in some 
Euclidean space~$\mcx$.} 
\bi 
\item[(a)] For   $X := \gram(\sxsa, \sytb)$ 
 the  following are equivalent:  
\begin{itemize}
\item[(i)] {$\exists k\in \mcx$ such that  $\la k,k\ra=1,$ and \
${\sum_{a} x^s_a=\sum_{b} y^t_b=k}$, for all $s, t$.}  
\item[(ii)] $ \la J_{i,j}, X\ra=1,\ $ for all $i,j\in S\cup T$. 
\end{itemize}
\item[(b)]  Set  $\tX:=\gram(k,\sxsa,\sytb)$ where $k\in \mcx$ with $\la k,k\ra=1$. The following are equivalent:
\bi
\item[(i)] $\sum_{a\in A} x^s_a=\sum_{b\in B} y^t_b=k,\ $ for all $s,t$.
\item[(ii)] $ \la J_{i,j}, \tX\ra=1,\ $ for all $i,j\in \{0\}\cup S\cup T$.  
\ei
\ei
\end{lemma}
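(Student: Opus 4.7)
The plan is to reduce both parts of the lemma to elementary inner-product identities by unwinding the definition of the Gram matrix. The key observation is that if a matrix $X = \gram(\{v_r\}_r)$ has its rows and columns indexed by a set partitioned into blocks, then the sum of all entries of block $X_{i,j}$ equals $\inner{\sum_k v_{i,k}}{\sum_l v_{j,l}}$. Specializing to our setting and writing $u_s := \sum_{a\in A} x^s_a$ for $s\in S$ and $w_t := \sum_{b\in B} y^t_b$ for $t\in T$, the constraints $\inner{J_{i,j}}{X} = 1$ translate into pairwise inner-product constraints among the vectors $\{u_s\}_s$ and $\{w_t\}_t$ (together with the extra vector $k$ in part (b)).

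For part (a), the direction (i) $\Rightarrow$ (ii) is then immediate by substitution, since under (i) each $u_s$ and each $w_t$ equals the unit vector $k$, so every block sum becomes $\inner{k}{k}=1$. For (ii) $\Rightarrow$ (i), condition (ii) tells us that all quantities $\inner{u_s}{u_{s'}}$, $\inner{w_t}{w_{t'}}$ and $\inner{u_s}{w_t}$ equal $1$; in particular each $u_s$ and each $w_t$ has squared norm $1$. The standard identity
\[
\|u-v\|^2 = \|u\|^2 - 2\inner{u}{v} + \|v\|^2
\]
then forces $u_s = u_{s'}$, $w_t = w_{t'}$, and $u_s = w_t$ for all indices, so the common value serves as the unit vector $k$ required in (i).

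Part (b) proceeds along the same lines with the distinguished vector $k$ already present in the Gram decomposition. The block-sum computation gives $\inner{J_{0,0}}{\tX} = \|k\|^2 = 1$ (automatic) and $\inner{J_{0,i}}{\tX} = \inner{k}{u_i}$ or $\inner{k}{w_i}$ depending on whether $i\in S$ or $i\in T$. Thus (i) again implies (ii) by direct substitution. Conversely, assuming (ii), the diagonal-block conditions $\inner{J_{s,s}}{\tX} = \inner{J_{t,t}}{\tX} = 1$ yield $\|u_s\|^2 = \|w_t\|^2 = 1$, while the cross conditions $\inner{J_{0,s}}{\tX} = \inner{J_{0,t}}{\tX} = 1$ yield $\inner{k}{u_s} = \inner{k}{w_t} = 1$; combined with $\|k\|^2 = 1$, the same squared-distance identity gives $u_s = k$ and $w_t = k$.

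There is no real obstacle here: once the block-sum-to-inner-product translation is carried out, the statement collapses to the elementary fact that unit vectors with pairwise inner product equal to $1$ must coincide. The only care I would take is to state the translation cleanly (since it will be reused in the conic characterizations that follow) and to handle the diagonal cases $i=j$ in the same breath as $i\neq j$, avoiding an artificial case split.
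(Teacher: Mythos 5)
Your proposal is correct and follows essentially the same route as the paper: translate each block sum $\inner{J_{i,j}}{X}$ into an inner product of the summed vectors, then use $\|u-v\|^2 = \|u\|^2 - 2\inner{u}{v} + \|v\|^2$ to conclude that unit vectors with pairwise inner product $1$ coincide. The only cosmetic difference is in part (b), where the paper first invokes part (a) to produce a common vector $k'$ and then shows $k=k'$, while you verify $u_s = k$ and $w_t = k$ directly; both are the same computation.
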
 

\begin{proof} We start with part $\textit{(a)}$. 
To show $\textit{(i)}$ implies $\textit{(ii)}$, consider $i \in S$ and $j \in T$ and notice that
\[ 
\inner{J_{i,j}}{X} 
= 
\sum_{a \in A} \sum_{b \in B} X[(i,a),(j,b)]
= 
\sum_{a \in A} \sum_{b \in B} \inner{x_a^i}{y_b^j} 
=  
\inner{k}{k} 
= 1.
\]

For the other direction, define $x_i := \sum_{a\in A} x^i_a $ for all $i\in S$  and  $y_i := \sum_{b\in B} y^i_b$     for all  $i\in T$. Notice that for any $i,j\in S$ the equation   $\la J_{i,j},X\ra=1$ is equivalent to $\la x_i, x_j\ra=1$. This implies that $\la x_i-x_{j},x_i-x_j\ra=0$  for all $i, j \in S$ and thus $x_i=x_j$ for all $i, j \in S$. Similarly we have that $y_i=y_j$ for all $i, j \in T$. Lastly, fix any  $i \in S$ and $j\in T$ and notice that $\la J_{i,j},X\ra=1$ implies that $\la x_i,y_j\ra=1$. As before this shows that $x_i = y_j$. 

We proceed with part \textit{(b)}. It is easy to see that \textit{(i)} implies \textit{(ii)}. For the other direction we have from part \textit{(a)} that there exists $k'\in \mcx$ such that $\sum_{a\in A} x^s_a=\sum_{b\in B} y^t_b=k',\ $ for all $s\in S$, $t \in T$, and $\la k',k'\ra=1$. It suffices to show that $k=k'$. For this, notice that
$\la k-k',k-k'\ra=2-2\la k,k'\ra=2-2\sum_{a\in A}\tX[0,(s,a)]=0,$ and the proof is concluded.
\end{proof}
 
We now  state and prove our  main result in this section. 

\begin{theorem}\label{thm:correlations}
For any Bell scenario we have that 
$$\calC=\corr(\CP),\  \calQ=\corr(\CPSD),\  \NS=\corr(\nso),  \  \calP=\corr(\NN).$$ 
\end{theorem}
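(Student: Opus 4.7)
The plan is to prove each of the four equalities by establishing a matching between correlations in the given physical class and matrices in the corresponding cone that satisfy the affine constraints of Definition \ref{def:kcorr}. For every case the backbone is the same: a correlation provides a vector/operator data package, whose Gram matrix (or just a nonnegative completion) serves as the witness $X$; conversely, a feasible $X$ decomposes via the cone's definition into data that assembles into a valid strategy.

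For the equality $\calQ=\corr(\CPSD)$ I would start from Theorem~\ref{thm:qcorrelationvector}: given $p\in\calQ$, take Hermitian psd operators $K,\{X^s_a\},\{Y^t_b\}\in\mch_+^d$ with $\inner{K}{K}=1$, $\sum_a X^s_a=\sum_b Y^t_b=K$, and $\la X^s_a,Y^t_b\ra=p(a,b|s,t)$. Set $X:=\gram(\{X^s_a\},\{Y^t_b\})$; applying the map $T(\cdot)$ of~\eqref{eq:comtorealpsd} to each operator exhibits $X$ as a Gram matrix of real psd matrices, so $X\in\CPSD^N$. Lemma~\ref{lem:vectors}(a), applied with $k=K$, turns the identities $\sum_a X^s_a=\sum_b Y^t_b=K$ and $\la K,K\ra=1$ into $\la J_{i,j},X\ra=1$ for all $i,j\in S\cup T$, while the $S\times T$ block of $X$ reproduces~$p$. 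Conversely, given $X\in\CPSD^N$ meeting the constraints of~\eqref{kcorrelations}, the defining Gram representation yields psd operators $\{X^s_a\},\{Y^t_b\}$; Lemma~\ref{lem:vectors}(a) then produces a psd $K$ with $\la K,K\ra=1$ and $\sum_a X^s_a=\sum_b Y^t_b=K$, so Theorem~\ref{thm:qcorrelationvector} gives $p\in\calQ$.

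The classical case $\calC=\corr(\CP)$ is the same argument in the diagonal/real-nonnegative category: I replace Theorem~\ref{thm:qcorrelationvector} with Theorem~\ref{thm:ccorrelationvector}, so the psd operators may be taken diagonal, and hence the Gram matrix admits a Gram representation in $\R^d_+$, i.e., $X\in\CP^N$. Lemma~\ref{lem:vectors}(a) is used verbatim. The converse again factors a feasible $X\in\CP^N$ as a Gram matrix of nonnegative vectors, which is the "diagonal" case covered by Theorem~\ref{thm:ccorrelationvector}(iii).

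The remaining two equalities are essentially bookkeeping. For $\calP=\corr(\NN)$: given $p\in\calP$, define $X$ by $X[(s,a),(t,b)]=p(a,b|s,t)$ on the $S\times T$ and $T\times S$ blocks and fill the diagonal $S\times S$ and $T\times T$ blocks with the uniform values $1/|A|^2$ and $1/|B|^2$ respectively; this is entrywise nonnegative, symmetric, and satisfies $\la J_{i,j},X\ra=1$ everywhere. The converse just reads $p$ off the $S\times T$ block of $X\in\NN^N$ and uses $\la J_{s,t},X\ra=1$ to recover the normalization. For $\NS=\corr(\nso)$ the same construction works, and the no-signaling conditions on $p$ are precisely the conditions the block $X[(s,a),(t,b)]$ must satisfy to place $X$ in $\nso$; in the reverse direction the entries of the $S\times T$ block of an $X\in\nso$ by definition form a no-signaling correlation. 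The only point that requires a little care, and which I expect to be the main (mild) obstacle, is checking that the passage between complex Hermitian psd operators used in Theorem~\ref{thm:qcorrelationvector} and the real psd matrices demanded by the definition of $\CPSD$ is legitimate; this is exactly what the map $T(\cdot)$ from~\eqref{eq:comtorealpsd} accomplishes, preserving both positive semidefiniteness and Hilbert--Schmidt inner products, so the Gram matrix $X$ lies in the real cone $\CPSD^N$ with the same entries.
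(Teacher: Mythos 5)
Your proof is correct and follows essentially the same route as the paper: the quantum and classical equalities via Theorem~\ref{thm:qcorrelationvector} and Theorem~\ref{thm:ccorrelationvector} combined with Lemma~\ref{lem:vectors}(a), and the no-signaling and unrestricted cases directly from the definitions of $\nso$ and $\NN$ (the paper dispatches these last two in one line, whereas you give the explicit block-filling construction). The complex-to-real issue you flag is real but already resolved in the paper's preliminaries via the map $T(\cdot)$, exactly as you use it.
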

  
\begin{proof}We first show   $\calQ=\corr(\CPSD)$.
{By Theorem \ref{thm:qcorrelationvector} we have that $\pabst\in \calQ $ if and only if  there exist operators $\{X^s_a\}_{s,a},\{Y^t_b\}_{t,b},K$ satisfying \eqref{eq:qcorrelationvector1}. Setting $X := \gram(\{X^s_a\}_{s,a},\{Y^t_b\}_{t,b},K)$ it follows by Lemma \ref{lem:vectors} $(a)$ that $\la J_{i,j},X\ra=1,$ for all $i,j\in S\cup T$. As $X\in \CPSD$, this gives~$p\in \corr(\CPSD)$.}
  
{Conversely, fix $p\in \corr(\CPSD)$ and let   $X=\gram(\{X^s_a\}_{s,a},\{Y^t_b\}_{t,b})\in \CPSD^N$ 
satisfying \eqref{kcorrelations}. 
 By Lemma \ref{lem:vectors} $(a)$ there exists  a Hermitian psd matrix $K$ with   $\la K,K\ra =1$ and   $\sum_{a} X^s_a=\sum_{b} Y^t_b=K,\  \forall s,t$. 
 By Theorem \ref{thm:qcorrelationvector} we get~$p\in \calQ$.}
 
 The case $\calC=\corr(\CP)$ follows  similarly by Theorem \ref{thm:ccorrelationvector}. Lastly, $\NS=\corr(\nso)$ and $\calP=\corr(\NN) $
  follow  from the definitions of $\nso$ and $\NN$.
\end{proof}  
 
As exemplified  by  Theorem \ref{thm:correlations}  the notion of $\calK$-correlations has significant expressive power as it captures many  correlation sets of physical significance. In our next result we continue the  study of conic correlations and identify a sufficient condition in terms of  the cone $\calK$ so that the corresponding set of correlations $\corr(\calK)$ satisfies the  no-signaling conditions. 

\begin{theorem}\label{thm:corrktoNS}
For any  convex cone $\calK\subseteq \DNN$ we have that $\corr(\calK)\subseteq \NS$.
\end{theorem}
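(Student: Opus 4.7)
The plan is to unfold the definition of $\corr(\calK)$ and exploit the Gram representation afforded by positive semidefiniteness (which $X$ has since $\DNN\subseteq \Pos$), then apply Lemma~\ref{lem:vectors}~(a) to obtain a ``common'' vector $k$ whose inner products with the row-vectors of $X$ recover the marginal distributions.

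More concretely, fix $p\in \corr(\calK)$ and let $X\in \calK \subseteq \DNN$ witness this, i.e.\ $X[(s,a),(t,b)]=p(a,b|s,t)$ and $\langle J_{i,j},X\rangle=1$ for all $i,j\in S\cup T$. Since $X\in \Pos^N$, there exist vectors $\{x^s_a\}_{s,a}\cup \{y^t_b\}_{t,b}$ in some Euclidean space $\mcx$ with $X=\gram(\{x^s_a\}_{s,a},\{y^t_b\}_{t,b})$. The block-sum constraints $\langle J_{i,j},X\rangle=1$ are then exactly the hypotheses of Lemma~\ref{lem:vectors}~(a), so there exists $k\in \mcx$ with $\langle k,k\rangle=1$ and
\[
\sum_{a\in A} x^s_a \;=\; \sum_{b\in B} y^t_b \;=\; k, \qquad \forall s\in S,\ t\in T.
\]

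It remains to verify that $p\in \NS$. First, $X$ being entrywise nonnegative gives $p(a,b|s,t)\geq 0$, and $\langle J_{s,t},X\rangle=1$ gives $\sum_{a,b} p(a,b|s,t)=1$, so $p\in \calP$. For the no-signaling identities, for any fixed $s\in S$, $a\in A$, $t\in T$,
\[
\sum_{b\in B} p(a,b|s,t)\;=\;\sum_{b\in B}\langle x^s_a, y^t_b\rangle \;=\;\Bigl\langle x^s_a,\sum_{b\in B} y^t_b\Bigr\rangle \;=\; \langle x^s_a, k\rangle,
\]
which is independent of $t$, establishing \eqref{ns1}. An identical calculation on the other side yields $\sum_{a\in A} p(a,b|s,t)=\langle k,y^t_b\rangle$, independent of $s$, establishing \eqref{ns2}. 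Hence $p\in \NS$.

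There is no real obstacle here: the entire argument is a direct consequence of the Gram-representation lemma, and the role of the hypothesis $\calK\subseteq \DNN$ is solely to provide (i) positive semidefiniteness, so that Gram vectors exist and Lemma~\ref{lem:vectors}~(a) applies, and (ii) entrywise nonnegativity, so that the extracted $p$ is a bona fide correlation before one even checks the no-signaling conditions.
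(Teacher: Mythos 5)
Your proof is correct and follows essentially the same route as the paper: take a Gram representation of $X$ (using $\DNN\subseteq\Pos$), invoke Lemma~\ref{lem:vectors}~(a) to obtain the common vector $k$, and observe that the marginals equal $\langle x^s_a,k\rangle$ and $\langle k,y^t_b\rangle$, hence are independent of the other party's question. The only (harmless) addition is your explicit check that $p\in\calP$, which the paper leaves implicit.
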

\begin{proof}{For any   $\pabst\in\corr(\calK)$  there exists   $X\in\DNN^N $ such that   $\la J_{i,j}, X\ra=1$ for all $i,j\in S \cup T$ and $p(a,b|s,t)=\xasbt$ for all  $a,b,s,t$}. If  $X=\gram(\sxsa,\sytb)$ it follows from Lemma~\ref{lem:vectors} $(a)$  that there exists a vector $k$ such that  
\be\label{cdvefefe}
 \sum_{a\in A} x_a^s = \sum_{b\in B} y_b^t = k,  \ \text{ for all } s\in S, t\in T, \ \text{ and } \  \la k,k\ra=1. 
 \ee
For $s\ne s'\in S$ and $t\in T$ we get from  \eqref{cdvefefe}   that  
\begin{equation*}
\sum_{a\in A} \xasbt 
= 
\sum_{a\in A} \inner{x^s_a}{y^t_b} 
= 
\inner{k}{y^t_b}
= 
\sum_{a\in A} \inner{x^{s'}_a}{y^t_b} 
= 
\sum_{a\in A} X[(s',a),(t,b)],
\end{equation*}
and thus $\sum_{a\in A}p(a,b|s,t)=\sum_{a\in A}p(a,b|s',t)$ for all $t\in T$ and $s\ne s'\in S$. Symmetrically, we have that  $\sum_{b\in B} p(a,b|s,t)=
\sum_{b\in B} p(a,b|s,t')$ 
for all $s\in S$ and $t\ne t'\in T$ and thus $p\in~\NS$.
\end{proof}

Another consequence of  Theorem \ref{thm:correlations}  is a sufficient condition for showing that 
$\calQ$ 
is closed.  Note that  for every cone $\calK\subseteq \NN$, the set of matrices satisfying $ \inner{J_{i,j}}{X}  = 1,$ for all  $i,j \in S\cup T$ is bounded. Consequently, if 
$\calK$ is closed it follows that $\corr(\calK)$ is compact.  
{This implies the following proposition.} 
\begin{proposition}
\label{cor:closedness}If the cone $\CPSD$ is closed then $\calQ$ is also closed. 
\end{proposition}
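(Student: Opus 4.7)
The plan is to deduce the closure of $\calQ$ directly from the conic characterization $\calQ = \corr(\CPSD)$ established in Theorem~\ref{thm:correlations}, together with the hypothesis that $\CPSD$ is closed. The strategy is to exhibit $\calQ$ as the continuous image of a compact set, which is the standard way to produce closedness from such projection-based characterizations.

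Concretely, consider the feasible region
$$F := \{X \in \CPSD^N : \langle J_{i,j}, X\rangle = 1, \text{ for all } i,j \in S \cup T\}.$$
Since $\CPSD$ is assumed closed and the constraints $\langle J_{i,j}, X\rangle = 1$ cut out a closed affine subspace, $F$ is closed. The next step, which is the only mildly non-trivial point, is to show $F$ is bounded. For this I would use $\CPSD^N \subseteq \DNN^N$ from (\ref{coneinclusions}), so every $X \in F$ is entrywise nonnegative and psd. For each $i \in S \cup T$, the constraint $\langle J_{i,i}, X\rangle = 1$ forces the sum of all entries in the diagonal block $X_{i,i}$ to equal $1$; combined with entrywise nonnegativity, this bounds each diagonal entry of $X$ by $1$. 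Positive semidefiniteness then yields $|X[u,v]| \le \sqrt{X[u,u]\, X[v,v]} \le 1$ for all off-diagonal entries, so $F$ is bounded, and hence compact.

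Finally, the map $\pi : \calS^N \to \R^{|A \times B \times S \times T|}$ defined by $\pi(X)_{a,b,s,t} := X[(s,a),(t,b)]$ is linear, hence continuous. By Definition~\ref{def:kcorr} and Theorem~\ref{thm:correlations} we have $\calQ = \corr(\CPSD) = \pi(F)$, i.e., $\calQ$ is the continuous image of a compact set and therefore closed. There is no real obstacle here beyond the boundedness verification; the result is essentially a packaging of the observation, already flagged in the paragraph preceding the proposition, that affine slices of $\NN$ given by the $J_{i,j}$ constraints are automatically bounded.
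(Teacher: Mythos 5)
Your proposal is correct and follows essentially the same route as the paper: the paper's (very terse) argument is precisely that the affine slice $\{X \in \calK : \la J_{i,j}, X\ra = 1 \ \forall i,j\}$ is bounded for any $\calK \subseteq \NN$, hence compact when $\calK$ is closed, so $\corr(\calK)$ is the continuous image of a compact set. Your boundedness verification via diagonal blocks and the psd Cauchy--Schwarz bound is slightly more elaborate than needed (entrywise nonnegativity plus $\la J_{i,j},X\ra = 1$ already puts every entry of every block in $[0,1]$), but it is valid.
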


We conclude this section with a {second 
 formulation} for the sets of $\CP, \CPSD$ and $\DNN$-correlations.  {We use these formulations  
  to
  compare  $\corr(\DNN)$    with the first level of the NPA hierarchy} 
and in 
{Section \ref{sect:corr} where we  recover the conic programming formulations for certain quantum graph parameters. 
 
\begin{lemma}
\label{lem:dnnextrarow}
 {Consider a correlation}    $\pabst\in \calP$ and {define the matrix} 
 \[ {P:=\sum_{a,b,s,t} p(a,b|s,t)\ e_se_t^\sfT\otimes e_ae_b^\sfT}. \] 
 For   any cone $\calK\in \{\CP,\CPSD,\DNN\}$ we have that   $\pabst \in \corr(\calK)$ if and only if there exists a matrix 
\be\label{eq:whatever}
\tilde{X}=\begin{pmatrix}
 1 & x^\sfT & y^\sfT \\
 x & X & P\\
 y & P^\sfT& Y
 \end{pmatrix}\in \calK^{1+N}, 
 \ee
  such that
$\la J_{i,j},\tilde{X}\ra=1, \text{ for all } i,j\in \{0\}\cup S\cup T.$ 
\end{lemma}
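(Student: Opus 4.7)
The plan is to handle both directions by reducing everything to the vector / Gram viewpoint captured by Lemma~\ref{lem:vectors}, treating the three cones in parallel.

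For the forward direction, suppose $p\in\corr(\calK)$, witnessed by $X\in\calK^N$. First I pass to an appropriate Gram representation: when $\calK=\CPSD$ write $X=\gram(\{X^s_a\},\{Y^t_b\})$ with Hermitian psd blocks; when $\calK=\CP$ write $X=\gram(\{x^s_a\},\{y^t_b\})$ with entrywise nonnegative vectors; and when $\calK=\DNN$ use that $X$ is psd to get a real Gram representation $X=\gram(\{x^s_a\},\{y^t_b\})$. In every case, the conditions $\la J_{i,j},X\ra=1$ for $i,j\in S\cup T$ combined with Lemma~\ref{lem:vectors}\textit{(a)} supply a distinguished element (call it $K$ or $k$) in the same ambient space satisfying $\la K,K\ra=1$ and $\sum_a X^s_a=\sum_b Y^t_b=K$ (respectively for the vector versions). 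I then set $\tilde X$ to be the augmented Gram matrix obtained by placing $K$ in the new slot indexed by $0$. This automatically gives $\tilde X\in\CPSD^{1+N}$ or $\tilde X\in\CP^{1+N}$ in those two cases, and Lemma~\ref{lem:vectors}\textit{(b)} delivers the extended equalities $\la J_{i,j},\tilde X\ra=1$ for all $i,j\in\{0\}\cup S\cup T$. Finally, the $(S\times A)\times(T\times B)$ block of $\tilde X$ is still $\gram$-computed from the same $x,y$ vectors and hence equals $P$, so the required block shape holds.

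The only case requiring a genuine additional check is $\calK=\DNN$, because $\DNN$ is not defined intrinsically through a Gram representation: I need $\tilde X$ to be both psd and entrywise nonnegative. Psd-ness is immediate since $\tilde X$ is a Gram matrix. For entrywise nonnegativity, the new entries are $\la k,k\ra=1\ge0$, $\la k,x^s_a\ra$, and $\la k,y^t_b\ra$; using $k=\sum_{a'}x^s_{a'}$ we get $\la k,x^s_a\ra=\sum_{a'}X[(s,a'),(s,a)]\ge0$, and symmetrically for $\la k,y^t_b\ra$. So $\tilde X\in\DNN^{1+N}$. This is the step I would flag as the main (albeit mild) obstacle, since it is the one place where the equivalence is not automatic from Lemma~\ref{lem:vectors}.

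For the reverse direction, suppose $\tilde X\in\calK^{1+N}$ of the stated block form exists with $\la J_{i,j},\tilde X\ra=1$ for all $i,j\in\{0\}\cup S\cup T$. Let $X\in\sym^N$ be the principal submatrix obtained by deleting the row and column indexed by $0$. Each of $\CP,\CPSD,\DNN$ is closed under taking principal submatrices: for $\CP$ and $\CPSD$ one simply drops the corresponding vector/matrix from the Gram representation, and for $\DNN$ both psd-ness and entrywise nonnegativity are clearly inherited. Thus $X\in\calK^N$. Since the conditions $\la J_{i,j},X\ra=1$ for $i,j\in S\cup T$ are a subset of the given conditions on $\tilde X$, and since the $(S\times A)\times(T\times B)$ block of $X$ is by construction exactly $P$ so that $X[(s,a),(t,b)]=p(a,b|s,t)$, the matrix $X$ certifies $p\in\corr(\calK)$ per Definition~\ref{def:kcorr}. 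This closes both implications uniformly across the three cones.
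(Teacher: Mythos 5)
Your proof is correct and follows essentially the same route as the paper, which simply invokes the definition of $\corr(\calK)$ together with Lemma~\ref{lem:vectors}~\textit{(b)} on the augmented Gram matrix. You additionally spell out the one point the paper leaves implicit, namely that for $\calK=\DNN$ the new row and column are entrywise nonnegative because $k=\sum_{a'}x^s_{a'}$ makes $\la k,x^s_a\ra$ a sum of entries of the nonnegative matrix $X$; this is a worthwhile detail but not a departure from the paper's argument.
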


\begin{proof} This follows  from the definition of $\corr(\calK)$  combined with  Lemma~\ref{lem:vectors}~\textit{(b)}.
\end{proof}
  
\paragraph{{\bf A spectrahedral outer approximation {for}    quantum correlations.}}
\label{sec:specouterapprox}

In this section we use 
Theorem \ref{thm:correlations} to derive  a new  spectrahedral outer approximation {for}  the set of quantum correlations. Furthermore, we show that our approximation is at least as strong as  the first level of the NPA hierarchy. 

In Theorem \ref{thm:correlations} we showed  that  $\calQ=\corr(\CPSD).$ As  $\CPSD \subseteq \DNN $ we immediately get    a necessary and efficiently verifiable  condition for  membership in the set of quantum~correlations.

\begin{proposition}
\label{cor:dnnupperbound}
For any Bell scenario we have  
$\calQ\subseteq \corr(\DNN)\subseteq \NS.
$
\end{proposition}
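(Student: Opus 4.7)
The plan is to combine the conic characterization of $\calQ$ from Theorem~\ref{thm:correlations} with the monotonicity of $\corr(\cdot)$ under cone inclusion, and then invoke Theorem~\ref{thm:corrktoNS}. Neither step should present any real obstacle, as essentially all the work has been absorbed into the two theorems cited above.

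For the first inclusion $\calQ \subseteq \corr(\DNN)$, I would first recall that Theorem~\ref{thm:correlations} gives $\calQ = \corr(\CPSD)$. Next, I would observe directly from the definition of $\corr(\calK)$ that whenever $\calK_1 \subseteq \calK_2$ are convex cones contained in $\NN$, the implication $p \in \corr(\calK_1) \Rightarrow p \in \corr(\calK_2)$ holds trivially (the same certificate matrix $X$ witnesses both memberships). Applying this monotonicity to the cone inclusion $\CPSD^N \subseteq \DNN^N$ from \eqref{coneinclusions} immediately yields $\corr(\CPSD) \subseteq \corr(\DNN)$, hence $\calQ \subseteq \corr(\DNN)$.

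For the second inclusion $\corr(\DNN) \subseteq \NS$, I would simply apply Theorem~\ref{thm:corrktoNS} with the choice $\calK = \DNN$. Since $\DNN \subseteq \DNN$ trivially, the hypothesis of that theorem is satisfied and we conclude $\corr(\DNN) \subseteq \NS$.

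Combining the two inclusions yields the chain $\calQ \subseteq \corr(\DNN) \subseteq \NS$. The only thing worth emphasizing in the exposition is that $\corr(\DNN)$ is by construction a \emph{spectrahedron} (the feasible set of an affine slice of the psd cone intersected with the nonnegative orthant, projected onto the correlation coordinates), so this chain genuinely gives an efficiently describable outer approximation to $\calQ$ sandwiched inside $\NS$. Since there is no real technical content beyond invoking the two prior theorems, the proof in the paper should occupy at most a couple of lines.
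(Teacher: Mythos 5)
Your proof is correct and matches the paper's reasoning exactly: the paper also obtains the first inclusion from $\calQ=\corr(\CPSD)$ (Theorem~\ref{thm:correlations}) together with monotonicity of $\corr(\cdot)$ under the cone inclusion $\CPSD\subseteq\DNN$, and the second inclusion by applying Theorem~\ref{thm:corrktoNS} with $\calK=\DNN$. The paper in fact states the proposition as an immediate consequence of these facts without a separate proof, so your couple-of-lines estimate is accurate.
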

 
As already mentioned the set of quantum correlations is a non-polyhedral set whose structure is poorly understood.   In \cite{NPA08} Navascu\'es, Pironio and Ac\'in  constructed a hierarchy of spectrahedral  outer approximations to the set of quantum correlations. 
{The 
 mathematical derivation}  of the NPA  hierarchy is  involved and is beyond the scope of this paper. For the precise definition and its properties the reader is referred to \cite{NPA08}. {In this work we only consider  the first level of the NPA hierarchy,  denoted by~${\rm NPA}^{(1)}, $ that we introduce below.}
 
 For this we need the following  definition. 
 For ${\pabst\in \NS}$ we denote by $p_A(a|s)$ Alice's local marginal probabilities for all $a\in A, s\in S$ and by $p_B(b|t)$ Bob's local marginal probabilities for all $b\in B, t\in T$. Note that these are well-defined by the no-signaling conditions  \eqref{ns1} and \eqref{ns2}.

It is useful to arrange the marginal probabilities  in a vector as follows:  
{\be\label{csdnierifnerg}
p_A(s):=\sum_{a\in A} p_A(a|s)e_a\in \R_+^{|A|},  \ \text{ and }  \ p_B(t):=\sum_{b\in B} p_B(b|t)e_b\in \R_+^{|B|}, 
\ee}
{for all $s\in S$ and $t\in T$, respectively, and} 
\be\label{eq:marginals}
p_A:=\sum_{s,a} e_s\otimes p_A(s)
  \in \R_+^{|S \times A|}  
  \text{ and } \ 
 p_B:=\sum_{t,b} e_t\otimes p_B(t) 
 \in \R_+^{|T\times B|}.
\ee
 
\begin{remark}\label{marginals}
{Let $\calK\in \{\CP,\CPSD,\DNN\}$. {Given $\pabst\in \corr(\calK)$} it follows from the proof of Lemma \ref{lem:dnnextrarow}  that   every feasible solution to  \eqref{eq:whatever} satisfies  $\tilde{X}[0, (s,a)]=p_A(a|s), \ $ for all $a\in A,s\in S$ and $\tilde{X}[0,(t,b)]=p_B(b|t), \ $ for all $t\in T,b\in B$. We make use of this fact  in Theorem~\ref{thm:corrnpa}.}   
\end{remark} 

Using the vectors given in \eqref{eq:marginals} we can now give the description of ${\rm NPA}^{(1)}$.
\begin{definition}
\label{def:NPA}
{Consider a correlation   $\pabst\in \NS$ and} {define} \[ P:=\sum_{a,b,s,t} p(a,b|s,t)\ e_se_t^\sfT\otimes e_ae_b^\sfT, \] 
and $p_A,p_B$ as defined  in \eqref{eq:marginals}.  
Then  $p\in {\rm NPA}^{(1)}$ if and only if there exists a matrix    
\be\label{eq:NPAl}
\tilde{X}:=\begin{pmatrix}
 1 & p_A^\sfT & p_B^\sfT \\
 p_A & X & P\\
 p_B & P^\sfT& Y
 \end{pmatrix}\in \pos^{1+N},\ {satisfying:} 
 \ee
 \bi  
 \item[$(i)$] $  X[(s,a),(s,a')]=\delta_{a,a'}\ p_A(a|s),  \text{ for all } s\in S, a,a'\in A,$
\item[$(ii)$] $  Y[(t,b),(t,b')]=\delta_{b,b'}\ p_B(b|t), \text{ for all } t\in T, b,b'\in B. $
\ei
\end{definition} 
{\begin{remark}\label{NPAalternative}
  Using Lemma \ref{lem:vectors} it is easy to verify  that ${\rm NPA}^{(1)}$ can be expressed as  the projection  (onto the blocks that are indexed by  $S\times T$) of the set of matrices in $\pos^{1+\asbt}$ satisfying the following constraints  
\begin{enumerate} 
\item[$(i)$]    $\la J_{i,j}, \tilde{X}\ra  =1$, for all $i, j\in \{0\}\cup S\cup T$,  
\item[$(ii)$] $\tilde{X}[(s,a),(t,b)] \ge 0$, for all $s\in S$, $t\in T$, $a\in A$, $b\in B$, 
\item[$(iii)$] $\tilde{X}[(s,a),(s,a')] =0$, for all $s\in S$, $a\ne a'\in A$,  
\item[$(iv)$] $\tilde{X}[(t,b),(t,b')] =0$, for all $t\in T$, $b\ne b'\in B$.  
\end{enumerate} 
We make use of this fact in Section \ref{sec:dnn}.  
 \end{remark}}
{Our last result in this section is that the set of $\DNN$-correlations is contained in  ${\rm NPA}^{(1)}$. We start with a simple lemma  that we use in the proof.}  

\begin{lemma}
\label{lem:psdlemma}
{Consider 
$x,y \in \R_+^n$ with $\la e,x\ra=\la e,y\ra=1.$ If  the matrix 
$\begin{pmatrix} 1 & x^\sfT \\
x & {\rm Diag}(y)
\end{pmatrix}$ 
is positive semidefinite then we have that  $x=y$.}  
\end{lemma}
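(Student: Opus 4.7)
The plan is to exhibit a single vector on which the quadratic form $v^\sfT M v$ vanishes, and then exploit the standard fact that a positive semidefinite matrix has trivial action on any vector that annihilates its quadratic form. Concretely, let $M := \begin{pmatrix} 1 & x^\sfT \\ x & \mathrm{Diag}(y)\end{pmatrix} \in \calS^{n+1}_+$ and take
\[
v := \begin{pmatrix} -1 \\ e \end{pmatrix} \in \R^{n+1},
\]
where $e$ is the all-ones vector in $\R^n$.

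Using $\la e, x\ra = \la e, y\ra = 1$, a direct computation gives
\[
v^\sfT M v \;=\; 1 \;-\; 2\la e, x\ra \;+\; \la e, y\ra \;=\; 1 - 2 + 1 \;=\; 0.
\]
Since $M \succeq 0$ admits a square root $M^{1/2}$, we have $\|M^{1/2} v\|^2 = v^\sfT M v = 0$, so $M^{1/2} v = 0$, and therefore $Mv = M^{1/2}(M^{1/2} v) = 0$.

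It remains to read off $Mv$: the ``$0$-th'' entry of $Mv$ is $-1 + \la e, x\ra = 0$ (automatic from the normalization), and the remaining block equals $-x + \mathrm{Diag}(y) e = y - x$. Setting this to zero yields $x = y$, as claimed.

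There is no real obstacle here; the only subtlety is observing that the constraints $\la e, x\ra = \la e, y\ra = 1$ are exactly what is needed to make the test vector $v = (-1, e)^\sfT$ a zero of the quadratic form. The nonnegativity of $x$ and $y$ is not even required for the argument, although it is part of the ambient setting.
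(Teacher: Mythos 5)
Your proof is correct, and it reaches the conclusion by a slightly different (and arguably more streamlined) route than the paper. The paper first invokes the Schur complement criterion to reduce positive semidefiniteness of $M$ to positive semidefiniteness of $\mathrm{Diag}(y)-xx^\sfT$, then observes $\la ee^\sfT, \mathrm{Diag}(y)-xx^\sfT\ra = \la e,y\ra - \la e,x\ra^2 = 0$ and uses the kernel fact to conclude $(\mathrm{Diag}(y)-xx^\sfT)e=0$, i.e.\ $y = \la e,x\ra\, x = x$. You instead test the full $(n+1)\times(n+1)$ matrix directly against $v=(-1,e)^\sfT$ and apply the same kernel fact to $M$ itself. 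The two computations are in fact the same calculation in disguise: minimizing $(t,e)^\sfT M (t,e) = t^2 + 2t\la e,x\ra + \la e,y\ra$ over $t$ gives the minimizer $t=-\la e,x\ra = -1$ and the minimum value $\la e,y\ra - \la e,x\ra^2$, which is exactly the Schur-complement quadratic form; your choice of $v$ is that minimizer. What your version buys is the elimination of the Schur complement lemma as a prerequisite — only the elementary fact that $v^\sfT M v = 0$ forces $Mv=0$ for psd $M$ is needed — at the cost of having to guess the right first coordinate of $v$, which the Schur complement route produces automatically. Your closing remark that nonnegativity of $x$ and $y$ plays no role is also accurate; the paper's proof does not use it either.
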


\begin{proof}{By  Schur complements (e.g. see \cite{BTN13}) we have   $\begin{pmatrix} 1 & x^\sfT \\
x & {\rm Diag}(y)
\end{pmatrix}\in \pos^{n+1}$ if and only if ${\rm Diag}(y)-xx^\sfT \in \pos^n$. Note that 
$\la ee^\sfT, {\rm Diag}(y)-xx^\sfT\ra 
=0$. Since ${\rm Diag}(y)-xx^\sfT$ is psd   we get    $({\rm Diag}(y)-xx^\sfT)e=0$ (where we  use the well-known fact that for $X\in \calS^n_+$ we have $x^\sfT Xx=0$ if and only if $Xx=0$). Lastly, as  $\la e,x\ra=~1$, it follows from  the preceding  equality that $x=y$.}
\end{proof}
We are now ready to  prove the last result in this section. 
 
\begin{theorem}
\label{thm:corrnpa}
For any Bell scenario we have that 
$\corr(\DNN)\subseteq {\rm NPA}^{(1)}.$
\end{theorem}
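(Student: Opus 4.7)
The plan is to modify a $\DNN$-certificate of $p\in \corr(\DNN)$ into a matrix witnessing $p \in {\rm NPA}^{(1)}$ via the alternative characterization of Remark~\ref{NPAalternative}, while leaving the $S\times T$ blocks (which carry the correlation $p$) untouched. The only failure of a generic DNN certificate relative to ${\rm NPA}^{(1)}$ lies in the $(s,s)$ and $(t,t)$ diagonal blocks, which in general are not diagonal, so the correction should be supported precisely there.

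First, I would use Lemma~\ref{lem:dnnextrarow} and Remark~\ref{marginals} to fix $\tilde X \in \DNN^{1+N}$ certifying $p$, with row $0$ equal to $(1, p_A^\sfT, p_B^\sfT)$. Let $X_{s,s}$ be the diagonal block of $\tilde X$ indexed by $\{(s,a): a\in A\}$. Since the principal submatrix $\begin{pmatrix}1 & p_A(s)^\sfT \\ p_A(s) & X_{s,s}\end{pmatrix}$ is psd, a Schur complement yields that $X_{s,s} - p_A(s)p_A(s)^\sfT$ is psd, and the identity
\[
\la ee^\sfT, X_{s,s} - p_A(s)p_A(s)^\sfT\ra = \la J_{s,s}, \tilde X\ra - \la e, p_A(s)\ra^2 = 1 - 1 = 0,
\]
together with the fact that $ee^\sfT$ is psd, forces $(X_{s,s} - p_A(s)p_A(s)^\sfT)e = 0$. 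Consequently $X_{s,s}e = p_A(s)$, and symmetrically $Y_{t,t}e = p_B(t)$ for the $(t,t)$ diagonal block $Y_{t,t}$.

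Next, I would define the correction $R_s := \mathrm{Diag}(p_A(s)) - X_{s,s}$ on each $(s,s)$ block and $R_t := \mathrm{Diag}(p_B(t)) - Y_{t,t}$ on each $(t,t)$ block, extended by zero on all other indices. Thanks to $X_{s,s}e = p_A(s)$, the matrix $R_s$ coincides with the graph Laplacian of the entrywise nonnegative symmetric matrix $X_{s,s}$, and is psd via the standard identity $v^\sfT R_s v = \tfrac{1}{2}\sum_{a,a'} X[(s,a),(s,a')](v_a - v_{a'})^2 \ge 0$. Assembling the $R_s, R_t$ with zeros on row/column $0$ and on every off-diagonal block gives a block-diagonal, hence psd, matrix $R$.

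Setting $\tilde X' := \tilde X + R$ produces a psd matrix whose off-diagonal entries in each $(s,s)$ and $(t,t)$ diagonal block vanish while the corresponding diagonals become $p_A(a|s)$ and $p_B(b|t)$, establishing conditions (iii)--(iv) of Remark~\ref{NPAalternative}. Row/column $0$ and the $S\times T$ blocks are unchanged, so $\tilde X'$ retains the border $p_A, p_B$, encodes $p$, and remains entrywise nonnegative on the $S\times T$ blocks. Finally, $\la J_{i,j}, \tilde X'\ra = 1$ is preserved because the only nontrivial check, $\la J_{s,s}, R\ra = e^\sfT R_s e$, equals $e^\sfT p_A(s) - e^\sfT X_{s,s}e = 0$. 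Thus $\tilde X'$ witnesses $p \in {\rm NPA}^{(1)}$. The only technical step is the psdness of the correction $R_s$, which reduces to the standard fact that Laplacians of nonnegatively weighted graphs are psd, so I do not anticipate any serious obstacle.
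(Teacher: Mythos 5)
Your proposal is correct and follows essentially the same route as the paper: the paper also starts from the bordered $\DNN$-certificate of Lemma~\ref{lem:dnnextrarow}, adds the nonnegatively weighted rank-one psd matrices $\tilde{X}[(s,a),(s,a')]E^s_{a,a'}$ to kill the off-diagonal entries of the $(s,s)$ and $(t,t)$ blocks (their sum is exactly your Laplacian $R$), and identifies the resulting diagonal with the marginals via the same Schur-complement/$ee^\sfT$ argument (Lemma~\ref{lem:psdlemma}). The only differences are cosmetic: you package the correction in closed form and verify $X_{s,s}e=p_A(s)$ up front rather than at the end.
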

\begin{proof}Consider a correlation $p\in \corr(\DNN)$. By Theorem \ref{thm:corrktoNS} we have that $p\in \NS$ and thus the marginal probability distributions  $p_A$ and $p_B$ are well-defined. By  Remark \ref{marginals}   there exists
\be\label{eq:extrarowcol1}
\tilde{X}:=\begin{pmatrix}
 1 & p_A^\sfT & p_B^\sfT \\
 p_A & X & P\\
 p_B & P^\sfT& Y
 \end{pmatrix}\in \DNN^{1+N},
 \ee
 satisfying $\la J_{i,j},\tilde{X}\ra=1, \text{ for all } i,j\in \{0\}\cup S\cup T.$
Fix $s\in S$ and $a\ne a'\in A$ and set   $E^s_{a,a'}\in {\pos^{1+N}}$ to be the matrix with entries {$E^s_{a,a'}[(s,a),(s,a)] =1$, $E^s_{a,a'}[(s,a'),(s,a')]=1$,  $E^s_{a,a'}[(s,a),(s,a')]= -1$, $E^s_{a,a'}[(s,a'),(s,a)]=-1$ and $0$ otherwise}. Furthermore, define  
\be\label{eq:newz}
X':=\tilde{X}+\tilde{X}[(s,a),(s,a')]E^s_{a,a'},
\ee
and notice that $X'[(s,a),(s,a')]=0$. Moreover, since     $\tilde{X}\in \DNN^{1+N}$ 
we have   that $X'\in \pos^{1+N}$ and since $\la J_{i,j},E^s_{a,a'}\ra=0$ 
 it follows from  \eqref{eq:newz} that  $\la J_{i,j},X'\ra=1$ for all $i,j\in \{0\}\cup S\cup T$.
Clearly, this argument can be repeated  for all $s\in S, a\ne a'\in A$ and symmetrically  for all $t\in T$ and $ b\ne b'\in B$. In this way we construct a matrix 
\be
Z:=\begin{pmatrix}
 1 & p_A^\sfT & p_B^\sfT \\
 p_A & Z_1 & P\\
 p_B & P^\sfT& Z_2
 \end{pmatrix}\in \pos^{1+N},
 \ee
  satisfying $\la J_{i,j}, Z\ra=1$ for all $i,j\in \{0\}\cup S\cup T, $ 
  $Z_1[(s,a)(s,a')]=0$ for {every} $s\in S, a\ne a'\in A$, and $Z_2[(t,b),(t,b')]=0$ for {every} $t\in T, b\ne b'\in B$. 
It remains to show that $Z[(s,a),(s,a)]=p_A(a|s)$ for all $a\in A, s\in S$   and ${Z}[(t,b),(t,b)]=p_B(b|t)$ for all  $t\in T$ and $b\in B$. For  this,  {fix $ s\in S$} and notice that the principal submatrix of $Z$ indexed by $\{[0,0]\}\cup \{[0,(s,a)]: a\in A\}$ is given by  $\begin{pmatrix} 1 & p_A(s)^\sfT \\p_A(s) & {\rm Diag}(y)\end{pmatrix}\in \pos^{1+|A|}$. Since  $\la y,e\ra=1$ it follows by   Lemma~\ref{lem:psdlemma} that $y=p_A(s)$. {Since the same argument can be repeated for all other diagonal blocks of $Z$,  the proof is concluded.}
\end{proof}

{We do not know if  the containment given in Theorem~\ref{thm:corrnpa} is   strict.
{One 
difficulty in proving the converse inclusion is that for any matrix feasible for \eqref{eq:NPAl}  we do not have control of the signs of the entries in the off-diagonal~blocks.}}
   
\section{Conic programming formulations for  game values}
\label{sec:conicprogrammingformulations}
 
In this section  we study the value of a nonlocal game when   the players  use strategies that generate classical, quantum, no-signaling or  unrestricted correlations. By  Theorem \ref{thm:correlations}, the classical, quantum, no-signaling and unrestricted values 
can be formulated as linear conic programs over appropriate convex cones. Any conic program has an associated dual  which we  derive in our setting   and investigate its properties. This allows us to identify a sufficient condition for showing that  the $\CPSD$ cone is {\em not} closed. Furthermore, we  identify a new SDP upper bound to the quantum value of an arbitrary nonlocal game which we show  is at most the value of the SDP obtained  when we optimize over the first level of the NPA hierarchy.  Lastly, we show that the problem of deciding whether a nonlocal game admits a perfect  $\calK$-strategy is equivalent to deciding the feasibility of a linear conic program over $\calK$.  In particular,  deciding whether a nonlocal game admits a perfect quantum strategy is equivalent to the feasibility of a linear conic program over the $\CPSD$ cone.  

\paragraph{{\bf Primal formulations.}}

Recall that  the maximum probability of winning a game $\calG$   {using $\calK$-correlations~is} given by 
\be\label{generalconic}\tag{$\mathcal{P}_{\calK}$} 
   \omega(\calK, \calG) := \text{sup}\{ \inner{\hat{C}}{X}:  \la J_{i,j},X\ra=1, \text{ for all }\  i,j\in S\cup T,
\ X \in  \calK\},
\ee
where $\hat{C}$ is the symmetric cost matrix defined in \eqref{hatc} {and the matrices $J_{i,j}$ are defined in~(\ref{eq:jmatrix}).}  
 
By Theorem \ref{thm:correlations} the sets of classical, quantum, no-signaling and unrestricted correlations can be expressed as the set of $\calK$-correlations over some appropriate convex cone $\calK\subseteq \NN^N$.  Specifically, we have:
  
\begin{theorem}
\label{cor:gamevalues}
For any nonlocal game $\calG(\pi, V)$  we have: 
\begin{itemize}
\item[(i)]  The classical value $\cvalue$ equal to $\omega(\CP, \calG)$. 
\item[(ii)] The quantum value $\qvalue$ equal to $ \omega(\CPSD, \calG)$. 
\item[(iii)] The no-signaling value $\nsvalue$ equal to $  \omega(\nso, \calG)$.  
\item[(iv)] The unrestricted value $\unresvalue $ equal to $\omega(\NN,\calG)$. 
\end{itemize} 
\end{theorem}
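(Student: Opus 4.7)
The plan is to reduce this theorem to Theorem \ref{thm:correlations} by showing that the objective function $\inner{\hat{C}}{X}$ in \eqref{generalconic} evaluates, on any feasible $X$, to exactly the winning probability of the correlation $p$ extracted from $X$ via $p(a,b|s,t) = X[(s,a),(t,b)]$. Once this identity is established, each of the four claims follows immediately by identifying the correlation set corresponding to the chosen cone.

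\textbf{Step 1: Correspondence between feasible $X$ and correlations.} For any $X$ feasible in \eqref{generalconic} with cone $\calK$, Definition~\ref{def:kcorr} is essentially the statement that the vector with entries $p(a,b|s,t) := X[(s,a),(t,b)]$ lies in $\corr(\calK)$. Conversely, any $p \in \corr(\calK)$ admits a certifying matrix $X$ that is feasible for \eqref{generalconic}. So the feasible set of \eqref{generalconic} projects onto $\corr(\calK)$ via the map $X \mapsto p$.

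\textbf{Step 2: The objective value coincides with the winning probability.} Using the block form of $\hat{C}$ from \eqref{symmetriccostmatrix}, write $X$ in the block partition induced by $S \cup T$, with off-diagonal block $X_{ST}$ of size $|S\times A|\times |T \times B|$. Since $X$ is symmetric we have $X_{TS} = X_{ST}^\sfT$, so
\[
\inner{\hat{C}}{X} = \tfrac{1}{2}\inner{C}{X_{ST}} + \tfrac{1}{2}\inner{C^\sfT}{X_{ST}^\sfT} = \inner{C}{X_{ST}} = \sum_{s,t,a,b} \pi(s,t) V(a,b|s,t)\, p(a,b|s,t),
\]
where the last equality uses the definition \eqref{hatc} of $C$ and $p(a,b|s,t) = X[(s,a),(t,b)]$. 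This expression is precisely the quantity being supremized in \eqref{Svalue} that defines $\omega_{\calS}(\calG)$.

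\textbf{Step 3: Combine with Theorem \ref{thm:correlations}.} Steps 1 and 2 together show that
\[
\omega(\calK, \calG) = \sup\{ \inner{\hat{C}}{X} : X \text{ feasible in } \eqref{generalconic}\} = \omega_{\corr(\calK)}(\calG)
\]
for any cone $\calK \subseteq \NN^N$. Setting $\calK$ equal to $\CP$, $\CPSD$, $\nso$, and $\NN$ in turn, and applying the four equalities $\calC = \corr(\CP)$, $\calQ = \corr(\CPSD)$, $\NS = \corr(\nso)$, $\calP = \corr(\NN)$ from Theorem~\ref{thm:correlations}, yields parts (i)--(iv) respectively.

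There is no genuine obstacle here: the proof is essentially bookkeeping once Theorem~\ref{thm:correlations} is in hand. The only delicate point to double-check is the factor of $\tfrac{1}{2}$ in the definition of $\hat{C}$, which is precisely there so that the symmetrized inner product recovers $\inner{C}{X_{ST}}$ rather than twice that quantity; this ensures the objective value matches the winning probability exactly, not up to a scaling.
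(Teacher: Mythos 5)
Your proof is correct and matches the paper's approach: the paper presents this theorem as an immediate consequence of Theorem~\ref{thm:correlations}, and your Steps 1--3 simply make explicit the bookkeeping (including the role of the factor $\tfrac{1}{2}$ in $\hat{C}$) that the paper leaves implicit. The computation $\inner{\hat{C}}{X} = \inner{C}{X_{ST}}$ is verified correctly.
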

  
Note that  \eqref{kconic} is a linear conic program over the convex cone $\calK$. 
Our next goal is to  apply the  theory of linear conic optimization to \eqref{kconic}  to   understand how the various values relate to each other and to study their properties.      
 
\paragraph{{\bf Dual formulations.}} 
\label{sect:duality}   
The dual conic program associated to  \eqref{generalconic} is given~by:  
\be\label{conicdual}\tag{$\mathcal{D}_\calK$} 
{\xikvalue :=	\text{inf} \bigg\{ \sum_{i,j \in S \cup T} v_{i,j} : 
 \sum_{i,j \in S \cup T} v_{i,j} J_{i,j} - \hat{C} \in \calK^* \bigg\}.}  
\ee 
  
We start  by  analyzing 
the  primal-dual pair of conic programs  ($\mathcal{P}_\calK$) and ($\mathcal{D}_\calK$).  
By  weak duality (cf. Theorem \ref{dualitytheorems} $\textit{(i)}$)  the optimal value of the dual program upper bounds the optimal value of the primal, i.e., for  any  game $\calG$ we have $\kvalue \leq \xikvalue $. For this to hold with equality, it suffices to determine whether  strong duality holds for the primal or the dual (cf. Theorem \ref{dualitytheorems} \textit{(ii)}). 
Note that for  any cone $\calK\subseteq \pos^N $   the primal program  $(\mathcal{P}_\calK)$ is not    strictly feasible. To see this,  
fix  indices $i \in S$, $j \in T$, and define  the (nonzero) psd~matrix 
\[ {M := J_{i,i} + J_{j,j}- 2J_{i,j} \in \pos^N}.\]  
Any matrix $X$ feasible for  $(\mathcal{P}_\calK)$ satisfies  $\inner{M}{X} = 0$ and so  $X\not \in \intr(\calK)\subseteq~\mathcal{S}^N_{++}. $

Also, notice that if the cone $\calK$ is not closed then  we cannot apply strong duality directly  to the primal-dual pair. However, 
as  we now show,  under the additional assumption that $\calK$ is a closed convex cone,  strong duality holds for the primal-dual pair of conic programs ($\mathcal{P}_\calK$) and~($\mathcal{D}_\calK$). 
 
\begin{proposition}\label{thm:strongduality}
Consider  a  game $\calG$ and let $\calK \subseteq \NN$ be a  closed convex cone such that $(\mathcal{P}_\calK)$ is primal feasible. Then we have that   $\kvalue= \xikvalue$ and moreover there exists an optimal solution for~$(\mathcal{P}_\calK)$.  
\end{proposition}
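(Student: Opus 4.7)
The strategy is to invoke the second half of Theorem~\ref{dualitytheorems}~(ii) by showing that the dual program $(\mathcal{D}_\calK)$ is strictly feasible and that the primal is feasible with nonnegative value; this delivers both $\omega(\calK,\calG) = \xi(\calK,\calG)$ and attainment of the primal supremum in a single stroke. The paragraph preceding the proposition already rules out applying the first half of Theorem~\ref{dualitytheorems}~(ii), so the dual side is the only option.

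First, I would exhibit an explicit strictly feasible dual solution. Setting $v_{i,j} := 1$ for every $i,j \in S \cup T$, the matrix $Z := \sum_{i,j \in S\cup T} v_{i,j} J_{i,j} - \hat{C}$ equals the $N\times N$ all-ones matrix minus $\hat{C}$. Each entry of $\hat{C}$ has the form $\tfrac{1}{2}\pi(s,t)V(a,b|s,t) \in [0,\tfrac{1}{2}]$, so every entry of $Z$ lies in $[\tfrac{1}{2}, 1]$, and thus $Z$ sits in the entrywise-positive interior $\intr(\NN^N)$.

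Next I would verify that $\intr(\NN^N) \subseteq \intr(\calK^\ast)$, which promotes $Z$ to a strictly feasible dual point for $(\mathcal{D}_\calK)$. The hypothesis $\calK \subseteq \NN$, combined with the order-reversal of conic duality and the self-duality $\NN^\ast = \NN$, yields $\NN \subseteq \calK^\ast$. Since $\intr(\NN^N)$ is open in $\calS^N$, any small open ball around $Z$ contained in $\NN$ is automatically contained in $\calK^\ast$, and so $Z \in \intr(\calK^\ast)$. This establishes strict feasibility of $(\mathcal{D}_\calK)$.

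Finally, primal feasibility (assumed) together with weak duality (Theorem~\ref{dualitytheorems}~(i)) delivers $\xi(\calK,\calG) \ge \omega(\calK,\calG) \ge 0$: indeed, any feasible $X_0$ for $(\mathcal{P}_\calK)$ satisfies $X_0, \hat{C} \in \NN$, hence $\la \hat{C}, X_0 \ra \ge 0$. In particular $\xi(\calK,\calG) > -\infty$, so Theorem~\ref{dualitytheorems}~(ii) applies to $(\mathcal{D}_\calK)$ and produces the equality $\omega(\calK,\calG) = \xi(\calK,\calG)$ together with attainment of the primal supremum. The argument has no real obstacle; its only ingredient beyond Theorem~\ref{dualitytheorems} is the observation that the ambient inclusion $\calK \subseteq \NN$ leaves the dual cone $\calK^\ast$ large enough that strict feasibility is certified by the trivial entrywise-positivity of $\sum_{i,j} J_{i,j} - \hat{C}$.
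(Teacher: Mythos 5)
Your proposal is correct and follows essentially the same route as the paper: lower-bound the dual value via primal feasibility and weak duality, exhibit a dual point in $\intr(\NN)\subseteq\intr(\calK^*)$ using $\NN=\NN^*\subseteq\calK^*$, and invoke the dual half of Theorem~\ref{dualitytheorems}~(ii). The only cosmetic difference is that you take $v_{i,j}=1$ and bound the entries of $\hat{C}$ by $1/2$ explicitly, where the paper simply takes the $v_{i,j}$ large.
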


\begin{proof} 
Since $(\mathcal{P}_\calK)$ is feasible, the  dual value $\xikvalue$  is bounded below by $0$. It remains  to show that the dual program is strictly feasible for the range of cones we consider.  
Notice that the  program \eqref{conicdual} is strictly feasible for $\calK = \NN$ since $ \intr(\NN)= \set{ X : X[i,j] > 0 \textup{ for all } i,j}$ 
and $\sum_{i,j \in S \cup T} v_{i,j} J_{i,j} - \hat{C} \in \intr(\NN)$
by setting each $v_{i,j}$ to be a very large positive constant. 
Furthermore, for  $\calK \subseteq \NN$ we have  that
$\NN = \NN^* \subseteq \calK^*$ implying $\intr(\NN) \subseteq \intr(\calK^*)$. Thus \eqref{conicdual} is  strictly feasible for all cones  $\calK \subseteq \NN$. The proof is concluded  by  Theorem \ref{dualitytheorems}~\textit{(ii)}.
\end{proof}

{Since $\CPSD^*=({\rm cl}(\CPSD))^*$, we get the following corollary.}  

\begin{corollary}\label{quantumvalueclosure}
For any nonlocal game $\calG$ we have 
$\pvalueclcpsd = \dvaluecpsd.
$
\end{corollary}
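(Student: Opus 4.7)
The plan is to invoke Proposition \ref{thm:strongduality} with $\calK = {\rm cl}(\CPSD)$, which is closed by construction and lies inside $\NN$ since $\CPSD \subseteq \NN$ and $\NN$ is closed. To apply the proposition we need primal feasibility of $(\mathcal{P}_{{\rm cl}(\CPSD)})$, but any classical correlation lies in $\corr(\CP) \subseteq \corr(\CPSD) \subseteq \corr({\rm cl}(\CPSD))$ by Theorem \ref{thm:correlations} and the inclusions \eqref{coneinclusions}, so the primal feasible set is nonempty. Applying Proposition \ref{thm:strongduality} then yields
\[
\pvalueclcpsd = \xi({\rm cl}(\CPSD), \calG).
\]

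Next I would observe that the dual program $(\mathcal{D}_\calK)$ depends on $\calK$ \emph{only through} the dual cone $\calK^\ast$. Using the general fact that $\calK^\ast = ({\rm cl}(\calK))^\ast$ for every convex cone (since $\calK$ and ${\rm cl}(\calK)$ have the same dual), the hypothesis $\CPSD^\ast = ({\rm cl}(\CPSD))^\ast$ flagged just before the corollary guarantees that the feasible set of $(\mathcal{D}_{\CPSD})$ coincides with the feasible set of $(\mathcal{D}_{{\rm cl}(\CPSD)})$. Consequently $\xi({\rm cl}(\CPSD),\calG) = \dvaluecpsd$.

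Chaining the two equalities gives $\pvalueclcpsd = \dvaluecpsd$, which is the desired conclusion.

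There is no real obstacle: the only verification needed is the primal feasibility hypothesis of Proposition \ref{thm:strongduality}, and that follows for free from the existence of (for instance) a deterministic classical strategy. Everything else is a direct unpacking of definitions together with the standard cone-duality identity $\calK^\ast = ({\rm cl}(\calK))^\ast$.
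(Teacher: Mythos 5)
Your proposal is correct and follows essentially the same route as the paper: the paper derives the corollary by applying Proposition \ref{thm:strongduality} to the closed cone ${\rm cl}(\CPSD)$ and then using the identity $\CPSD^\ast = ({\rm cl}(\CPSD))^\ast$ to identify the two dual programs. You have merely spelled out the primal-feasibility check (via a classical correlation) and the containment ${\rm cl}(\CPSD)\subseteq\NN$, which the paper leaves implicit.
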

 
Recall that the $\CPSD$ cone is not known to be closed \cite{LP14,BLP15}.  It follows from  Corollary~\ref{quantumvalueclosure} that   a sufficient condition for showing that the  cone $\CPSD$ is {\em not} closed is to identify  a game $\calG$ 
for which $\pvaluecpsd<\dvaluecpsd$.
 
\paragraph{{\bf The Feige-Lov\'asz SDP relaxation.}}
\label{sec:dnn}  

Notice that the tractability of the conic program \eqref{generalconic} depends on the underlying cone $\calK$. In this section we focus on the case $\calK=\DNN$ for which   \eqref{generalconic} becomes  an instance of a semidefinite  program. Specifically, using the definition of $\corr(\DNN)$ we have:
\begin{equation*}
\omega(\DNN, \calG)= \text{max} \{  \la \hat{C}, X\ra :   \la J_{i,j}, X\ra  =1,  \forall  i,j\in S\cup T, \ 
    X\in  \dnn^{\asbt}\},
 \end{equation*}
 which we define   
  as  a maximization  as the feasible region is~compact.

 {Note that whenever} 
 $\calK_1\subseteq \calK_2$ we have that   $\omega(\calK_1,\calG)\le \omega(\calK_2,\calG)$.  By Proposition \ref{cor:dnnupperbound} it follows  that  $ \omega(\DNN, \calG)$ is an SDP      upper bound to the quantum value of  a nonlocal game, that never exceeds the no-signaling value. 
 
\begin{proposition}\label{thm:sdpupperbound}
For any  game $\calG$  we have   
$\qvalue\le \omega(\DNN,\calG)\le \nsvalue.$
\end{proposition}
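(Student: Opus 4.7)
The plan is to derive both inequalities by exploiting monotonicity of the game value with respect to containment of correlation sets, combined with the inclusions already established in Proposition \ref{cor:dnnupperbound}. Concretely, I will first observe that for any two sets $\calS_1 \subseteq \calS_2 \subseteq \calP$ the definition \eqref{Svalue} immediately yields $\omega_{\calS_1}(\calG) \le \omega_{\calS_2}(\calG)$, since the supremum is taken over a larger feasible set on the right-hand side. This is the only general principle required.

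For the left inequality, I would invoke Theorem \ref{cor:gamevalues}$(ii)$ to rewrite $\qvalue$ as $\omega(\CPSD,\calG)$, i.e.\ as the supremum of $\la \hat{C},X\ra$ over the feasible region of $(\mathcal{P}_{\CPSD})$. The cone inclusion $\CPSD \subseteq \DNN$ from \eqref{coneinclusions} implies that every primal feasible $X$ for $(\mathcal{P}_{\CPSD})$ is also primal feasible for $(\mathcal{P}_{\DNN})$, and the objective is identical. Taking suprema gives $\qvalue \le \omega(\DNN,\calG)$. Equivalently, one can translate through correlations: since $\calQ = \corr(\CPSD) \subseteq \corr(\DNN)$, the monotonicity principle above yields the same conclusion.

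For the right inequality, I would apply Theorem \ref{thm:corrktoNS} with $\calK = \DNN$, giving $\corr(\DNN) \subseteq \NS$. Any feasible $X$ for $(\mathcal{P}_{\DNN})$ produces a correlation in $\corr(\DNN) \subseteq \NS$, and by Definition \ref{def:kcorr} combined with \eqref{hatc} and \eqref{symmetriccostmatrix}, the objective $\la \hat{C},X\ra$ equals the winning probability of the induced correlation in the game $\calG$. Hence every value attained by $(\mathcal{P}_{\DNN})$ is also attained by some no-signaling correlation, and taking suprema produces $\omega(\DNN,\calG) \le \nsvalue$.

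There is no real obstacle here since all the heavy lifting has been done in Theorem \ref{thm:correlations}, Theorem \ref{thm:corrktoNS}, and the cone inclusion chain \eqref{coneinclusions}; the proof amounts to a chain of set inclusions followed by monotonicity of the supremum in \eqref{Svalue} (or equivalently in \eqref{generalconic}). The only point worth being careful about is that the objective functional $\la \hat{C},\cdot\ra$ in the conic formulation and the winning-probability functional in \eqref{Svalue} genuinely agree on matrices of the form specified in Definition \ref{def:kcorr}, which follows directly from the definitions of $\hat{C}$ and $J_{i,j}$.
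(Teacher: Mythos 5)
Your proof is correct and matches the paper's argument: the paper likewise derives both inequalities from the monotonicity $\calK_1\subseteq\calK_2\Rightarrow\omega(\calK_1,\calG)\le\omega(\calK_2,\calG)$ together with the chain $\calQ=\corr(\CPSD)\subseteq\corr(\DNN)\subseteq\NS$ from Proposition~\ref{cor:dnnupperbound}. No issues.
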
 
   
As it turns out,  this SDP  
  was already studied by Feige and Lov\'asz as an upper bound to the {\em classical value}  of an arbitrary  nonlocal game (cf. Equations (5)-(9) in \cite{FL92}).  On the other hand,   Proposition~\ref{thm:sdpupperbound} yields   a much stronger result, namely that   $\omega(\DNN,\calG)$   is in fact an upper bound to the quantum value, so in particular it also upper bounds $\cvalue$.  To the best of our knowledge, prior to this work,  the only known  result relating $\omega(\DNN,\calG)$ 
    with the quantum value is that they are equal for  XOR games~\cite[Theorem 22]{S11}.  
  
We conclude this section  by comparing  $\omega(\DNN,\calG) $ with the maximum probability of winning the game $\calG$  when the players use strategies that generate correlations in  the first level of the NPA hierarchy, denoted by ${\rm SDP}^{(1)}(\calG)$. 
As an immediate consequence of Theorem \ref{thm:corrnpa} we have that:

\begin{proposition}\label{sdpupperbound}
For any game $\calG$ we have that $\omega(\DNN,\calG)\le {\rm SDP}^{(1)}(\calG).$
\end{proposition}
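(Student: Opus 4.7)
The plan is to reduce the inequality to the set inclusion established in Theorem~\ref{thm:corrnpa}. First I observe that $\omega(\DNN,\calG)$, as defined via the conic program (\ref{generalconic}) with $\calK=\DNN$, can be rewritten in terms of correlations. Indeed, by the definition of the symmetric cost matrix $\hat C$ in (\ref{symmetriccostmatrix}) and the defining constraints (\ref{kcorrelations}) of $\corr(\DNN)$, for any feasible $X$ in the conic program we have
\[
\langle \hat C, X\rangle \;=\; \sum_{s,t}\pi(s,t)\sum_{a,b}V(a,b|s,t)\,X[(s,a),(t,b)] \;=\; \sum_{s,t,a,b}\pi(s,t)V(a,b|s,t)\,p(a,b|s,t),
\]
where $p(a,b|s,t)=X[(s,a),(t,b)]$. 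Hence $\omega(\DNN,\calG)=\sup_{p\in\corr(\DNN)}\sum_{s,t,a,b}\pi(s,t)V(a,b|s,t)\,p(a,b|s,t)$.

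Second, by the definition of ${\rm SDP}^{(1)}(\calG)$ given in the paragraph before the proposition, this quantity equals the supremum of the same linear functional, but now over $p\in{\rm NPA}^{(1)}$.

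Third, I invoke Theorem~\ref{thm:corrnpa}, which asserts $\corr(\DNN)\subseteq {\rm NPA}^{(1)}$. Since we are taking suprema of the same linear objective (the winning probability, which depends only on the correlation $p$, not on the matrix witness) over nested feasible sets, the inequality $\omega(\DNN,\calG)\le {\rm SDP}^{(1)}(\calG)$ follows immediately.

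There is essentially no obstacle here: once one has both the conic formulation of $\omega(\DNN,\calG)$ (via Theorem~\ref{thm:correlations}) and the inclusion $\corr(\DNN)\subseteq{\rm NPA}^{(1)}$, the result is a one-line consequence of the monotonicity of supremum with respect to set inclusion. The only point worth double-checking is that the winning probability truly is a function of the correlation $p$ alone and not of the lifted matrix witness; this is transparent from the shape of the objective $\langle\hat C,X\rangle$, which only reads off the off-diagonal $S\times T$ blocks of $X$ corresponding exactly to $p(a,b|s,t)$.
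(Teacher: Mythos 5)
Your proof is correct and is essentially the paper's own argument: the paper states the proposition as an immediate consequence of Theorem~\ref{thm:corrnpa}, i.e., the inclusion $\corr(\DNN)\subseteq{\rm NPA}^{(1)}$ together with the fact that both values are suprema of the same linear functional of the correlation $p$. Your additional verification that $\langle\hat C,X\rangle$ depends only on the off-diagonal $S\times T$ blocks of $X$ is exactly the right detail to check and is consistent with the paper's definitions.
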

\noindent At present, we have not been able to identify a    game  for which this inequality is strict.
{Lastly, by Remark \ref{NPAalternative} it is easy to see  that  ${\rm SDP}^{(1)}$ is equal to:}  
\be\label{sdpkrt}
\begin{aligned}
\text{ maximize} & \quad    \la \hat{C}, X\ra  &  \\ 
   \text{ subject to} & \quad    \la J_{i,j}, X\ra  =1,  \ \text{ for all } i, j\in S\cup T,  \\ 
    & \quad X[(s,a),(t,b)] \ge 0, \  \text{ for  all }\  s\in S,t\in T, a\in A, b\in B,  \\
  &  \quad X[(s,a),(s,a')] =0, \ \text{ for all } s\in S, a\ne a'\in A,\\ 
   &    \quad X[(t,b),(t,b')] =0,\  \text{ for all } t\in T, b\ne b'\in B,\\ 
 &\quad  X \in  \pos^{\asbt}.
 \end{aligned}
 \ee
  
 The SDP  given in  \eqref{sdpkrt} is the  ``canonical'' SDP  relaxation for $\qvalue$ that is usually considered in the quantum information  literature {(e.g. see~\cite{KRT})}.
 
\paragraph{{\bf Perfect strategies.}} 
\label{sect:perfect}

 Recall that    for a convex cone  $\calK \subseteq \NN$ we say  {that}  the game  $\calG(\pi,V)$ admits  a {\em perfect} $\calK$-strategy if 
$\kvalue=1$ and moreover, this value is achieved by some correlation in $\corr(\calK)$ (cf. Definition \ref{perfectkstrategy}). 
Using our conic formulations we show  that deciding the existence of a perfect $\calK$-strategy for an arbitrary nonlocal game  can be cast as a conic program over the cone~$\calK$. 
 
\begin{lemma}\label{thm:perfect_strategy}
 Let $\calG(\pi, V)$ be a  game with question sets $S, T$ and answer sets $A, B$  and let  $\calK \subseteq~\NN^N$. The game  $\calG$ admits  a perfect $\calK$-strategy  if and only if  the  following conic program is feasible: 
{\be\label{perfectstrategies_conicformulation}\tag{$\mathcal{F}_\calK$}
\begin{aligned}
 & X \in \calK, \quad \inner{J_{i,j}}{X} = 1,\  \forall  i,j \in S \cup T,  \\
 & \xasbt = 0, \forall a,b,s,t \text{ with } \pi(s,t) > 0 \text{ and } V({a,b|s,t} )= 0. \\ 
 \end{aligned} 
\ee}
\end{lemma}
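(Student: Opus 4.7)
The claim is a direct translation of the definition of a perfect $\calK$-strategy into a feasibility statement, so the approach is to unpack both sides and observe that the winning-probability-equals-one condition is equivalent to assigning zero probability on every question-answer configuration that the predicate $V$ declares losing. The only non-trivial bookkeeping is that the constraints $\inner{J_{i,j}}{X}=1$ force the corresponding correlation to be a genuine probability distribution over answers for each pair of questions.

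For the forward direction, I would assume $\calG$ admits a perfect $\calK$-strategy. By Definition \ref{perfectkstrategy} this gives a correlation $p\in\corr(\calK)$ attaining $\kvalue=1$. By Definition \ref{def:kcorr}, there is a matrix $X\in\calK$ with $\inner{J_{i,j}}{X}=1$ for every $i,j\in S\cup T$ and $X[(s,a),(t,b)]=p(a,b|s,t)$ for all $a,b,s,t$. Since $\calK\subseteq\NN$, all entries of $X$ are nonnegative, and evaluating $\inner{J_{s,t}}{X}=1$ for $s\in S,t\in T$ gives $\sum_{a,b}p(a,b|s,t)=1$. Combined with the fact that $\sum_{s,t}\pi(s,t)=1$ and $V$ is Boolean, the identity
\[
1 \;=\; \sum_{s,t}\pi(s,t)\sum_{a,b}V(a,b|s,t)\,p(a,b|s,t) \;\le\; \sum_{s,t}\pi(s,t)\sum_{a,b}p(a,b|s,t) \;=\; 1
\]
can only be achieved if $p(a,b|s,t)=0$ for every quadruple with $\pi(s,t)>0$ and $V(a,b|s,t)=0$. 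Hence $X$ satisfies ($\mathcal{F}_\calK$).

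For the converse, I would take any $X\in\calK$ feasible for ($\mathcal{F}_\calK$) and define $p(a,b|s,t):=X[(s,a),(t,b)]$. Then $p\in\corr(\calK)$ by Definition \ref{def:kcorr}, and for each $(s,t)$ with $\pi(s,t)>0$ the vanishing condition $p(a,b|s,t)=0$ on the losing configurations together with $\sum_{a,b}p(a,b|s,t)=1$ (obtained from $\inner{J_{s,t}}{X}=1$ as above) yields $\sum_{a,b}V(a,b|s,t)p(a,b|s,t)=1$. Summing against $\pi$ gives a winning probability of $1$, so $\kvalue\ge 1$; the reverse inequality is immediate from $V\le 1$ and $p$ being a probability distribution. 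Therefore $\kvalue=1$ and the supremum is attained at $p$, which is by definition a perfect $\calK$-strategy.

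\textbf{Main obstacle.} There is no substantive obstacle: the result is essentially an observation. The only place to take care is to verify that the normalization constraints on $p$ used in the ``supremum forces vanishing'' step are actually enforced by the conic constraints $\inner{J_{i,j}}{X}=1$ together with $\calK\subseteq\NN$, which is where the definition of $\corr(\calK)$ interacts non-trivially with the cone assumption.
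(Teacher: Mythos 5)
Your proposal is correct and follows essentially the same route as the paper: both directions reduce to the observation that the winning probability of any $\calK$-correlation is at most $1$, with equality precisely when $p(a,b|s,t)=0$ on every losing configuration with $\pi(s,t)>0$. The paper's proof is just a more compressed version of the same argument.
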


\begin{proof}
For any $\pabst\in \corr(\calK)$ we have that 
\be\label{scefergvw}
{\sum_{s,t} \pi(s,t) \sum_{a, b} V(a,b|s,t) p(a,b|s,t) 
\leq 
\sum_{s,t} \pi(s,t) \sum_{a, b}p(a,b|s,t)
= 1.}
\ee 
{Therefore $\calG$ admits a perfect $\calK$-strategy if and {only if} \eqref{scefergvw} holds throughout with equality  for some   $\pabst\in\corr(\calK)$.}
This is equivalent to  
\[ \pi(s,t) (V(a,b|s,t) - 1) p(a,b|s,t) = 0, \text{ for all } a\in A,b\in B,s\in S,t\in T,\]
which {shows that} 
  $p(a,b|s,t)=0$ when $  \pi(s,t)>0 \text{ and } V(a,b|s,t)=~0.$
\end{proof} 
 
Lemma \ref{thm:perfect_strategy}, combined with  Theorem \ref{thm:correlations} {implies} the following.

\begin{corollary}\label{conicformulation_perfectstrategies}For any nonlocal game $\calG$ we have that:

\bi
\item[(i)] $\calG$ admits a perfect classical strategy if and only if $(\mathcal{F}_{\CP})$ is feasible. 
\item[(ii)] $\calG$ admits a perfect quantum  strategy if and only if $(\mathcal{F}_{\CPSD})$ is feasible. 
\item[(iii)] $\calG$ admits a perfect no-signaling strategy if and only if $(\mathcal{F}_{\nso})$ is feasible. 
\item[(iv)] $\calG$ admits a perfect unrestricted strategy if and only if $(\mathcal{F}_{\NN})$ is feasible. 
\ei
\end{corollary}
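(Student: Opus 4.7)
The plan is to prove each of the four equivalences by reducing to the conjunction of two earlier results: Lemma~\ref{thm:perfect_strategy} (which characterizes perfect $\calK$-strategies in terms of feasibility of $(\mathcal{F}_{\calK})$ for an arbitrary cone $\calK\subseteq \NN^N$) and Theorem~\ref{thm:correlations} (which identifies the four correlation sets $\calC, \calQ, \NS, \calP$ with $\corr(\CP), \corr(\CPSD), \corr(\nso), \corr(\NN)$ respectively). Since each of the four statements is structurally identical, the entire corollary will follow from one template argument applied four times.

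First I would unpack what ``$\calG$ admits a perfect classical strategy'' means at the level of correlations. By Definition~\ref{perfectkstrategy} applied to $\calK=\CP$ combined with Theorem~\ref{cor:gamevalues}~\textit{(i)}, this is equivalent to saying that the supremum in the definition of $\omega_{\calC}(\calG)$ equals $1$ and is attained by some correlation $p\in\calC$. By Theorem~\ref{thm:correlations} we have $\calC=\corr(\CP)$, so admitting a perfect classical strategy in the usual sense is exactly the same as admitting a perfect $\CP$-strategy in the sense of Definition~\ref{perfectkstrategy}. Then Lemma~\ref{thm:perfect_strategy} with $\calK=\CP$ gives the desired equivalence with feasibility of $(\mathcal{F}_{\CP})$. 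This establishes part~\textit{(i)}.

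For parts \textit{(ii)}, \textit{(iii)}, and \textit{(iv)} I would repeat the same two-line argument verbatim: the identifications $\calQ=\corr(\CPSD)$, $\NS=\corr(\nso)$, and $\calP=\corr(\NN)$ from Theorem~\ref{thm:correlations} translate ``perfect quantum / no-signaling / unrestricted strategy'' into ``perfect $\CPSD$- / $\nso$- / $\NN$-strategy'' respectively, and then Lemma~\ref{thm:perfect_strategy} specialized to each of these cones yields the equivalence with feasibility of $(\mathcal{F}_{\CPSD})$, $(\mathcal{F}_{\nso})$, and $(\mathcal{F}_{\NN})$.

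There is essentially no obstacle here: the work has already been done in Theorem~\ref{thm:correlations} and Lemma~\ref{thm:perfect_strategy}, and the corollary is purely a bookkeeping step that combines the cone-by-cone characterization of correlation sets with the cone-independent conic feasibility characterization of perfect strategies. The only minor point worth stating explicitly in the write-up is that the notion of ``perfect $\calK$-strategy'' introduced in Definition~\ref{perfectkstrategy} was specifically designed so that, under the identifications of Theorem~\ref{thm:correlations}, it coincides with the physical notion of a perfect classical, quantum, no-signaling, or unrestricted strategy; once this observation is made, the corollary follows by direct substitution into Lemma~\ref{thm:perfect_strategy}.
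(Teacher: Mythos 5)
Your proposal is correct and is exactly the paper's argument: the authors state the corollary as an immediate consequence of combining Lemma~\ref{thm:perfect_strategy} with Theorem~\ref{thm:correlations}, which is precisely the two-step template you apply to each of the four cones. Nothing is missing.
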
 
 
We conclude  this section  with an  equivalent form of {Corollary}~\ref{conicformulation_perfectstrategies}  which  is used in Section \ref{sec:perfectcorrelated}. {This follows easily using Lemma~\ref{lem:dnnextrarow}.}

\begin{proposition}
\label{thm:conicformulation_perfectstrategies1} 
{For any  $\calK\in \{\CP,\CPSD,\DNN\}$,  a nonlocal game $\calG(\pi,V)$ admits a perfect $\calK$-strategy if and only if  there exists  $\tilde{X}\in \calK^{1+N}$ satisfying:} 
\bi
\item[$\bullet$]$\la J_{i,j},\tilde{X}\ra=1, \text{ for all } i,j\in\{0\}\cup S\cup T, \text{ and }$
\item[$\bullet$] $ \tilde{X}[(s,a),(t,b)]  = 0,  \ \forall  a,b,s,t \text{ with } \pi(s,t) > 0 \text{ and } V({a,b|s,t} )=~0.$
\ei
\end{proposition}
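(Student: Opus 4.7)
The strategy is to combine Corollary~\ref{conicformulation_perfectstrategies}, which characterizes the existence of a perfect $\calK$-strategy in terms of feasibility of the $N$-dimensional program $(\mathcal{F}_\calK)$, with Lemma~\ref{lem:dnnextrarow}, which lifts the description of $\corr(\calK)$ to block-structured matrices in $\calK^{1+N}$. Both directions amount to translating between these two formulations by either embedding an $N\times N$ feasible solution into a $(1+N)\times (1+N)$ ``homogenization'' or conversely extracting a correlation from such a block. The zero pattern prescribed by $(\pi,V)$ lives entirely on the $(S\times A) \times (T\times B)$ block, so it passes through the translation automatically.

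For the forward direction, I would assume $\calG$ admits a perfect $\calK$-strategy. By Corollary~\ref{conicformulation_perfectstrategies}, the program $(\mathcal{F}_\calK)$ is feasible, with some solution $X\in\calK^N$. Setting $p(a,b|s,t):=X[(s,a),(t,b)]$ gives $p\in\corr(\calK)$ by Definition~\ref{def:kcorr}, and Lemma~\ref{lem:dnnextrarow} then produces a matrix $\tilde X\in\calK^{1+N}$ of the block form \eqref{eq:whatever} with $\la J_{i,j},\tilde X\ra = 1$ for all $i,j\in\{0\}\cup S\cup T$. Since the off-diagonal $S$-$T$ block of $\tilde X$ is exactly the matrix $P$ recording $p$, the vanishing of $X[(s,a),(t,b)]$ on losing question/answer pairs transfers directly to the desired zero constraints on $\tilde X$.

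For the reverse direction, I would let $\tilde X\in\calK^{1+N}$ satisfy the two listed conditions and define $p(a,b|s,t):=\tilde X[(s,a),(t,b)]$. Entrywise nonnegativity (from $\calK\subseteq\NN$) together with $\la J_{s,t},\tilde X\ra = 1$ for $s\in S$, $t\in T$ show that $p\in\calP$. The matrix $\tilde X$ now has exactly the block form of Lemma~\ref{lem:dnnextrarow} with $P$ encoding $p$ and all $J_{i,j}$ constraints verified, so the ``if'' direction of that lemma yields $p\in\corr(\calK)$. The zero conditions on $\tilde X$ then force $p(a,b|s,t)=0$ whenever $\pi(s,t)>0$ and $V(a,b|s,t)=0$, and Corollary~\ref{conicformulation_perfectstrategies} delivers a perfect $\calK$-strategy.

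I do not expect a genuine obstacle here: Lemma~\ref{lem:dnnextrarow} already carries the substantive content of the equivalence (namely that adding a row and column indexed by ``$0$'' preserves membership in $\calK$ for each of the three cones $\CP$, $\CPSD$, $\DNN$), so what remains is essentially bookkeeping. The one point requiring mild care is that the vector blocks $x,y$ appearing in \eqref{eq:whatever} are unconstrained in the proposition's statement, so matching the block form in either direction is automatic and no extra verification about them is needed.
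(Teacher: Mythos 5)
Your argument is correct and is exactly the route the paper intends: it proves the proposition by combining the $N\times N$ feasibility characterization of perfect $\calK$-strategies with Lemma~\ref{lem:dnnextrarow} to pass to the bordered $(1+N)\times(1+N)$ matrix, with the zero pattern living entirely in the $(S\times A)\times(T\times B)$ block. The only nit is a citation one: for $\calK=\DNN$ you should invoke Lemma~\ref{thm:perfect_strategy} (stated for general $\calK\subseteq\NN^N$) rather than Corollary~\ref{conicformulation_perfectstrategies}, which only covers $\CP$, $\CPSD$, $\nso$ and $\NN$.
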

 
\section{Synchronous correlations and game values}
\label{sect:corr} 
 
Throughout this  section we only consider  Bell scenarios   where $A=B$ and $S=T$. Given such a scenario  we study  {\em synchronous} correlations, i.e., correlations  with the property that the players respond with   the same answer whenever they receive  the same question. First, we specialize    Theorem~\ref{thm:correlations}  to    synchronous correlations and show   that our conic characterizations  assume a particularly simple form. Based on these simplified characterizations   we study the maximum probability of winning a nonlocal game  
using strategies that generate  synchronous correlations. This allows us to derive  conic programming formulations for deciding  the existence of perfect strategies  for synchronous  nonlocal games. As a corollary,   we  recover  in a uniform manner  the conic programming formulations for deciding the existence of a classical and quantum  graph homomorphisms \cite{R14b} and also  the conic programming formulations for the quantum chromatic and the quantum independence number~\cite{LP14}.

\paragraph{{\bf Synchronous correlations.}}
\label{sec:syncorrelations} We start this section with a central definition.

\begin{definition}\label{synchronous_correlation}
A correlation $p\in \calP$ is called {\em synchronous} if  the players  always respond with   the same answer upon receiving  the same question, i.e., 
\be\label{synchronous}
p(a,a'|s,s)=0, \  \text{ for all } s\in S \text { and } \anea.
\ee
\end{definition} 
  
 Our first result  is a geometric lemma  that is essential in obtaining  simplified conic characterizations for quantum and classical synchronous correlations.  

\begin{lemma}\label{thm:psdvectorlemma} Let $\mcx$ be a Euclidean
space  and  consider two families of  psd matrices  
$\{ X_i : i \in [n] \}\subseteq  \mch_+(\mcx)$ and $\{ Y_i : i \in [n] \}\subseteq \mch_+(\mcx)$ satisfying: 
\begin{itemize} 
\item[(i)] $\sum_{i=1}^n X_i = \sum_{i=1}^n Y_i$, \ and 
\item[(ii)] $\inner{X_i}{Y_{j}} = 0, $\  for all $i\ne j \in [n]$. 
\end{itemize} 
Then we have that $X_i = Y_i\ $ for all $i \in [n]$. 
\end{lemma}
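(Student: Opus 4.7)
The plan is to exploit the key fact that for Hermitian positive semidefinite operators $A,B$, one has $\langle A,B\rangle = \tr(AB) = 0$ if and only if $AB = 0$. I will then use hypothesis $(i)$ to ``promote'' this pointwise orthogonality into an algebraic identity that forces $X_i = Y_i$ via a commutation argument.

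First I would set $M := \sum_{i=1}^n X_i = \sum_{i=1}^n Y_i$ and note that $M \in \mch_+(\mcx)$. From hypothesis $(ii)$ together with the psd-orthogonality fact mentioned above, I obtain $X_i Y_j = 0$ for all $i \neq j$ (and by taking adjoints $Y_j X_i = 0$ too, since all matrices are Hermitian). Summing these over $j \ne i$ and using $(i)$ gives
\[
X_i M = X_i \sum_{j} Y_j = X_i Y_i, \quad \text{and} \quad M Y_i = \sum_j X_j Y_i = X_i Y_i,
\]
which together yield the central identity $X_i M = M Y_i$ for every $i$.

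Next I would split the space as $\mcx = \ker(M) \oplus \mathrm{range}(M)$. Since $0 \le X_i \le M$ (the other $X_j$'s are psd) and likewise $0 \le Y_i \le M$, both $X_i$ and $Y_i$ vanish on $\ker(M)$, so the conclusion $X_i = Y_i$ holds trivially there. On $\mathrm{range}(M)$, the restriction of $M$ is invertible, so the identity $X_i M = M Y_i$ rearranges to $Y_i = M^{-1} X_i M$. Taking the adjoint of this equation (using that $X_i$, $Y_i$, and $M^{-1}$ are all Hermitian) gives $Y_i = M X_i M^{-1}$ as well. Comparing the two expressions yields $X_i M^2 = M^2 X_i$, and since $M$ is a polynomial in $M^2$ (being the unique psd square root), $X_i$ commutes with $M$. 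Substituting back, $Y_i = M^{-1} X_i M = X_i$, completing the proof.

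There is no real obstacle here; the only subtle point is justifying the passage to $\mathrm{range}(M)$, which is handled cleanly by the observation that $X_i, Y_i \le M$ forces both to vanish on $\ker(M)$. The conceptual crux is simply recognizing that the scalar orthogonality $\langle X_i, Y_j\rangle = 0$ upgrades to the operator identity $X_i Y_j = 0$ in the psd setting, after which everything follows from straightforward linear algebra.
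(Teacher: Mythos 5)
Your proof is correct, and it takes a genuinely different route from the paper's. The paper argues by induction on rank: it takes a top eigenvector $v$ of $X_i$, uses $\inner{X_i}{Y_j}=0$ (for psd matrices this forces $Y_jv=0$ for $j\ne i$) to sandwich $v^*X_iv \le v^*Y_iv$ and conversely, concludes that $v$ is a common top eigenvector of $X_i$ and $Y_i$ with the same eigenvalue, subtracts the rank-one piece $\lambda vv^*$ from both, and iterates. You instead upgrade the trace orthogonality to the full operator identity $X_iY_j=0$, sum it against hypothesis $(i)$ to get the intertwining relation $X_iM=MY_i$ with $M:=\sum_j X_j$, and finish by a similarity/commutation argument on the support of $M$ (using that $X_i,Y_i\preceq M$ kills everything on $\ker M$, and that $M$ is a polynomial in $M^2$ so that $X_iM^2=M^2X_i$ implies $X_iM=MX_i$). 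All steps check out: in particular $X_i$ and $Y_i$ do preserve the decomposition $\ker(M)\oplus\mathrm{range}(M)$ since they are Hermitian and annihilate $\ker(M)$, so the restriction to $\mathrm{range}(M)$ is legitimate. Your argument is non-inductive and purely algebraic, at the cost of invoking slightly more machinery (the psd square root as a polynomial in the matrix); the paper's is more hands-on but requires the peeling induction. A small simplification of your ending: taking the adjoint of $X_iM=MY_i$ gives $MX_i=Y_iM$ directly, whence $X_iM^2=MY_iM=M^2X_i$ without passing to the inverse, after which $MY_i=X_iM=MX_i$ gives $M(Y_i-X_i)=0$ and the kernel observation finishes the job.
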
 

\begin{proof}
{Fix 
$i\in [n]$} and let $\lambda$ be the largest eigenvalue of $X_i$ with corresponding (normalized) eigenvector $v$ and let $\mu$ be the largest eigenvalue of $Y_i$. By  condition $\textit{(ii)}$, we know that $Y_{j} v = 0$ for all $j \neq i\in [n]$. Using this, we have 
\be\label{psdvectorlemma} 
\lambda 
= 
v^* X_i v 
\leq 
v^* ( \sum_{j=1}^n X_j ) v 
= 
v^* ( \sum_{j=1}^n Y_j ) v 
= v^* Y_i v
\leq 
\mu, 
\ee
proving that $\mu \geq \lambda$. By symmetry, we also have $\lambda \geq \mu$ proving that $\lambda=\mu$. This shows that  \eqref{psdvectorlemma} holds throughout with equality and thus $v$ is also an eigenvector of $Y_i$  corresponding to eigenvalue $\lambda = \mu$  as well.
Lastly, define   $X_i':=X_i-\lambda vv^*, Y_i':=Y_i-\lambda vv^*$ and for $j\ne i$ set $X'_j:=X_j$ and $Y_j':=Y_j$. Notice that the matrices {$\{ X_i'\}_{i=1}^n\subseteq \mch_+(\mcx)$} and {$\{ Y_i'\}_{i=1}^n\subseteq \mch_+(\mcx)$} satisfy conditions  \textit{(i)} and \textit{(ii)} and  the proof is concluded by an inductive argument. 
\end{proof} 

Based on   Lemma \ref{thm:psdvectorlemma} we now  derive  a second result    that we use in Theorem~\ref{thm:synchronous} below and in our study of  perfect strategies. 
   
\begin{lemma}\label{lemma:main}
Consider a family of vectors $ \sxsa $    in some 
Euclidean space~$\mcx$.
\bi 
\item[(a)] For   $X := \gram(\sxsa)$
the following are equivalent:  
\bi
\item[(i)] There exists $k\in \mcx$ satisfying  $\sum_{a\in A} x^s_a=k$ for all $s\in S$   and $\la k,k\ra=1$. 
\item[(ii)] $ \la J_{s,s'}, X\ra=1,\ $ for all $s,s'\in S$. 
\ei 
\item[(b)] Set $\tX:=\gram(k,\sxsa)$ where  $k\in \mcx$ with $\la k,k\ra=1$.  The following are equivalent:
\bi
\item[(i)] $\sum_{a\in A} x^s_a=k,\ $  for all $s\in S$.
\item[(ii)]  $ \la J_{s,s}, \tX\ra=\la J_{0,s}, \tX\ra=1,\ $ for all $s\in \{0\}\cup S$.
\ei
\ei
\end{lemma}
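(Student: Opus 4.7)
The statement is the synchronous analogue of Lemma~\ref{lem:vectors}, with only one family of vectors $\{x_a^s\}$ instead of two, so I will essentially repeat the same two computations while being careful that the cancellation trick using the norm of a difference still works in this setting.

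For part $(a)$, the implication $(i) \Rightarrow (ii)$ is a direct bilinear expansion: for any $s, s' \in S$,
\[
\la J_{s,s'}, X\ra \;=\; \sum_{a,a'} \la x_a^s, x_{a'}^{s'}\ra \;=\; \la \textstyle\sum_a x_a^s,\ \sum_{a'} x_{a'}^{s'}\ra \;=\; \la k, k\ra \;=\; 1.
\]
For $(ii) \Rightarrow (i)$ I will define $k_s := \sum_a x_a^s$ for each $s \in S$. The hypothesis reads $\la k_s, k_{s'}\ra = 1$ for all pairs $s, s'$, and the standard argument
\[
\la k_s - k_{s'},\ k_s - k_{s'}\ra \;=\; \la k_s, k_s\ra - 2\la k_s, k_{s'}\ra + \la k_{s'}, k_{s'}\ra \;=\; 1 - 2 + 1 \;=\; 0
\]
forces $k_s = k_{s'}$ for all $s, s'$, producing a common vector $k$; setting $s = s'$ gives $\la k, k\ra = 1$.

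For part $(b)$, the direction $(i) \Rightarrow (ii)$ is again a direct calculation: $\la J_{s,s}, \tilde X\ra = \la k, k\ra = 1$ by (i) combined with the already established formula in part $(a)$; and $\la J_{0,s}, \tilde X\ra = \sum_a \la k, x_a^s\ra = \la k, \sum_a x_a^s\ra = \la k, k\ra = 1$, while the case $s = 0$ reduces to $\tilde X[0,0] = \la k, k\ra = 1$. For $(ii) \Rightarrow (i)$, I will apply part $(a)$ to the family $\{x_a^s\}$ (whose Gram matrix is the lower-right block of $\tilde X$) together with the hypotheses $\la J_{s,s'}, \tilde X\ra = 1$ for $s,s' \in S$ (which follow from $J_{s,s}$-equations combined with the $J_{0,s}$-equations if needed --- actually, only the diagonal $s = s'$ case is supplied, so the off-diagonal identities must be obtained differently). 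This is the one subtle point: the hypothesis $(ii)$ in part $(b)$ only mentions $J_{s,s}$ and $J_{0,s}$, not $J_{s,s'}$ for $s \ne s'$. The workaround is to define $k'_s := \sum_a x_a^s$, note that $\la J_{0,s}, \tilde X\ra = \la k, k'_s\ra = 1$ and $\la J_{s,s}, \tilde X\ra = \la k'_s, k'_s\ra = 1$, and then use $\la k, k\ra = 1$ together with $\la k - k'_s,\ k - k'_s\ra = 1 - 2 + 1 = 0$ to conclude $k'_s = k$ for every $s \in S$, as required.

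The only step that needs care is the last one --- noticing that in part $(b)$ we do not have $J_{s,s'}$ for $s \ne s'$ among our hypotheses, so we must route the argument through the common reference vector $k$ rather than through part $(a)$ directly. Beyond this small bookkeeping matter, the proof is a routine pair of bilinear expansions and applications of the polarization identity $\|u - v\|^2 = \|u\|^2 - 2\la u, v\ra + \|v\|^2$.
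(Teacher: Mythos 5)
Your proposal is correct and follows exactly the paper's route: the paper itself omits all cases except $(b)$ $(ii)\Rightarrow(i)$, which it proves by the same expansion $\la k-\sum_a x^s_a,\,k-\sum_a x^s_a\ra = 1-2+1=0$ that you use. The subtle point you flag (that part $(b)$ supplies only the $J_{s,s}$ and $J_{0,s}$ equations, so one must compare each $\sum_a x^s_a$ to the reference vector $k$ rather than invoke part $(a)$ wholesale) is precisely the one case the paper bothers to write out.
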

\begin{proof}{The proof is similar to the proof of Lemma \ref{lem:vectors} and we omit most cases}. 
We only consider case \textit{(b)} and show that  \textit{(ii)} implies \textit{(i)}. Notice that 
\be\label{whatever}
\big\la k-\sum_{a\in A} x^s_a, k-\sum_{a\in A} x^s_a\big\ra =\la k,k\ra-2\sum_{a\in A}\big\la k,x^s_a\big\ra+\big\la \sum_{a\in A}x^s_a, \sum_{a\in A}x^s_a\big\ra.
\ee
By assumption we have that $\la k,k\ra=1, $  $\la J_{0,s}, \tX\ra=\sum_{a}\big\la k,x^s_a\big\ra=1 $
and $\big\la \sum_{a}x^s_a, \sum_{a}x^s_a\big\ra=\la J_{s,s},\tilde{X}\ra=1$. Substituting   in \eqref{whatever} the proof is concluded.
\end{proof}
  
We now  arrive at  the  main result in this section.
 
\begin{theorem}
\label{thm:synchronous}
Consider a  Bell scenario with  question set $S$ and answer set $A$. Furthermore, consider a     {\em synchronous} correlation  $\paass\in \calP$  and  set $P:=\sum_{a,a',s,s'} p(a,a'|s,s')\ e_se_{s'}^\sfT\otimes e_ae_{a'}^\sfT.$  For      every cone  $\calK\in \{\CP,\CPSD\}$ the following are equivalent:
\bi
\item[(i)]  $p\in\corr(\calK)$.
\item[(ii)] $P\in \calK^{|S\times A|}.$
\item[(iii)]  There exists  
$\tilde{X}=\begin{pmatrix}
1 & x^\sfT \\
x & P
\end{pmatrix}\in \calK^{1+|S\times A|}$ { s.t. } 
 $ \la J_{0,s}, \tilde{X}\ra=1, \ \forall s\in \{0\}\cup S$.
\ei
\end{theorem}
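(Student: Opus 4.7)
The plan is to prove the implications $(i)\Rightarrow (ii)\Rightarrow (iii)\Rightarrow (i)$, treating $\calK=\CPSD$ in detail; the $\calK=\CP$ case will go through identically after replacing Theorem~\ref{thm:qcorrelationvector} by Theorem~\ref{thm:ccorrelationvector} and observing that diagonal Hermitian psd matrices correspond under the Gram map to nonnegative vectors, so every Gram decomposition that appears below stays inside the relevant cone.

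For $(i)\Rightarrow(ii)$, I would invoke Theorem~\ref{thm:qcorrelationvector} to obtain psd operators $K,\{X^s_a\},\{Y^s_a\}$ with $\sum_a X^s_a=\sum_a Y^s_a=K$, $\la K,K\ra=1$ and $p(a,a'|s,s')=\la X^s_a,Y^{s'}_{a'}\ra$. Synchronicity forces $\la X^s_a,Y^s_{a'}\ra=0$ for $a\ne a'$, so Lemma~\ref{thm:psdvectorlemma} applied to the two families $\{X^s_a\}_a$ and $\{Y^s_a\}_a$ (for each fixed $s$) yields $X^s_a=Y^s_a$ for all $s,a$. Hence $P=\gram(\{X^s_a\}_{s,a})$, witnessing $P\in \CPSD$.

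For $(ii)\Rightarrow(iii)$, start with a Gram representation $P=\gram(\{X^s_a\})$ with $X^s_a\in \Pos^d$. Since $p$ is a correlation, $\la \sum_a X^s_a,\sum_{a'}X^{s'}_{a'}\ra=\sum_{a,a'}p(a,a'|s,s')=1$ for all $s,s'$, and the usual argument that if $\la v_s,v_{s'}\ra=1$ for all $s,s'$ then $v_s-v_{s'}$ has zero norm shows that $k:=\sum_a X^s_a$ is independent of $s$ and satisfies $\la k,k\ra=1$. The augmented Gram matrix $\tilde X:=\gram(k,\{X^s_a\})$ lies in $\CPSD^{1+|S\times A|}$, has lower-right block $P$, entry $\tilde X[0,0]=1$, and satisfies $\la J_{0,s},\tilde X\ra=\la k,\sum_a X^s_a\ra=1$ for each $s\in S$, as required.

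For $(iii)\Rightarrow(i)$, write $\tilde X=\gram(K,\{X^s_a\})$ in $\Pos^d$. Then $\la K,K\ra=\tilde X[0,0]=1$, $\la J_{0,s},\tilde X\ra=\sum_a\la K,X^s_a\ra=1$, and moreover $\la J_{s,s},\tilde X\ra=\sum_{a,a'}p(a,a'|s,s)=1$ because $p$ is a correlation. Lemma~\ref{lemma:main}(b) then gives $\sum_a X^s_a=K$ for each $s$. Setting $Y^s_a:=X^s_a$ produces a feasible solution to \eqref{eq:qcorrelationvector1} that reproduces $p(a,a'|s,s')=\la X^s_a,Y^{s'}_{a'}\ra$, so Theorem~\ref{thm:qcorrelationvector} combined with Theorem~\ref{thm:correlations} yields $p\in\calQ=\corr(\CPSD)$. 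The main obstacle is the step $(i)\Rightarrow(ii)$: the nontrivial identification $X^s_a=Y^s_a$ is what lets us collapse the ``two-sided'' quantum Gram matrix into the ``one-sided'' matrix $P$, and it is exactly there that the synchronous assumption is essential via Lemma~\ref{thm:psdvectorlemma}. Everything else, including the transition to the $\CP$ case, is bookkeeping that follows once that symmetry is established.
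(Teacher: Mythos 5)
Your proposal is correct and follows essentially the same route as the paper: Theorem~\ref{thm:qcorrelationvector} plus Lemma~\ref{thm:psdvectorlemma} (applied per fixed question $s$) to collapse $Y^s_a$ into $X^s_a$ for $(i)\Rightarrow(ii)$, and Lemma~\ref{lemma:main} for the remaining two implications, with the $\CP$ case handled by the diagonal analogue. The only cosmetic difference is that you inline the argument of Lemma~\ref{lemma:main}(a) in step $(ii)\Rightarrow(iii)$ instead of citing it.
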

 
\begin{proof} We only consider $\calK=\CPSD$ the case  $\calK=\CP$ being  similar. 
 
 $\textit{(i)}\Longrightarrow \textit{(ii)}$:
  Since  $\paass\in \corr(\CPSD)$,  by  Theorem \ref{thm:qcorrelationvector}  there exist psd matrices  $\xsa,$ $\{Y^s_a\}_{s,a},$ $  K\in \pos^d$ (for some $d\ge 1$)  such that $p(a,a'|s,s')=\la X^s_a, Y^{s'}_{a'}\ra$ for all $a,a',s,s', $   $\sum_{a}X^s_a=\sum_{a} Y^s_a=K$ for all $s\in~S$ and $\la K,K\ra=1$. Since  $p$ is synchronous it follows from  Lemma \ref{thm:psdvectorlemma} that $X^s_a=~Y^s_a$ for all $s\in S$ and $a\in A$. This implies that $P\in \CPSD^{|S\times A|}$.
  
   $\textit{(ii)}\Longrightarrow \textit{(iii)}$: Since $P\in \CPSD^{|S\times A|}$  there exist matrices $\xsa \in \pos^d$ (for some $d\ge 1)$  such that $p(a,a'|s,s')=\la X^s_a, X^{s'}_{a'}\ra$ for all $a,a',s,s'$. By Lemma~\ref{lemma:main} \textit{(a)} there exists $K\in \pos^d$ such that $\la K,K\ra=1$ and  $\sum_{a}X^s_a=K$ for all $s\in S$. The proof is concluded by {noticing that  
   $\gram(K,\xsa)$ is feasible for  \textit{(iii)}}. 
   
    $\textit{(iii)}\Longrightarrow \textit{(i)}$: Let  $\tilde{X}\in \CPSD^{1+|S\times A|}$ be feasible for \textit{(iii)} and  
 {consider $K,$ $\xsa\in~\pos^d$ such that    $\tilde{X}=\gram(K,\xsa)$.}  {Since $p\in \calP$ we have that $\la J_{s,s},\tilde{X}\ra=1$, for all $s\in S$.} Thus, by Lemma \ref{lemma:main}~\textit{(b)} we have that
    $\sum_{a} X^s_a=K,$ for all $s\in S$.  Lastly, Theorem~\ref{thm:qcorrelationvector}  {implies  ${p\in \corr(\CPSD)}$}.
\end{proof}
 
{As a consequence of Theorem \ref{thm:synchronous}  we arrive at  the following conic characterization of the sets of synchronous quantum and  classical correlations:}

\begin{corollary}
\label{cor:synchrstrategies}
 Consider a  Bell scenario with  question set $S$ and answer set $A$ and let  $\calK\in \{\CP,\CPSD\}$. The set of synchronous $\calK$-correlations  is given by
$$\{ X\in \calK^{|S \times A|}: \la J_{s,s'},X\ra=1, \text{ for } s,s' \text{ and } X[(s,a),(s,a')]=0,  \text{ for }a\ne a', s\},$$
where we identify a correlation vector $\paass$  with {the square matrix 
\[ P := \sum_{a, a', s, s'} p(a, a'|s, s') \, e_s e_{s'}^{\sfT} \otimes e_a e_{a'}^{\sfT}. \] 
}
\end{corollary}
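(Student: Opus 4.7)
The plan is to derive the corollary as an essentially immediate consequence of Theorem~\ref{thm:synchronous} by using the bijection $p \leftrightarrow P$ to translate the defining properties of a synchronous correlation vector into the linear matrix constraints on the right-hand side. The verification splits naturally into the two inclusions, each of which requires only checking that the ``correlation axioms'' (nonnegativity and normalization) and the synchronicity condition correspond exactly to the asserted matrix conditions once $p$ is encoded by $P$.

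For the forward inclusion, I would start with an arbitrary synchronous $\calK$-correlation $p\in\corr(\calK)$. Theorem~\ref{thm:synchronous}~\textit{(i)}$\Rightarrow$\textit{(ii)} immediately yields $P\in\calK^{|S\times A|}$. The normalization $\la J_{s,s'},P\ra=\sum_{a,a'}p(a,a'|s,s')=1$ holds because $p\in\calP$, and the off-diagonal vanishing $P[(s,a),(s,a')]=p(a,a'|s,s)=0$ for $a\ne a'$ is exactly the definition of synchronicity (cf.\ Definition~\ref{synchronous_correlation}).

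For the reverse inclusion, I would take an $X$ in the right-hand side and define $p(a,a'|s,s'):=X[(s,a),(s',a')]$. Since $\calK\subseteq\NN$, nonnegativity of $p$ is automatic; the constraint $\la J_{s,s'},X\ra=1$ turns into $\sum_{a,a'}p(a,a'|s,s')=1$, so $p\in\calP$; and the block-diagonal vanishing makes $p$ synchronous. With $P=X\in\calK^{|S\times A|}$ in hand, Theorem~\ref{thm:synchronous}~\textit{(ii)}$\Rightarrow$\textit{(i)} then gives $p\in\corr(\calK)$, as desired. There is no substantive obstacle here once Theorem~\ref{thm:synchronous} is in place; the only care needed is to confirm that nonnegativity is supplied automatically by the ambient containment $\calK\subseteq\NN$, so it does not need to appear as an extra constraint on $X$, and that the two listed linear conditions really do capture both ``$p$ is a probability distribution'' and ``$p$ is synchronous.''
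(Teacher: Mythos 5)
Your proposal is correct and matches the paper's treatment: the paper states this corollary as an immediate consequence of Theorem~\ref{thm:synchronous} without further proof, and your argument simply spells out that deduction, translating normalization, nonnegativity (automatic from $\calK\subseteq\NN$), and synchronicity into the listed matrix constraints in both directions. The one point requiring care — that Theorem~\ref{thm:synchronous} is stated only for correlations already known to lie in $\calP$ and to be synchronous, so these must be verified before invoking the implication \textit{(ii)}$\Rightarrow$\textit{(i)} — is handled correctly in your reverse inclusion.
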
 
 
\paragraph{{\bf Synchronous  value.}}
\label{sec:synchvalue}  
{In this section we  study the value of a nonlocal game when the players use strategies that generate  synchronous correlations.} 
\begin{definition}\label{def:ksyncvalue}For any  convex cone $\calK\subseteq \NN^N$,  the {\em $\calK$-synchronous value} of a nonlocal game $\calG$, denoted $\omega_{syn}(\calK,\calG)$,  is defined as  the maximum probability   of winning the game  when the players are 
only allowed to use strategies that generate synchronous  $\calK$-correlations. 
\end{definition}

As we now show by  Corollary \ref{cor:synchrstrategies} we get   a conic programming formulation  for the classical and quantum synchronous value of a nonlocal game with matrix variables of size $|S\times A|$.

\begin{proposition} \label{thm:smallCPs}
Consider a game $\calG$ with question  set $S$ and answer set $A$. For a cone  $\calK\subseteq \DNN^{|S\times A|}$ define   
\begin{align*}
\nu(\calK, \calG) := 	\textup{supremum}\quad & {1\over 2}\inner{C+C^\sfT}{X} \\
  \textup{subject to} \quad  & \inner{J_{s,s'}}{X} = 1, \text{ for all } s,s' \in S, \\
& X[{(s,a),(s,a')}] = 0, \text{ for } s\in S,  \anea, \\ 
& X \in \calK^{|S\times A|}.
\end{align*}  
Then $\omega_{syn}(\CP,\calG)=\nu(\CP, \calG)$ and  
$\omega_{syn}(\CPSD, \calG)=\nu(\CPSD, \calG)$.  
\end{proposition}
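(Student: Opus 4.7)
The plan is to apply Corollary~\ref{cor:synchrstrategies} to identify the feasible region of $\nu(\calK, \calG)$ with the set of matrix representations of synchronous $\calK$-correlations, and then observe that the two objective functions coincide on symmetric matrices. So the bulk of the work has already been done in the preceding sections; what remains is essentially a translation between formulations.

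First I would unpack the success probability in $\omega_{syn}(\calK, \calG)$. For any synchronous correlation $p\in \corr(\calK)$ with associated matrix $P := \sum_{a,a',s,s'} p(a,a'|s,s')\, e_s e_{s'}^\sfT \otimes e_a e_{a'}^\sfT$, the probability of winning $\calG$ on this strategy equals
\[
\sum_{s,s',a,a'} \pi(s,s') V(a,a'|s,s') p(a,a'|s,s') = \inner{C}{P},
\]
by the definition of the cost matrix $C$. Corollary~\ref{cor:synchrstrategies} says that as $p$ ranges over synchronous $\calK$-correlations, the matrix $P$ ranges exactly over matrices in $\calK^{|S\times A|}$ satisfying $\inner{J_{s,s'}}{P}=1$ for all $s,s'\in S$ and $P[(s,a),(s,a')]=0$ for all $s\in S$, $a\ne a'\in A$. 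This is precisely the feasible region of $\nu(\calK,\calG)$, for both $\calK=\CP$ and $\calK=\CPSD$.

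Next I would match the objectives. Since $\CP, \CPSD\subseteq\calS^{|S\times A|}$, every feasible $X$ for $\nu(\calK,\calG)$ is real symmetric, and therefore
\[
\inner{C}{X} = \inner{C^\sfT}{X^\sfT} = \inner{C^\sfT}{X},
\quad\text{so}\quad
\inner{C}{X} = \tfrac{1}{2}\inner{C+C^\sfT}{X}.
\]
Applying this with $X=P$ shows that the objective of $\nu(\calK,\calG)$ at $P$ equals the success probability $\inner{C}{P}$ of the corresponding synchronous $\calK$-correlation. Combined with the identification of feasible regions in the previous paragraph, this yields $\omega_{syn}(\calK,\calG) = \nu(\calK,\calG)$ for both claimed cones.

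I do not anticipate a serious obstacle, as the heavy lifting is already packaged in Theorem~\ref{thm:synchronous} and Corollary~\ref{cor:synchrstrategies}. The only subtle point worth checking is the symmetry of $P$, which is what permits replacing $C$ by the symmetrization $\tfrac{1}{2}(C+C^\sfT)$; but this is automatic because $\calK$ sits inside $\calS^{|S\times A|}$, so the matrix $P$ obtained from a synchronous $\calK$-correlation is symmetric by construction (equivalently, synchronous correlations satisfy $p(a,a'|s,s')=p(a',a|s',s)$ in both the classical and quantum cases).
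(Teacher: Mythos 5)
Your proposal is correct and follows essentially the same route as the paper, which derives the proposition directly from Corollary~\ref{cor:synchrstrategies} (identifying the feasible region with the matrices of synchronous $\calK$-correlations) and remarks that the symmetrized objective ${1\over 2}\inner{C+C^\sfT}{X}$ equals the winning probability. Your explicit verification that the symmetry of $P$ justifies replacing $C$ by its symmetrization is exactly the point the paper's remark is relying on.
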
 

{Note that in the proposition above we use ${1\over 2}\inner{C+C^\sfT}{X}$ as the objective function. This is 
because this is equal to  $\sum_{a,a',s,s'} \pi(s,t) V(a,a'|s,s') \, p(a,a'|s,s')$, which is  exactly the probability Alice and Bob win the game.}  

There are many examples of  games for which the optimal classical strategy generates a synchronous correlation  but
{optimal quantum correlations are not synchronous} 
 (e.g. the CHSH game). This raises the following question: What is the optimal value for such games when one restricts to synchronous  quantum strategies? Perhaps the interesting thing to see is if the power of quantum strategies comes from the fact that the optimal quantum strategies do not need to be synchronous.
The following corollary of Proposition~\ref{thm:smallCPs} gives a partial answer to the above question. It is a consequence  of  the fact that $\CP^n= \CPSD^n = {\DNN}^n$ for any $n \leq 4$~\cite{LP14}.  
 
\begin{corollary} \label{cor:smallgames}
For any nonlocal game $\calG$ with  identical binary question sets (i.e., $S=T$ and $|S| = 2$) and identical binary answer sets (i.e., $A=B$ and $|A| = 2$), we have that the synchronous classical and synchronous  quantum values coincide  and are  expressible as a semidefinite program, i.e., 
\be\label{whateverrrrrrrr}
\omega_{syn}(\CP, \calG) = \omega_{syn}(\CPSD, \calG) = \nu({\DNN}, \calG). 
\ee
\end{corollary}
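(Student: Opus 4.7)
The plan is to invoke Proposition~\ref{thm:smallCPs} and reduce the whole statement to the cone-equality fact $\CP^n=\CPSD^n=\DNN^n$ for $n\le 4$. By Proposition~\ref{thm:smallCPs} we already know that $\omega_{syn}(\CP,\calG)=\nu(\CP,\calG)$ and $\omega_{syn}(\CPSD,\calG)=\nu(\CPSD,\calG)$, so the chain \eqref{whateverrrrrrrr} will follow once we show that the three conic programs $\nu(\CP,\calG)$, $\nu(\CPSD,\calG)$, $\nu(\DNN,\calG)$ have the same optimal value, and that the last one is an SDP.

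The key observation is that the matrix variable $X$ in the definition of $\nu(\calK,\calG)$ lies in $\calK^{|S\times A|}$, and under the hypothesis $|S|=|A|=2$ we have $|S\times A|=4$. By the result of Maxfield and Minc (referenced as~\cite{MM61} in the discussion following \eqref{coneinclusions}) combined with the containments \eqref{coneinclusions}, we have
\begin{equation*}
\CP^4 \;=\; \CPSD^4 \;=\; \DNN^4.
\end{equation*}
Consequently the three programs $\nu(\CP,\calG)$, $\nu(\CPSD,\calG)$, $\nu(\DNN,\calG)$ have identical feasible regions (the only difference between them is the cone membership constraint on $X$) and identical objectives, so they share a common optimal value.

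Finally, to see that this common value is given by a semidefinite program, note that the cone $\DNN^4$ is cut out by the single PSD constraint $X\in\pos^4$ together with finitely many linear inequalities $X[i,j]\ge 0$. All remaining constraints in $\nu(\DNN,\calG)$ are linear (the block-sum conditions $\inner{J_{s,s'}}{X}=1$ and the synchronicity conditions $X[(s,a),(s,a')]=0$ for $a\ne a'$), and the objective $\tfrac{1}{2}\inner{C+C^\sfT}{X}$ is linear in $X$. Hence $\nu(\DNN,\calG)$, and therefore also $\omega_{syn}(\CP,\calG)$ and $\omega_{syn}(\CPSD,\calG)$, is expressible as a semidefinite program, completing the proof.

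There is essentially no obstacle here beyond the cone equality in dimension four; the proof is a direct reduction, with the substantive content entirely supplied by Proposition~\ref{thm:smallCPs} and the classical fact from~\cite{MM61}.
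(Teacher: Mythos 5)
Your proposal is correct and follows exactly the route the paper intends: the corollary is stated there as a direct consequence of $\CP^n=\CPSD^n=\DNN^n$ for $n\le 4$, applied with $|S\times A|=4$ to the programs of Proposition~\ref{thm:smallCPs}. Your additional remarks---that the three programs differ only in the cone-membership constraint, and that $\nu(\DNN,\calG)$ is an SDP because $\DNN^4$ is cut out by one psd constraint plus linear inequalities---simply make explicit what the paper leaves implicit.
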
 

{As an example, consider the CHSH game for which  there exists an optimal classical strategy which is synchronous (Alice and Bob just output $0$) with success probability $3/4$. Then, it follows from Corollary~\ref{cor:smallgames} that the  synchronous  quantum value is also $3/4$. That is, quantum strategies cannot be synchronous to win CHSH with greater probability than classical strategies.} 
 {For games with large question or answer sets, Corollary~\ref{cor:smallgames} does not help. However, Proposition~\ref{thm:smallCPs} implies that $\nu({\DNN},\calG)$ is a tractable upper bound on the synchronous quantum value of~$\calG$.}
  
\paragraph{{\bf Perfect strategies for synchronous  games.}} 
\label{sec:perfectcorrelated}  
{In this section we focus on a class of nonlocal games for which any 
perfect strategy {generates} 
 a synchronous correlation}.
\begin{definition}\label{def:correlatedgame}
A nonlocal game $\calG=(\pi,V)$ is called {\em synchronous} if both  players share the same question set $S$ and  the same answer set $A$,  and
$$V(a,a'|s,s)= 0, \; \text{ for all } s \in S, \anea, \text{ and } \pi(s,s) >0,  \  \text{ for all } s \in S.$$
\end{definition}  
    
{By {Corollary}~\ref{conicformulation_perfectstrategies}},  deciding the existence of a perfect $\calK$-strategy is equivalent to the feasibility of a linear conic program with matrix variables of size {${|(S\times A) \times (T\times B)|}$}.  Moreover,  by  
Lemma \ref{thm:perfect_strategy}, any  perfect strategy for a synchronous game generates a synchronous correlation. 
{Thus we can}  
use  Theorem~\ref{thm:synchronous} to derive a conic program with matrix variables of size $|S\times A|$ whose feasibility is equivalent to the existence of a perfect  $\calK$-strategy.  

\begin{theorem}
\label{thm:conicformperfectstrategies}
Let $\calG(\pi, V)$  be a synchronous game and  $\calK\in \{\CP,\CPSD\}$. The  following are equivalent:
\bi
\item[(i)]$\calG$ admits a perfect $\calK$-strategy.
\item[(ii)]  There exists a matrix  $X\in \calK^{|S\times A|}$ satisfying:
  \begin{itemize}
   \item[$\bullet$] $\la J_{s,s'},X\ra=1, \, \forall s,s', $
   \item[$\bullet$]  $X[(s,a),(s,a')]    = 0, \, \forall s, a \neq a',$
     \item[$\bullet$]  {$X[(s,a),(s',a')]  = 0, 
\, \forall a,a',s,s'  \text{ with }  \pi(s,s') > 0 \text{ and } V({a,a'|s,s'} )=~0.$}
\end{itemize}
\item[(iii)] There exists a matrix $\tilde{X}\in \calK^{1+|S\times A|}$ satisfying:
\begin{itemize}
\item[$\bullet$] $\la J_{s,s},\tilde{X}\ra=\la J_{0,s},\tilde{X}\ra=1, \, \forall s\in \{0\}\cup S,$
   \item[$\bullet$]  $ \tilde{X}[(s,a),(s,a')]    = 0, \, \forall s, a \neq a', $
    \item[$\bullet$] {$X[(s,a),(s',a')]  = 0, 
\, \forall a,a',s,s'  \text{ with }  \pi(s,s') > 0 \text{ and } V({a,a'|s,s'} )=~0.$} 
\end{itemize}
\ei
\end{theorem}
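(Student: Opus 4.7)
The plan is to chain together three results from the paper: Lemma~\ref{thm:perfect_strategy} (which expresses perfect $\calK$-strategies via the zero-pattern of the correlation), the observation that perfect strategies for a \emph{synchronous} game automatically produce synchronous correlations, and Theorem~\ref{thm:synchronous} (which collapses the $N \times N$ conic description of a synchronous correlation into one of size $|S \times A|$ or $1 + |S \times A|$).

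First I would prove $\textit{(i)} \Rightarrow \textit{(ii)}$. If $\calG$ admits a perfect $\calK$-strategy then by Lemma~\ref{thm:perfect_strategy} there is a correlation $p = (p(a,a'|s,s')) \in \corr(\calK)$ with $p(a,a'|s,s') = 0$ whenever $\pi(s,s') > 0$ and $V(a,a'|s,s') = 0$. Since $\calG$ is synchronous, specializing to $s' = s$ gives $p(a,a'|s,s) = 0$ for all $s$ and $a \neq a'$, so $p$ is a synchronous correlation in the sense of Definition~\ref{synchronous_correlation}. Now form the matrix $P = \sum_{a,a',s,s'} p(a,a'|s,s')\, e_s e_{s'}^\sfT \otimes e_a e_{a'}^\sfT \in \calK^{|S \times A|}$, which lives in $\calK$ by Theorem~\ref{thm:synchronous}. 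The three bulleted conditions in (ii) translate directly: $\la J_{s,s'}, P\ra = \sum_{a,a'} p(a,a'|s,s') = 1$ since $p$ is a probability distribution; $P[(s,a),(s,a')] = p(a,a'|s,s) = 0$ for $a \neq a'$ by synchronicity; and the third condition is exactly the perfect-strategy zero constraint from Lemma~\ref{thm:perfect_strategy}.

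Next, for $\textit{(ii)} \Rightarrow \textit{(i)}$, given $X \in \calK^{|S \times A|}$ as in (ii), define $p(a,a'|s,s') := X[(s,a),(s',a')]$. The constraint $\la J_{s,s'}, X\ra = 1$ ensures that $p$ sums to $1$ over $(a,a')$ for each fixed $(s,s')$, entrywise nonnegativity of $X$ (which follows from $\calK \subseteq \DNN \subseteq \NN$) makes $p$ a valid correlation vector, and the middle bullet forces synchronicity. By Theorem~\ref{thm:synchronous} $(ii) \Rightarrow (i)$ we get $p \in \corr(\calK)$, and the last bullet implies via Lemma~\ref{thm:perfect_strategy} that the induced strategy is perfect.

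For the equivalence $\textit{(ii)} \Leftrightarrow \textit{(iii)}$, this is essentially the equivalence of conditions $(ii)$ and $(iii)$ in Theorem~\ref{thm:synchronous} (the characterization with and without an extra border). In the forward direction, given $X$ as in (ii), Theorem~\ref{thm:synchronous} $(ii) \Rightarrow (iii)$ produces a matrix $\tilde{X} \in \calK^{1+|S \times A|}$ of the required block form with $\la J_{0,s}, \tilde{X}\ra = 1$, and Lemma~\ref{lemma:main}(b) combined with the identity $\la J_{s,s}, \tilde{X}\ra = \la J_{s,s}, X\ra = 1$ supplies the remaining normalization $\la J_{s,s}, \tilde{X}\ra = 1$; the two zero-pattern constraints carry over unchanged because $X$ sits in the lower-right block of $\tilde{X}$. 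For the reverse direction, simply restrict $\tilde{X}$ to its $|S \times A|$ lower-right block.

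The bulk of the argument is routine bookkeeping once Theorem~\ref{thm:synchronous} is in hand; the only mildly delicate point is the verification that the border vector $x$ in the $(1+|S \times A|) \times (1+|S \times A|)$ formulation is compatible with both the normalization $\la J_{0,s}, \tilde{X}\ra = 1$ and the $\la J_{s,s}, \tilde{X}\ra = 1$ constraint, which is exactly what Lemma~\ref{lemma:main}(b) was designed to handle.
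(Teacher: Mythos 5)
Your proposal is correct and follows exactly the route the paper intends: the paper omits the proof of this theorem, stating only that it is an easy consequence of Theorem~\ref{thm:synchronous}, and your chaining of Lemma~\ref{thm:perfect_strategy}, the observation that perfect strategies for synchronous games yield synchronous correlations, and Theorem~\ref{thm:synchronous} is precisely that argument. The one place to be slightly more careful is the direction $\textit{(iii)}\Rightarrow\textit{(ii)}$, where ``simply restrict to the lower-right block'' does not by itself give $\la J_{s,s'},X\ra=1$ for $s\neq s'$ --- you must first apply Lemma~\ref{lemma:main}(b) to a Gram representation of $\tilde{X}$ to conclude $\sum_{a}x^s_a=k$ for all $s$, from which the off-diagonal normalizations follow.
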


The proof is omitted as it is an easy consequence of   Theorem \ref{thm:synchronous}. In the next two sections we specialize  Theorem \ref{thm:conicformperfectstrategies} to graph coloring and more generally, graph homomorphism games and derive  conic formulations for the existence of perfect strategies for  these  classes of  games. 
 
\paragraph{{\bf Graph Homomorphisms.}}
\label{sec:homomrph} 
 
Given two undirected graphs $H$ and $G$, 
a {\em graph homomorphism} from $H$ to $G$, denoted $ H\rightarrow  G$,  is  an adjacency preserving map from the vertex set of $H$ to the vertex set of $G$, i.e., a function  ${f: V(H)\rightarrow  V(G)}$ with the property that $f(h)\sim_G f(h')$ whenever $h\sim_H h'$.
Here we study the   {\em $(H,G)$-homomorphism  game} where   Alice and Bob  are trying to convince a referee that there exists a graph homomorphism from  $H$ to  $G$.  To verify their claim the   referee  sends  each player  a vertex of  $H$. Each player responds  with a vertex of  $G$. The player's answers   model a homomorphism $f: H\rightarrow G$, i.e., the answer to a question $h \in V(H)$ should be~$f(h) \in V(G)$.  
 
Formally, in the $(H,G)$-homomorphism  game the players share the same question set ${S:=V(H)}$ and the same answer set $A := V(G)$. 
The distribution  over the question set  is the uniform distribution on $\{(h,h): h\in V(H)\}\cup \{(h,h'): h\sim_H h'\}$. Lastly, the verification predicate is given~by 
$$V(g,g'|h,h')=\begin{cases} \; 0, \quad  \text{ if } h=h' \text{ and }g\ne g,'\\
\; 0, \quad \text{ if } h\sim_H h'\text{ and } (g\not\sim_G g \text{ or } g=g'),\\
\; 1,  \quad \text{ otherwise}.
\end{cases}$$

Notice  that the existence of a  homomorphism  $H\rightarrow G$ can be understood via the  $(H,G)$-homomorphism game. Specifically, there exists a graph homomorphism from $H$ to $G$ if and only if  the  $(H,G)$-homomorphism game admits a perfect  classical strategy. This is easy to see  using the fact  that in the classical setting  we may assume without loss of generality  that both players are using deterministic strategies.  This motivates the following definition.

\begin{definition} 
For  graphs $G$ and $H$, we say there exists a {\em quantum graph homomorphism} from  $H$ to  $G$, denoted $H\overset{q}{\rightarrow} G$,  if the $(H,G)$-homomorphism game admits a perfect quantum strategy. 
\end{definition} 

Quantum graph homomorphisms were  introduced  recently in~\cite{MR14b}. 
Using our conic characterizations for  the sets of quantum and classical correlations we arrive at  a natural conic generalization of the notion  of graph~homomorphism.

\begin{definition}For a convex cone $\calK\subseteq \NN$ we say that there exists a {\em $\calK$-homomorphism} from $H$ to $G$ if and only if the $(H,G)$-homomorphism game admits a perfect $\calK$-strategy. 
\end{definition} 

By Lemma \ref{thm:perfect_strategy},  the existence of a $\calK$-homomorphism from $H$ to $G$ is equivalent to the feasibility of a linear conic program over  $\calK$. The similar notion of {\em strong $\calK$-homomorphism}   was  introduced recently  in \cite{R14b}. Since the $(H,G)$-homomorphism game is synchronous  we can use the conic formulations from  Theorem \ref{thm:conicformperfectstrategies} to show  that  the two notions of conic homomorphisms coincide  for  $\calK\in \{\CP,\CPSD\}$. We note that    strong $\calK$-homomorphisms are only defined for a certain class of convex cones called {\em frabjous}. Working over frabjous cones  ensures that the   $\calK$-homomorphism relation is reflexive and transitive, mimicking  classical graph homomorphisms.
  
As an immediate consequence of   Theorem \ref{thm:conicformperfectstrategies} \textit{(ii)} it follows  that deciding the existence of a classical (resp. quantum) graph homomorphism can be formulated  as a feasibility  conic program over the cone of completely positive (resp. completely positive semidefinite) matrices. 
\begin{corollary}\label{thm:conichomomor}
Consider  two 
graphs  $H$ and $G$ and let  $\calK\in \{\CP,\CPSD\}$. The $(H,G)$-homomorphism game admits a perfect $\calK$-strategy if and only if there exists $X\in \calK^{|V(H)\times V(G)|}$  such that 
{\bi
  \item[$\bullet$] $\sum_{g \in V(G)} \sum_{g'\in V(G)} X[(h,g),(h',g')] =1, \text{ for all } h,h'\in  V(H), \text{ and }$ 
 \item[$\bullet$]  $X[(h,g),(h',g')] = 0,  \text{ when } 
(h=h' \text{ and } g\ne g') \text{ or } (h\sim_H h' \text{ and } g\not \sim_G~g').$
\ei} 
\end{corollary}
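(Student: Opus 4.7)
The plan is to derive this corollary as a direct specialization of Theorem~\ref{thm:conicformperfectstrategies} to the $(H,G)$-homomorphism game. First I would verify that the $(H,G)$-homomorphism game is synchronous in the sense of Definition~\ref{def:correlatedgame}: both players share question set $S:=V(H)$ and answer set $A:=V(G)$; the uniform distribution on $\{(h,h):h\in V(H)\}\cup\{(h,h'):h\sim_H h'\}$ clearly puts positive mass on every diagonal pair $(h,h)$; and by inspection of the verification predicate, $V(g,g'|h,h)=0$ whenever $g\ne g'$. Hence Theorem~\ref{thm:conicformperfectstrategies} applies and the existence of a perfect $\calK$-strategy is equivalent to the feasibility of its condition~\textit{(ii)} with matrix variable~$X\in\calK^{|V(H)\times V(G)|}$.

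Next I would translate each of the three bullet constraints of Theorem~\ref{thm:conicformperfectstrategies}~\textit{(ii)} into the statement of the corollary. The constraint $\la J_{s,s'},X\ra=1$ for all $s,s'\in S$ is, unpacking the definition of $J_{s,s'}$ in~\eqref{eq:jmatrix}, exactly the first bullet $\sum_{g,g'\in V(G)}X[(h,g),(h',g')]=1$ for all $h,h'\in V(H)$. The constraint $X[(s,a),(s,a')]=0$ for all $s$ and $a\ne a'$ becomes $X[(h,g),(h,g')]=0$ for all $h\in V(H)$ and $g\ne g'\in V(G)$, which is the $h=h'$ case in the second bullet of the corollary.

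It remains to translate the third bullet, which requires pinpointing the pairs $(h,h',g,g')$ with $\pi(h,h')>0$ and $V(g,g'|h,h')=0$. Since $\pi(h,h')>0$ iff $h=h'$ or $h\sim_H h'$, I would analyze the two cases separately. The case $h=h'$ forces $g\ne g'$ for the predicate to vanish, which is already subsumed by the constraint from the preceding paragraph. The case $h\sim_H h'$ yields $V(g,g'|h,h')=0$ precisely when $g\not\sim_G g'$ (noting that $g=g'$ gives $g\not\sim_G g'$ under the standard convention of loopless graphs, so this sub-case is absorbed into the non-adjacency condition). Combining both cases then gives exactly the second bullet of the corollary: $X[(h,g),(h',g')]=0$ whenever $(h=h'$ and $g\ne g')$ or $(h\sim_H h'$ and $g\not\sim_G g')$.

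There is no real obstacle here — the proof is a routine bookkeeping exercise once the game has been recognized as synchronous. The only mild subtlety worth flagging in the write-up is the handling of the $g=g'$ sub-case under $h\sim_H h'$, which is why the statement of the corollary merges the zero constraints into a single bullet rather than listing them separately as in Theorem~\ref{thm:conicformperfectstrategies}.
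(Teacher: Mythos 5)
Your proposal is correct and matches the paper's approach: the paper states this corollary as an immediate consequence of Theorem~\ref{thm:conicformperfectstrategies}~\textit{(ii)} applied to the $(H,G)$-homomorphism game, which is exactly the specialization and bookkeeping you carry out (including the correct absorption of the $g=g'$ sub-case into $g\not\sim_G g'$ for loopless graphs).
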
 

{We note that the case $\calK=\CP$ (resp. $\calK=\CPSD$) corresponds to  Theorem~4.1 in \cite{R14b}  (resp.  Theorem 4.3 in \cite{R14b}).}
   
\paragraph{{\bf Chromatic and independence number.}}
\label{sec:chromatic} 
 
A {\em $k$-coloring} for a graph $G$ corresponds to an assignment of  one out of $k$ possible colors to its vertices  so that adjacent vertices receive different colors.    
The {\em chromatic number} of a graph $G$, denoted $\chi(G),$ is equal to the smallest integer $k\ge1$ for which $G$ admits a $k$-coloring.  
Notice that $G$ admits a $k$-coloring if and only if there exists  a  homomorphism from $G$  into $K_k$, i.e., the complete graph on $k$ vertices. Thus,  $\chi(G)$ may be equivalently defined as the smallest  $k\ge 1$ for which the $(G,K_k)$-homomorphism game  admits a perfect classical strategy. {The {\em quantum chromatic number} of a  graph $G$, denoted $\chi_q(G)$,  is equal to the smallest  $k\ge 1$ for which  the $(G,K_k)$-homomorphism game admits a perfect \emph{quantum} strategy.}

The {\em independence number} of a   graph $G$, denoted $\alpha(G)$,  is equal to the largest number of pairwise nonadjacent vertices of $G$. Notice that $G$ contains $k$ pairwise nonadjacent vertices if and only if there exists a homomorphism from $K_k$ into  $\overline{G}$, where $\overline{G}$ denotes the complement of the graph $G$. As a result,   $\alpha(G)$ can be equivalently defined as the largest  integer $k\ge 1$ for which the $(K_k, \overline{G})$-homomorphism game admits a perfect classical strategy. {Analogously, the {\em quantum  independence number} of  $G$, denoted $\alpha_q(G)$,  is the largest   $k\ge 1$ for which the $(K_k, \overline{G})$-homomorphism game admits a perfect \emph{quantum} strategy.}

The quantum chromatic number was introduced and studied in \cite{Cameron07} and the quantum independence number in \cite{ML15}.  It was recently shown that deciding whether the quantum chromatic number of a graph is at most~$3$ is NP-hard~\cite{Ji13}.
 
Using Theorem \ref{thm:conicformperfectstrategies}  \textit{(iii)}  we immediately get  conic programming formulations  for  the quantum chromatic number and the quantum independence number of a graph. We note that these formulations  were also identified  in Proposition 4.10  and  Proposition 4.1 in \cite{LP14}, respectively. 
 
\begin{corollary} The quantum chromatic number of a 
graph $G$ is equal to the smallest integer $k\ge 1$ for which {there exists 
$X    \in \CPSD^{|V(G)| k+1}$ satisfying:}  
{\bi 
 \item[$\bullet$] $X[0,0] =1$;
  \item[$\bullet$]  $\sum_{i,i'\in [k]} X[(g,i),(g,i')] =\sum_{i\in [k]} X[0,(g,i)]=1,\text{ for all }   g\in V(G)$;
  \item[$\bullet$] $X[(g,i),(g',i')]= 0,  \text{ when } 
(g=g' \text{ and } i\ne i' ) \text{ or } (g\sim g' \text{ and } i=i')$.
\ei}
 
\noindent The quantum independence number of a  
graph $G$ is equal to the largest integer $k\ge 1$ for which there exists a matrix    $X    \in \CPSD^{k |V(G)|+1}$ satisfying:
{\bi 
  \item[$\bullet$] $X[0,0] =1$;
 \item[$\bullet$]   $\sum_{g,g'\in V(G)} X[(i,g),(i,g')] =\sum_{g\in V(G)} X[0,(i,g)]=1, \ \text{ for all } i\in [k]$;
    \item[$\bullet$] $X[(i,g),(i',g')]  = 0, 
  \text{ when }  (i=i' \text{ and } g\ne g') \text{ or }
  {(i\ne i'  \text{ and } g\simeq g')}. $
\ei} 
\end{corollary}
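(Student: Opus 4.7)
The plan is to obtain both formulations as direct specializations of Theorem~\ref{thm:conicformperfectstrategies}~\textit{(iii)} (or equivalently of Corollary~\ref{thm:conichomomor}) applied to the appropriate homomorphism games, using $\CPSD$ as the cone. By definition, $\chi_q(G)\leq k$ if and only if the $(G,K_k)$-homomorphism game admits a perfect quantum strategy, and $\alpha_q(G)\geq k$ if and only if the $(K_k,\overline{G})$-homomorphism game admits a perfect quantum strategy. In both cases the game is synchronous (since the uniform distribution on the diagonal has full support on $\{(h,h):h\in V(H)\}$ and the verification predicate vanishes on unequal outputs to equal inputs), so the hypotheses of Theorem~\ref{thm:conicformperfectstrategies} are satisfied.

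For the quantum chromatic number, I take $S=V(G)$ and $A=[k]$ and invoke Theorem~\ref{thm:conicformperfectstrategies}~\textit{(iii)} to obtain an $X=\tilde X\in\CPSD^{1+|V(G)|k}$. The constraints $\langle J_{0,0},\tilde X\rangle=1$, $\langle J_{g,g},\tilde X\rangle=1$, and $\langle J_{0,g},\tilde X\rangle=1$ unfold precisely to $X[0,0]=1$, $\sum_{i,i'\in[k]}X[(g,i),(g,i')]=1$, and $\sum_{i\in[k]}X[0,(g,i)]=1$. The diagonal block zeros $X[(g,i),(g,i')]=0$ for $i\neq i'$ come directly from the second bullet of Theorem~\ref{thm:conicformperfectstrategies}~\textit{(iii)}. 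It remains to translate the final bullet, namely $X[(g,i),(g',i')]=0$ whenever $\pi(g,g')>0$ and $V(i,i'|g,g')=0$, into graph-theoretic language. Only pairs $(g,g')$ with $g=g'$ or $g\sim_G g'$ have $\pi(g,g')>0$; the case $g=g'$ is already covered by the diagonal block conditions, and for $g\sim_G g'$ the verification predicate of the homomorphism game (using that in $K_k$ adjacency means distinctness) gives $V(i,i'|g,g')=0$ exactly when $i=i'$. This yields the stated condition.

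For the quantum independence number the argument is entirely parallel, but with $S=[k]$ and $A=V(G)$ and with the target graph $\overline{G}$. The inner product constraints $\langle J_{0,0},\tilde X\rangle=\langle J_{i,i},\tilde X\rangle=\langle J_{0,i},\tilde X\rangle=1$ produce the three scalar/summation conditions in the statement, and the diagonal-block zero constraint gives $X[(i,g),(i,g')]=0$ for $g\neq g'$. The off-diagonal condition requires unpacking the predicate for the $(K_k,\overline{G})$-homomorphism game: for $i\neq i'$ (so $i\sim_{K_k}i'$), $V(g,g'|i,i')=0$ iff $g\not\sim_{\overline{G}}g'$ or $g=g'$, i.e., iff $g=g'$ or $g\sim_G g'$, which is the abbreviation $g\simeq g'$ used in the statement.

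The whole argument is thus a routine unpacking of Theorem~\ref{thm:conicformperfectstrategies}~\textit{(iii)} once one has identified the correct homomorphism game and carefully translated each occurrence of $V(a,a'|s,s')=0$ into the corresponding adjacency/equality condition in $G$ (respectively in $\overline{G}$). The only point that might require a moment's care is the bookkeeping in the independence-number case, where the complementation can briefly obscure which pairs $(g,g')$ produce a losing output; tracking $g\sim_{\overline{G}}g'\Leftrightarrow g\neq g'\text{ and }g\not\sim_G g'$ and negating resolves this.
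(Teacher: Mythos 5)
Your proposal is correct and follows exactly the route the paper intends: the paper states the corollary as an immediate specialization of Theorem~\ref{thm:conicformperfectstrategies}~\textit{(iii)} to the $(G,K_k)$- and $(K_k,\overline{G})$-homomorphism games, and your unpacking of the $J$-constraints and of the predicate (including the $g\simeq g'$ bookkeeping under complementation) supplies precisely the omitted details.
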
 
 
\medskip 

\noindent{\bf Acknowledgements.} {The authors would like to thank the referees for carefully reading the paper and for their useful comments.}  Furthermore, the authors thank S. Burgdorf, M. Laurent, L. Man\v{c}inska, T. Piovesan,  D. E. Roberson, S. Upadhyay,  T. Vidick and Z. Wei  for useful discussions.  AV is supported in part by the Singapore National Research Foundation under NRF RF Award No. NRF-NRFF2013-13. JS is supported in part by NSERC Canada.  Research at the Centre for Quantum Technologies at the National University of Singapore is partially funded by the Singapore Ministry of Education and the National Research Foundation, also through the Tier 3 Grant ``Random numbers from quantum processes,'' (MOE2012-T3-1-009). 
 
\nocite{CHSH69}
\nocite{BLP15}
\nocite{BTN13} 
\nocite{BSM03}  
\nocite{Dur10}  
\nocite{FL92}
\nocite{Ji13}
\nocite{R14b} 
\nocite{S11}
\nocite{ML15}  
\nocite{NC00}
\nocite{MRV14} 
\nocite{MM61}
\nocite{CSUU08}
\nocite{DSV14}
   
\bibliographystyle{abbrv}
\bibliography{biblio}
 
\appendix 

\section{Constraint satisfaction problems} 
    
It is natural to ask what kinds of combinatorial problems admit linear conic formulations of a similar form. In  this appendix  we show that all  examples considered  in this work  can be cast in the common framework of binary  constraint satisfaction problems. 
  
An instance of a {\em constraint satisfaction problem} (CSP) is specified  by a triple   $(\mcv, \mathcal{D}, \mathcal{C} )$ where the elements of     $\mcv=\{x_1,..,x_n\}$ are called the  {\em variables} of the CSP,  the elements of $\mathcal{D}=\{D_1,\ldots,D_n\}$ are the {\em domains} of the corresponding variables and the elements of $\calC=\{C_1,\ldots, C_m\}$ are called the  {\em constraints} of the CSP. Each constraint $C_i$ involves a subset of variables $\{ x_{i_1},\ldots,x_{i_{t_i}}\}\subseteq \mcv$  and is defined as some   $t_i$-ary relation on $D_{i_1}\times \cdots \times D_{i_{t_i}}$. The number of variables $t_i$  is called the {\em arity} of the constraint $C_i$.  We say that a  CSP  is {\em satisfiable} if there exists an  assignment of  values to each  variable from its corresponding domain so that every  constraint is satisfied.  A CSP that only involves  constraints of arity $2$ is called a {\em binary} CSP.
 
Deciding the  existence of  a  homomorphism from a graph $H$ to a graph $G$ can be formulated  as an  instance of a binary CSP. Specifically,  we have  one variable for each vertex of $H$ and the  domain of each variable is the vertex set  of $G$. Lastly,  for every edge $e=(h,h')\in E(H)$ we have a constraint $C_e$ of arity  2 involving the variables $h$ and $h'$; the constraint  is given by $C_e=\{E(G)\}$. 
  
To any binary  constraint satisfaction problem   $\calP := (\mcv, \mathcal{D}, \mathcal{C} )$ we may associate a two-player nonlocal game, denoted $\calG(\calP)$,  having the  property that the CSP  is satisfiable if and only if 
  the game admits a perfect classical strategy. The game is defined  as follows: The referee selects  uniformly at random a pair of variables $(x_i,x_j)\in \mcv\times \mcv$ and sends $x_i$ to Alice and $x_j$ to Bob. For the players to win they need to respond to the referee with an element of $D_i$ and $D_j$, respectively. Furthermore, if there exists some constraint $C_k$ that involves the variables $x_i$ and $x_j$ then the  answers of the players must  satisfy the constraint. Lastly, if the players receive the same variables as questions  they have to provide identical answers ensuring that  the game is synchronous.
    
\begin{definition}
{A binary constraint satisfaction problem  $\calP$  is called}  {\em quantumly satisfiable} if the  nonlocal game $\calG(\calP)$   admits a perfect quantum strategy.
\end{definition} 
   
Notice that the notion of quantum satisfiability of binary CSP's generalizes  the concept  of quantum graph homomorphisms. Indeed, it is immediate from the definitions that there exists a quantum graph homomorphism from  $H$ to $G$ if and only if the nonlocal game corresponding to the homomorphism  CSP  admits a perfect quantum strategy. 
  
The majority of the literature concerning CSP's usually focuses on   binary CSP's. The reason for this is that   any non-binary CSP  $\calP$ can be converted to  a  binary CSP $\calP'$ such   that $\calP$ is satisfiable if and only if $\calP'$ is satisfiable. The transformation is straightforward: For each constraint $C_i$ of $\calP$ we introduce  one variable $c_i$ in  $\calP'$.  The domain of the variable $c_i$ is  given by all  assignments that satisfy   the constraint $C_i$ in $\calP$. Lastly, for every two constraints $C_i, C_j$ of $\calP$ that share a variable $x_k$  we add a binary constraint between the variables $c_i,c_j$  in $\calP'$. This constraint excludes those  satisfying assignments for $C_i$ and $C_j$  where the common variable $x_k$ receives   different values.  
  
The discussion above allows us to generalize the notion of quantum satisfiability from binary CSP's  to arbitrary ones.  Combining this  fact with  Theorem~\ref{thm:conicformperfectstrategies} we get the following corollary. 
 
\begin{corollary}Deciding whether an arbitrary constraint satisfaction problem is satisfiable (resp. quantumly satisfiable) is equivalent to deciding the feasibility of a linear conic program over $\CP$ (resp. $\CPSD$).  
\end{corollary}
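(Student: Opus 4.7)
The plan is to assemble the ingredients already set up in the paper. Given an arbitrary CSP $\calP$, first apply the standard reduction described just before the corollary to produce a binary CSP $\calP'$ such that $\calP$ is satisfiable iff $\calP'$ is satisfiable. The same reduction must be used for the quantum case: we would \emph{define} $\calP$ to be quantumly satisfiable iff $\calP'$ is, extending Definition~A.1 beyond the binary setting, and this is precisely the generalization the paragraph preceding the corollary proposes.

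Next, to the binary CSP $\calP'$ attach the nonlocal game $\calG(\calP')$ constructed in the appendix. By the construction, Alice and Bob share the same question set $\mcv$ and, whenever they receive identical variables, the predicate forces them to return identical answers; hence $\calG(\calP')$ is a synchronous game in the sense of Definition~\ref{def:correlatedgame} (with uniform $\pi$ so that $\pi(s,s)>0$ for every $s$, and $V(a,a'|s,s)=0$ whenever $a\neq a'$). By construction, $\calP'$ is satisfiable iff $\calG(\calP')$ admits a perfect classical strategy, and, by definition, $\calP'$ is quantumly satisfiable iff $\calG(\calP')$ admits a perfect quantum strategy.

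Finally, invoke Theorem~\ref{thm:conicformperfectstrategies} with $\calK=\CP$ (respectively $\calK=\CPSD$): the existence of a perfect classical (respectively quantum) strategy for the synchronous game $\calG(\calP')$ is equivalent to the feasibility of an explicit linear conic program over $\CP$ (respectively $\CPSD$), whose variable is a matrix in $\calK^{|\mcv \times \mathcal{D}|}$ subject to the block-sum normalization $\langle J_{s,s'},X\rangle=1$, the synchronicity conditions $X[(s,a),(s,a')]=0$ for $a\neq a'$, and the constraint-enforcement conditions $X[(s,a),(s',a')]=0$ for every forbidden answer pair read off from the binary constraints of $\calP'$. Composing the two equivalences gives the claim.

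The only non-routine step is checking that the CSP-to-binary-CSP reduction preserves \emph{quantum} satisfiability; this is essentially built into the extended definition, but one should verify that a perfect quantum strategy for $\calG(\calP')$ indeed yields measurement operators assembling into a perfect quantum strategy for the original CSP game (and conversely), using the block-diagonal structure of the reduction to identify the measurement operators for the original variables inside those indexed by the new variables $c_i$. Beyond this bookkeeping, the corollary is an immediate specialization of Theorem~\ref{thm:conicformperfectstrategies}.
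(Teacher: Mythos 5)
Your proposal is correct and follows exactly the paper's route: binarize the CSP via the standard reduction, observe that the associated game $\calG(\calP')$ is synchronous, take quantum satisfiability of a general CSP to be \emph{defined} through the binarized instance, and then apply Theorem~\ref{thm:conicformperfectstrategies} with $\calK=\CP$ (resp.\ $\CPSD$). Your closing caveat about verifying that the reduction preserves quantum satisfiability is moot for the same reason you note — the paper makes this true by definition — so nothing further is needed.
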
 
 
\end{document}